\documentclass{amsart}
\usepackage{amssymb,amsmath,amsrefs}
\usepackage[colorlinks=true, linkcolor=red, citecolor=blue, urlcolor=blue]{hyperref}
\usepackage[dvips]{graphicx}
\usepackage{subcaption}
\usepackage{xcolor}
\definecolor{OrangeRed}{rgb}{1,0.27,0}
\usepackage{enumerate}
\usepackage{soul}

\definecolor{verdemusgo}{rgb}{0.33,0.42,0.18}

\usepackage{tikz}
\usetikzlibrary{arrows.meta, decorations.markings, patterns, positioning}

\definecolor{boxblue}{HTML}{D0E0E3}
\definecolor{boxyellow}{HTML}{FFF2CC}
\definecolor{arrowblue}{HTML}{4A86E8}
\definecolor{dashblue}{HTML}{1A6E8C}
\tikzset{
	mid arrow/.style={
		decoration={
			markings,
			mark=at position #1 with {\arrow{Stealth[scale=1.2]}}
		},
		postaction={decorate}
	},
	mid arrow/.default=0.5
}

\usepackage{tabularx}
\usepackage{soul,color}
\usepackage{marginnote}
\usepackage[font=scriptsize]{caption}
\usepackage{mwe}
\theoremstyle{plain}

\newtheorem{mainthm}{Theorem}

\newtheorem{mainclly}[mainthm]{Corollary}
\newtheorem*{conj*}{Conjecture}
\newtheorem*{cor*}{Corollary}
\newtheorem{theorem}{Theorem}[section]

\newtheorem{proposition}[theorem]{Proposition}
\newtheorem{corollary}[theorem]{Corollary}
\newtheorem{lemma}[theorem]{Lemma}

\newtheorem{question}{Question}

\newtheorem{claim}{Claim}

\theoremstyle{definition}
\newtheorem*{def*}{Definition}
\newtheorem{remark}[theorem]{Remark}
\newtheorem{rmk}[theorem]{Remark}
\newtheorem{example}[theorem]{Example}

\newtheorem{definition}[theorem]{Definition}

\newcommand{\SP}{{\mathcal P}}

\renewcommand{\epsilon}{\varepsilon}

\newcommand{\ka} {\kappa}

\DeclareMathOperator{\Sing}{Sing}

\newcommand{\Z}{\mathbb{Z}}

\newcommand{\R}{\mathbb{R}}
\newcommand{\eps}{\varepsilon}

\newcommand{\interior}{\operatorname{int}}

\newcommand{\sing}{\operatorname{Sing}}

\newcommand{\diam}{\operatorname{diam}}		
\makeatletter
\newcommand{\tpitchfork}{%
  \vbox{
    \baselineskip\z@skip
    \lineskip-.52ex
    \lineskiplimit\maxdimen
    \m@th
    \ialign{##\crcr\hidewidth\smash{$-$}\hidewidth\crcr$\pitchfork$\crcr}
  }%
}
\makeatother

\title[Shadowing in the presence of singularities]{ Shadowing in the presence of singularities: oriented versus standard shadowing, entropy and the structure of recurrent sets}

\author{Sakshi Jain}
\address[Jain]{School of Mathematics and Physics\\
   University of Queensland\\
	St Lucia, QLD 4072\\
	Australia}
\email{sakshi.jain@uq.edu.au}

\author{Piotr Oprocha}
\address[Oprocha]{University of Ostrava,
	IRAFM,
	30. dubna 22, 70103 Ostrava,
	Czech Republic}
\email{piotr.oprocha@osu.cz}

\author{Elias Rego}
\address[Rego]{Faculty of Applied Mathematics, AGH University of Krakow, Krakow, Poland.}
\email{rego@agh.edu.pl}

\subjclass[2020]{Primary: 37B65, 37C10; Secondary: 37B40.}

\keywords{Singular Flows, Shadowing, Entropy, Chain Recurrent Classes}

\begin{document}

\begin{abstract}

We study two shadowing properties for flows that differ in the allowed
reparametrizations of time: oriented shadowing permits arbitrary
increasing reparametrizations, whereas standard shadowing requires
their distortion to be uniformly close to one. We prove that these
notions are distinct already for $C^\infty$ flows on every closed
oriented surface. Moreover, such examples are $C^0$-dense among
$C^1$ flows with a singularity and consequently, on closed oriented
surfaces with non-zero Euler characteristic, they are dense among all
$C^1$ flows. We then relate local standard shadowing to recurrence and
entropy. A non-trivial chain-transitive set with local standard
shadowing forces positive topological entropy unless it is an
irreducible almost heteroclinic set. Consequently, for a zero-entropy
flow with standard shadowing, every non-trivial chain-recurrent class
has this form, and every non-singular one is minimal. For surface
flows, we further prove that oriented shadowing together with finitely
many singularities forces every chain-recurrent class to be minimal.
\end{abstract}

\maketitle

\section{Introduction}

The shadowing property asks whether a trajectory assembled with small
errors can be followed, uniformly in time, by an exact orbit. It plays a
central role in the theoretical framework of dynamical systems. As a basic
feature of hyperbolic dynamics, shadowing was used in~\cite{B71} to
study the structure of Axiom~A diffeomorphisms. Moreover, it serves as a
crucial tool for proving the stability of hyperbolic sets and,
conversely, lies at the core of structural stability: as shown by
Pilyugin~\cite{Py}, every structurally stable system must exhibit the
shadowing property. The deep relationship between shadowing and symbolic
dynamics was established by Good and Meddaugh~\cite{GM20}, and its
versatility is further illustrated by appearances in other
contexts~\cite{Bm21,ACT26,CS99}. Beyond its theoretical
importance, shadowing is especially valuable in applied sciences. Since
computer simulations inherently involve rounding errors, it provides a
rigorous framework for interpreting approximate trajectories produced by
numerical procedures; its relevance to numerical analysis is well
documented~\cites{HYG87,TN25,CW21,T09}.

For flows, the notion of shadowing becomes considerably more nuanced than for its discrete-time counterpart. An exact orbit cannot in general be required to follow a
pseudo-orbit with the same time parametrization. Instead, one allows
a reparametrization of time along the tracing orbit.
Let $\mathrm{Rep}$ denote the
set of increasing homeomorphisms $h\colon\mathbb R\to\mathbb R$ with
$h(0)=0$, and let
\[
\mathrm{Rep}(\varepsilon)=
\left\{h\in\mathrm{Rep}:
\left|\frac{h(t)-h(s)}{t-s}-1\right|\leq\varepsilon
\text{ for all }t\neq s\right\}.
\]
In oriented shadowing the time reparametrization may be any element of
$\mathrm{Rep}$, whereas standard shadowing requires it to belong to
$\mathrm{Rep}(\varepsilon)$. Thus standard shadowing controls
both the geometry of the tracing orbit and the distortion of elapsed
time while oriented shadowing retains only the former control (precise
definitions are given in Section~\ref{sec:Prelim}).

The distinction becomes significant in the presence of singularities.
Near a singularity, geometrically close orbit segments may spend very
different amounts of time in the same region. This phenomenon already
appears in Komuro's proof that geometric Lorenz attractors do not have
the shadowing property \cite{K85}. Subsequent results exclude shadowing
for singular-hyperbolic sets containing a singularity and for
codimension-one sectional-Anosov flows \cites{WW,ALS}, as well as for
certain recurrent configurations with an attached hyperbolic
singularity of stable or unstable index one \cite{ALRS}. These
obstructions are not universal: Murakami recently constructed on
$\mathbb{S}^4$ a non-trivial shadowable chain-recurrent set with an attached
hyperbolic singularity \cite{Mu2}. Together, these results show that
shadowing in the singular setting depends on more than the mere
coexistence of singularities and non-trivial recurrence.

Komuro introduced the two forms of shadowing above and proved that
they coincide for flows without singularities \cite{Ko}. He also
asked whether the two notions are distinct in general. Tikhomirov
answered this question by constructing a $C^1$ vector field on a
four-dimensional closed manifold with oriented shadowing but without
standard shadowing \cite{Ty}. More recently, Murakami proved the
equivalence for topological flows with finitely many singularities,
each Lyapunov stable or backward Lyapunov stable \cite{Mu23}. On a
closed surface he obtained the equivalence when the nonwandering set
consists of finitely many critical elements \cite{Mu1}. The known
counterexample was therefore genuinely higher-dimensional, while the
available surface results supported the possibility that the two
notions might coincide for arbitrary surface flows. Our first result
shows that they do not, demonstrating that, even in the topological restriction provided by the low dimensional scenario, the equivalence cannot be achieved.

\begin{mainthm}\label{thm:denseflows}
Let $M$ be a compact, boundaryless and oriented Riemannian surface.
 Then $M$ supports a $C^\infty$ flow with the oriented
shadowing property but without the standard shadowing property.
Moreover, the set of all such $C^\infty$ flows is $C^0$-dense in the
space of $C^1$ flows on $M$ which have a singularity, where the
$C^0$ topology is induced by their continuous generating vector
fields.
\end{mainthm}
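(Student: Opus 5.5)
The plan is to build an explicit local model, a \emph{gadget}, that has oriented shadowing but fails standard shadowing. We then splice it into flows that already have shadowing. The mechanism is the one Komuro exploited: near a singularity, geometrically close orbit segments can take wildly different times. In the model, an attracting closed curve $\Gamma$ consists of one degenerate (non-hyperbolic) singularity $\sigma$ together with one regular orbit $\gamma$ whose $\alpha$- and $\omega$-limits are both $\sigma$. The flow is tangent to $\Gamma$ and strictly contracting toward $\Gamma$ from both sides, with $\sigma$ a saddle-node-like point of the restricted flow on $\Gamma$. The nonwandering set is then not a finite union of critical elements, so the equivalence result of \cite{Mu1} does not apply. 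In a collar annulus $A$ around $\Gamma$ we write the field in coordinates $(\theta,r)$ as $X=\phi(\theta)\,\partial_\theta - r\,\partial_r$, with $\phi\ge 0$ smooth, vanishing only at $\theta=0$ and flat there. Outside $A$ the field is chosen Morse--Smale. On a sphere we take $\Gamma$ to be an attracting cycle with a source in each complementary disk. On a surface of higher genus we place the gadget inside a disk that replaces a sink of a gradient-like Morse--Smale field.

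The first step is \emph{oriented shadowing}. Outside $A$ we use Pilyugin-type shadowing for Morse--Smale flows \cite{Py}. A $\delta$-pseudo-orbit that enters $A$ can never leave it, since $A$ is a trapping region. Inside $A$ it follows the circle direction, except that each jump of size $\delta$ near $\sigma$ may skip an arbitrarily long passage time. The candidate shadowing orbit is the true orbit through a point with small $r>0$, chosen by a diagonal (nested compact sets) argument. Its $\theta$-coordinate winds around $\Gamma$ monotonically, just as the pseudo-orbit does. Matching the successive windings gives an increasing homeomorphism $h\in\mathrm{Rep}$ realizing $\varepsilon$-closeness. The matching requires the true orbit to stay near $\sigma$ exactly as long as the pseudo-orbit does up to reparametrization, which is harmless because oriented shadowing allows arbitrary time distortion. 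There is one subtle case: pseudo-orbits that settle onto $\sigma$ or onto $\gamma$ itself. These must be shadowed either by $\sigma$ or by points whose orbit accumulates on $\Gamma$. The flatness of $\phi$ at $0$ is what lets the true orbits linger near $\sigma$ for any prescribed (large) time.

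The second step is the \emph{failure of standard shadowing}. Take the periodic $\delta$-pseudo-orbit that runs along $\gamma$ for its time outside a $\delta$-neighbourhood $U_\delta$ of $\sigma$, about $T_\delta$, and then jumps across $U_\delta$ at once. Any true orbit that $\varepsilon$-shadows it winds around $\Gamma$ with $r\neq 0$ or lies on $\Gamma$. In both cases it spends in $U_\varepsilon$, per revolution, a time that tends to infinity as its $r$-coordinate shrinks; orbits lying on $\Gamma$ fail outright. A true orbit with $r\to 0$ spends a time per revolution that grows without bound compared with $T_\delta$. Hence no $h\in\mathrm{Rep}(\varepsilon)$ with $\varepsilon<1$ can match the revolutions. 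The estimate has to be uniform, and this is where a specific choice such as $\phi(\theta)=e^{-1/\theta^2}$ helps.

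The third step is \emph{density}. Let $X$ be a $C^1$ field with a singularity $p$. First we $C^0$-approximate $X$ by a $C^\infty$ field $Y$ that is gradient-like Morse--Smale away from a small disk $D$ around $p$, keeping the fact that the index near $p$ is accounted for. Inside $D$ we insert the gadget of the first step, after possibly creating an auxiliary sink via a $C^0$-small saddle-node modification near $p$. Because the perturbation is only $C^0$-small, the singularity can be reshaped freely inside $D$. I expect the main obstacle to be the first step, namely proving oriented shadowing globally and uniformly. The delicate points are pseudo-orbits that alternate between nearly stopping at $\sigma$ and circulating, and the need to glue the Morse--Smale shadowing with the gadget shadowing across $\partial A$. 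My first approach would be a Lyapunov-function/filtration argument: I would show that each pseudo-orbit eventually becomes trapped in $A$ after passing through finitely many Morse--Smale pieces, and handle the part inside $A$ by the monotone-angle construction.
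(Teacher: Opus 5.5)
\textbf{There is a genuine gap, and it lies in the gadget itself.} The flow you describe does not have the oriented shadowing property. So step one is false, and step two does not separate the two notions.

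\emph{The formula does not give the intended picture.} In $X=\phi(\theta)\,\partial_\theta-r\,\partial_r$ the equation $\dot\theta=\phi(\theta)$ is decoupled, so the ray $\{\theta=0\}$ is invariant; it is the stable manifold of $\sigma$. Every orbit in $A$ sweeps $\theta$ from $0^+$ to $2\pi^-$ exactly once and converges to $\sigma$. Nothing winds around $\Gamma$.

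\emph{Repairing the formula does not help.} Take for instance $X=\rho\,(\partial_\theta-r\,\partial_r)$ with $\rho\ge0$ vanishing only at $\sigma$, so that nearby orbits spiral onto $\Gamma$ and pass $\sigma$ on every turn. Now consider the periodic pseudo-orbit you use in step two: it runs along $\gamma$, jumps across $U_\delta$, and repeats in both time directions.
\begin{itemize}
\item Any $h\in\mathrm{Rep}$ is a homeomorphism of $\mathbb R$. Hence a tracing orbit must stay $\varepsilon$-close to $\Gamma$ for all time and make infinitely many turns in \emph{negative} time.
\item In either version $\Gamma$ is attracting, so every orbit through a point of $A\setminus\Gamma$ leaves the $\varepsilon$-neighbourhood of $\Gamma$ in backward time.
\item The remaining candidates $\sigma$ and $\gamma$ make at most one turn.
\end{itemize}
So this pseudo-orbit is not $\varepsilon$-traced even with an arbitrary $h\in\mathrm{Rep}$. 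Your case analysis in step one (pseudo-orbits settling on $\sigma$ or $\gamma$, ``a true orbit with small $r>0$'') misses exactly these bi-infinite winding pseudo-orbits.

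\emph{The structural reason.} $\Gamma$ lies in the non-minimal chain-recurrent class $H(\sigma)$ of a surface flow with finitely many singularities. Theorem~\ref{thm:surfacecharac} shows this is incompatible with oriented shadowing. Hence no gadget that places finitely many singularities on a recurrent cycle can work.

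\textbf{The missing idea} is to leave the recurrent dynamics untouched and put infinitely many singularities (necessarily infinitely many, by Theorem~\ref{thm:surfacecharac}) on wandering orbits.
\begin{itemize}
\item The paper starts from the Morse--Smale disc flow with a sink at $0$ and a repelling boundary circle.
\item It multiplies the field by a $C^\infty$ function $\rho\ge0$. The zero set $S$ is a countable compact set on a transversal, inside two disjoint flow boxes crossed only by wandering orbits, accumulating at $x_0$ and $z_0=P(x_0)$.
\item Because the recurrent part is unchanged, every accurate pseudo-orbit visits the singular neighbourhoods in an order realized by a genuine orbit or singular branch. This is what makes oriented shadowing provable (Claims~\ref{cl:box-passage}--\ref{cl:oriented-corrected}).
\item The heights $\widehat h_n$ are chosen so that $\widehat T_{n+1}\ge 2^n\widehat T_n$. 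A long wait at $x_n$ then confines the tracing orbit to orbits whose second-box crossing time is either at most about $\widehat T_n$ or at least $2^{n-1}\widehat T_n$.
\item Next comes a wait of length $T'=4(\widehat T_n+C)$ at $y_n$. This forces a residence time in the second box incompatible with both alternatives under any $h\in\mathrm{Rep}(\varepsilon_0)$ (Claim~\ref{cl:standard-corrected}).
\end{itemize}

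Your density step (insert a local model near a zero, then apply Morse--Smale approximation relative to it) is close in spirit to the paper's. It is moot, however, until the local model is replaced.
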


This result shows that the oriented and standard shadowing properties are genuinely distinct, even on compact surfaces. In particular, the assumption of finitely many singularities cannot be omitted. Furthermore, the examples are abundant and may be chosen to have the highest regularity. 
The construction underlying Theorem~\ref{thm:denseflows} starts from
a simple Morse--Smale surface flow and changes only its speed. A
non-negative $C^\infty$ multiplier introduces two carefully arranged
sequences of singularities accumulating inside a pair of flow boxes.
The multiplier is chosen so that nearby regular orbits retain the same
geometric itinerary but acquire crossing times separated by
arbitrarily large factors. An arbitrary increasing reparametrization
can absorb these differences and trace the itinerary. A
reparametrization in $\mathrm{Rep}(\varepsilon)$ cannot do so.
The main point of the construction is to obtain this separation while
preserving smoothness at the accumulation points and retaining enough
control of all pseudo-orbits to prove oriented shadowing.

The density assertion shows that this phenomenon is not confined to
a single specially chosen flow. It can be inserted by an arbitrarily
small $C^0$ perturbation near any zero of a $C^1$ vector field. There
is also an immediate geometric consequence. If $\chi(M)\neq0$, then
the Poincar\'e--Hopf theorem forces every $C^1$ vector field on $M$ to
have a zero. Hence the $C^\infty$ examples of
Theorem~\ref{thm:denseflows} are $C^0$-dense among all $C^1$ flows on
every closed oriented surface with non-zero Euler characteristic. In
particular, this holds on $\mathbb{S}^2$ and on every closed oriented surface
of genus at least two. We record this consequence as
Corollary~\ref{cor:dense-all-surface} below.

The second theme of the paper is the relation between shadowing,
recurrence and entropy. In discrete time, shadowing places strong
restrictions on zero-entropy recurrence. For a continuous map with
shadowing, the presence of either a non-minimal recurrent point or a
sensitive minimal subsystem forces positive topological entropy; at
zero entropy, the restriction to the nonwandering set is an
equicontinuous homeomorphism \cite{Moo11}. In particular, a
nonwandering system with shadowing is either equicontinuous or has
positive entropy \cite{LO13}. The mechanism behind these results is
that shadowing converts sufficiently distinct recurrent patterns into
symbolic dynamics; see also \cite{MO13}.

There is no reason to expect the equicontinuous alternative to pass
unchanged to flows. Equicontinuity depends on the parametrization of
orbits, and a change of speed may alter it while leaving the oriented
orbit foliation unchanged. This already occurs within the shadowing
class. Indeed, let $f$ be an aperiodic odometer. It is equicontinuous
and, since its phase space is totally disconnected, it has the
shadowing property \cite[Proposition~2.4]{LO13}. Consequently, its
constant-roof suspension (see Definition~\ref{Def-Susp}) is an equicontinuous regular flow with
oriented shadowing. More generally, suspension and
orientation-preserving changes of speed preserve the latter property
\cite{Thomas82}. Now replace the constant roof by a positive continuous
function which is not cohomologous to a constant. The resulting flow
is no longer equicontinuous \cite[Corollary~4.11]{KS87}, but it still
has oriented shadowing and hence, being regular, also standard
shadowing \cite{Ko}. Its entropy remains zero. Thus even among regular
flows with standard shadowing, zero entropy does not force
equicontinuity. More drastic changes of behavior are also possible:
Liouvillian linear flows on tori admit smooth weak-mixing time changes
\cite{Fayad02}. We therefore seek restrictions on chain-recurrent
classes rather than a literal equicontinuity--entropy alternative.

Both entropy production and the shadowing mechanism used below are
local. For a $\delta$-$T$-pseudo-orbit
$P=(x_i,t_i)_{i\in\mathbb Z}$, put
\[
\mathcal P(P)=\bigcup_{i\in\mathbb Z}\phi_{[0,t_i]}(x_i).
\]

\begin{definition}
A compact set $\Lambda$ has the \emph{local oriented shadowing
property} if, for every $\varepsilon>0$ and $T>0$, there are a
neighborhood $U$ of $\Lambda$ and $\delta>0$ such that every
$\delta$-$T$-pseudo-orbit $P$ satisfying $\mathcal P(P)\subset U$ is
$\varepsilon$-traced by some point of $X$. Requiring instead that
every such pseudo-orbit be strongly $\varepsilon$-traced defines the
\emph{local standard shadowing property}.
\end{definition}

The two forms of local shadowing are linked away from singularities.
We prove in Section~\ref{sec: regularity} that oriented shadowing can
be upgraded to standard shadowing for pseudo-orbits staying a fixed
positive distance from the singular set. Consequently, every compact
invariant set without singularities in a flow with oriented shadowing
has the local standard shadowing property
(Corollary~\ref{cor:locshad}).

To describe the remaining obstruction, call an orbit
$\mathcal O(x)$ a \emph{homoclinic loop} if, for some
$\sigma\in\operatorname{Sing}(\phi)$,
$\overline{\mathcal O(x)}=\mathcal O(x)\cup\{\sigma\}$. It is a
\emph{heteroclinic connection} if it is not a homoclinic loop and
$\alpha(x)\cup\omega(x)\subset\operatorname{Sing}(\phi)$. A compact
invariant set is \emph{non-trivial} if it is not a single orbit and
is not contained in $\operatorname{Sing}(\phi)$.

\begin{definition}
A compact, invariant and chain-transitive set $\Lambda$ is an
\emph{almost heteroclinic set} if every regular point in $\Lambda$
whose orbit is not dense in $\Lambda$ lies on a heteroclinic
connection. It is \emph{irreducible} if it contains no proper
non-trivial almost heteroclinic set.
\end{definition}

For a non-singular almost heteroclinic set, every orbit must be dense,
so the set is minimal. In the presence of singularities, however, the
definition also accommodates homoclinic loops and more complicated
singularly minimal dynamics. Our second main result identifies the
failure of this structure as a source of positive entropy.

\begin{mainthm}\label{thm:positive entropy}
Let $\phi$ be a continuous flow and $\Lambda\subset X$ be a compact, invariant and non-trivial chain-transitive subset. If $\Lambda$ is not an irreducible almost heteroclinic set and has the local standard shadowing property, then $\phi$ has positive topological entropy.
\end{mainthm}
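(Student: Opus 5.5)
The strategy is the flow analogue of the discrete argument of \cite{Moo11,LO13}. We show that if $\Lambda$ fails to be an irreducible almost heteroclinic set, it contains two regular orbit segments that are genuinely distinct. Local standard shadowing lets us concatenate these segments freely, which yields a horseshoe-like family of separated orbits and hence positive entropy.

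\textbf{Step 1: geometric dichotomy.} We first show that, when $\Lambda$ is not an irreducible almost heteroclinic set, it contains either
\begin{enumerate}[(a)]
\item a regular point $p$ whose orbit is not dense in $\Lambda$ and which does not lie on a heteroclinic connection, or
\item a proper non-trivial almost heteroclinic subset $\Lambda'\subsetneq\Lambda$.
\end{enumerate}
Case (a) is exactly the failure of the almost heteroclinic property. Case (b) is the failure of irreducibility.

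In either case we want a regular point $q\in\Lambda$ and a compact invariant set $K\subsetneq\Lambda$ with $q\notin K$, such that $K$ contains a regular point $z$ and chain-transitivity lets us pass between neighbourhoods of $q$ and of $z$.
\begin{itemize}
\item In case (a), take $K=\overline{\mathcal O(p)}$ and $q$ a regular point of $\Lambda\setminus K$. Since $p$ is regular and not on a heteroclinic connection, either $\alpha(p)$ or $\omega(p)$ contains a regular point, or $\mathcal O(p)$ is a homoclinic loop. In all these situations $K$ carries regular recurrence of its own.
\item In case (b), take $K=\Lambda'$ and pick $q\in\Lambda\setminus\Lambda'$ regular. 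Such a $q$ exists because $\Lambda$ is non-trivial and $\Lambda'$ is proper.
\end{itemize}

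\textbf{Step 2: two distinguishable loops.} Fix a small $\varepsilon$, smaller than $d(q,K)/4$ and than the distance from $q$ and $z$ to $\operatorname{Sing}(\phi)$. Choose $T$, and let $\delta$ and $U$ be given by local standard shadowing. Using chain-transitivity inside $\Lambda$, build two $\delta$-$T$-chains $A$ and $B$ from $z$ back to $z$ inside $\Lambda\subset U$. The chain $A$ passes through $q$. The chain $B$ stays close to $K$, using the recurrence of $K$ through regular points.

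Pad the chains so that they have fixed durations $\tau_A$ and $\tau_B$, with the ratio $\tau_A/\tau_B$ prescribed. This is done by lengthening along orbits inside $\Lambda$. Then, for each word $w\in\{A,B\}^n$, concatenation gives a $\delta$-$T$-pseudo-orbit $P_w$ with $\mathcal P(P_w)\subset U$. Let $x_w$ be a point that strongly $\varepsilon$-traces $P_w$.

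\textbf{Step 3: separation and entropy.} The key point is that the reparametrization lies in $\mathrm{Rep}(\varepsilon)$. Hence the $k$-th block of $w$ is traced during a time window that differs from the nominal window by at most $\varepsilon$ times its length. Choose block durations, say $\tau_A=\tau_B=\tau$, long enough and $\varepsilon$ small enough that these windows cannot drift into the neighbouring blocks.

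Now take two words that differ at block $k$, with $A$ in one word and $B$ in the other. Inside the $k$-th window, one tracing orbit comes within $\varepsilon$ of $q$, while the other remains within $\varepsilon$ of $K$ throughout. So the two tracing points are $(2\varepsilon, n\tau(1+\varepsilon))$-separated. This gives $2^n$ separated points, and therefore $h_{top}(\phi)\ge \log 2/(\tau(1+\varepsilon))>0$.

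\textbf{Main obstacle.} The delicate part is Step 1 combined with Step 2's requirement that $B$ stay near $K$ for a fixed time while avoiding $q$. This is where the excluded structure really enters. If $p$ lies on a heteroclinic connection, then $K$ consists of that orbit plus singularities. A chain staying near $K$ must then linger near singularities for uncontrolled times, so a block of fixed duration is impossible. Standard shadowing exactly forbids absorbing such time differences, and that is why heteroclinic connections are exempt from the theorem.

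The first thing I would try is to prove the following lemma: if $\mathcal O(p)$ is not a heteroclinic connection, then $\overline{\mathcal O(p)}$ contains a regular point $z$ returning near itself within bounded time. This should follow from $\omega(p)$ or $\alpha(p)$ meeting the regular set, or from the homoclinic loop returning through regular points.

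For case (b) I would argue by minimality of the counterexample. If $\Lambda'$ is itself reducible or not almost heteroclinic, we can descend to a smaller set; local shadowing passes to compact invariant subsets via the same neighbourhood $U$. Eventually we reach a set where case (a) applies, relative to the chain-transitive set at hand.
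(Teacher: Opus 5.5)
\textbf{The gap is in Step 3, and the argument does not go through as written.} For $h\in\mathrm{Rep}(\varepsilon)$ with $h(0)=0$ you only know $(1-\varepsilon)t\le h(t)\le(1+\varepsilon)t$. So the \emph{length} of the window in which block $k$ is traced is within $\varepsilon\tau$ of $\tau$. Its \emph{position}, however, can be off by $\varepsilon k\tau$ (take $h(t)=(1+\varepsilon)t$), which exceeds a whole block once $k>1/\varepsilon$.

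Lengthening $\tau$ does not help, since the drift is $\varepsilon k$ blocks whatever $\tau$ is. Nor can $\varepsilon$ shrink with $n$: $\delta$, the chains and hence $\tau$ are all chosen after $\varepsilon$, so your bound $\log 2/((1+\varepsilon)\tau)$ would degenerate.

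Consequently, compare $x_w$ and $x_v$ at a common real time $h_w(t)$. At that moment $x_v$ is tracing $P_v$ at nominal time $h_v^{-1}(h_w(t))$. That time can be several blocks away from $t$, possibly inside a different $A$-block of $v$. So ``one orbit is near $q$, the other near $K$'' does not follow. With unrestricted words in $\{A,B\}^n$ this cannot be patched: long runs of $B$ contain no landmark that would force the two tracings back into alignment.

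\textbf{The missing idea is a self-synchronizing code, which is what Lemma~\ref{lemma:posentr} supplies.}
\begin{itemize}
\item One compares the two tracings through the relative reparametrization $h_v^{-1}\circ h_w\in\mathrm{Rep}(\beta)$, where $\beta=2\varepsilon/(1-\varepsilon)$.
\item Every codeword begins with the excursion block. Its point $\phi_{t_0}(x)$ is $2\eta$-far from the entire segment near the proper subset, so it is recognised locally as a marker.
\item The symbol is carried by the number of blocks near the proper subset before the next marker: $0\,1^{100}$ versus $0\,1^{150}$.
\item Only one inter-marker gap is measured at a time, so the error is $\beta$ times that gap and never accumulates.
\item Induction from the first marker shows that consecutive markers of $w$ go to consecutive markers of $v$. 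The gap lengths then force $w_j=v_j$.
\end{itemize}
This also makes your padding to equal or prescribed durations unnecessary: the paper only needs $T_x<10T_y$.

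\textbf{Smaller points.}
\begin{itemize}
\item In case (a), $\overline{\mathcal O(p)}$ need not be chain transitive (for example, $p$ running from a repelling to an attracting periodic orbit). So a closed chain at $z$ staying near $K$ is not guaranteed. Use $\alpha(p)$ or $\omega(p)$ instead, which are internally chain transitive and contain a regular point, or the loop itself in the homoclinic case.
\item In case (b), $\Lambda'$ is chain transitive by definition, so no descent is needed. Also, $q$ need not be regular.
\item The heuristic in your last paragraph is misdirected, and bounded-time returns are never needed. A pseudo-orbit can wait at a singularity for any prescribed time. Standard shadowing forces the tracing orbit to reproduce that time up to a factor $1\pm\varepsilon$, which helps distinguish blocks rather than obstructing them.
\end{itemize}
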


Theorem~\ref{thm:positive entropy} requires standard shadowing only
near $\Lambda$. Its proof turns the failure of irreducibility into
two long almost-returning orbit blocks: one remains close to a proper
chain-transitive subset, while the other makes a uniformly separated
excursion. Arbitrary binary concatenations of these blocks are local
pseudo-orbits. The bounded time distortion in standard shadowing
makes the two types of block distinguishable after tracing and yields
exponentially many separated orbits.

The exceptional alternative should not be read as synonymous with
zero entropy. Singularly minimal sets---those whose proper compact
invariant subsets lie in the singular set---are irreducible almost
heteroclinic sets, and singular suspensions may have zero, positive or
infinite entropy \cites{SYZ09,SZ11}. Thus the theorem gives a
one-sided entropy criterion rather than claiming a complete
dichotomy. Applied to chain-recurrent classes, it yields the following
structural consequence.

\begin{mainthm}\label{thm:standard}
Let $\phi$ be a continuous flow with the standard shadowing property. If $\phi$ has zero topological entropy, then every non-trivial chain-recurrent class of $\phi$ is an irreducible almost heteroclinic set. Moreover, if a non-trivial class is non-singular, then it is minimal. In particular, if $\phi$ is a regular flow, its chain-recurrent set is a disjoint union of minimal sets.
\end{mainthm}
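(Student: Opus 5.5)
The plan is to deduce Theorem~\ref{thm:standard} from Theorem~\ref{thm:positive entropy}. We need two inputs: that chain-recurrent classes are compact, invariant and chain-transitive, and that global standard shadowing implies local standard shadowing at every compact set.

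\textbf{Step 1: the main implication.} Let $\Lambda$ be a non-trivial chain-recurrent class. It is compact, invariant and chain-transitive, since chain-recurrent classes are equivalence classes of the chain relation restricted to the chain-recurrent set. Because $\phi$ has standard shadowing on all of $X$, for given $\varepsilon,T$ we may take $U=X$ and the global $\delta$. So $\Lambda$ has the local standard shadowing property. If $\Lambda$ were not an irreducible almost heteroclinic set, Theorem~\ref{thm:positive entropy} would give $h_{top}(\phi)>0$, contradicting the hypothesis. Hence every non-trivial class is an irreducible almost heteroclinic set.

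\textbf{Step 2: non-singular classes.} Suppose $\Lambda$ is non-trivial and contains no singularity. A heteroclinic connection through a point $x\in\Lambda$ requires $\alpha(x)\cup\omega(x)\subset\operatorname{Sing}(\phi)$. Since $\Lambda$ is compact and invariant, $\alpha(x),\omega(x)\subset\Lambda$, and these limit sets are nonempty. So no point of $\Lambda$ lies on a heteroclinic connection. By the definition of almost heteroclinic set, every point of $\Lambda$ (all regular) has dense orbit in $\Lambda$. A compact invariant set in which every orbit is dense is minimal: any closed invariant nonempty subset contains an orbit closure, which equals $\Lambda$.

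\textbf{Step 3: regular flows.} If $\phi$ is regular, every class is non-singular. Non-trivial classes are minimal by Step~2. A trivial class is a single orbit and is non-singular, so it lies in the chain-recurrent set. I must check that such an orbit is closed. A single chain-recurrent orbit that forms a whole class is compact, being a closed class, so its orbit closure equals the orbit. Hence it is a periodic orbit (a compact regular orbit) and is minimal. Distinct classes are disjoint, which gives the disjoint-union statement.

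\textbf{Main obstacle.} The only delicate point is the trivial-class case in Step~3. One must make sure that a class which is a single orbit is really a closed orbit, and that the class is not merely contained in an orbit closure. Chain-recurrent classes are closed, and a compact set which is a single regular orbit must be periodic, since a non-closed orbit's closure contains limit points off the orbit. I would verify this using closedness of chain classes. Everything else reduces directly to Theorem~\ref{thm:positive entropy}.
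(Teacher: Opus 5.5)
Your proposal is correct and follows the paper's proof essentially step for step. You get local standard shadowing from the global property and apply Theorem~\ref{thm:positive entropy} for the first claim. You obtain minimality from the fact that $\alpha(x)\cup\omega(x)$ is nonempty and contained in a singularity-free class, and you note that trivial classes are compact single orbits.

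The detour through periodicity in Step~3 is harmless but unnecessary. A nonempty closed invariant subset of a single orbit contains a point of that orbit, hence the whole orbit, so such a class is minimal directly.
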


Every continuous flow on a closed surface has zero topological
entropy \cite{LSY}. Theorem~\ref{thm:standard} therefore gives an
unconditional restriction for surface flows with standard
shadowing.

\begin{mainclly}\label{cor:standard surface}
The chain-recurrent set of a surface flow with the standard shadowing property is a disjoint union of almost heteroclinic sets. Moreover, if the flow is regular, its chain-recurrent set is a disjoint union of minimal sets.
\end{mainclly}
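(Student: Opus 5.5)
The corollary follows from Theorem~\ref{thm:standard} once we know that surface flows have zero entropy. Let $\phi$ be a continuous flow on a closed surface $M$ with the standard shadowing property. By \cite{LSY}, $h_{top}(\phi)=0$, so the hypotheses of Theorem~\ref{thm:standard} hold. That theorem gives that every non-trivial chain-recurrent class of $\phi$ is an irreducible almost heteroclinic set, and in particular an almost heteroclinic set. When $\phi$ is regular, it also gives directly that the chain-recurrent set is a disjoint union of minimal sets, which is the second assertion.

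It remains to handle the trivial classes. Chain-recurrent classes are pairwise disjoint, compact, invariant and chain-transitive, and their union is the chain-recurrent set. So it suffices to show that each trivial class is an almost heteroclinic set. By definition, a trivial class is either contained in $\operatorname{Sing}(\phi)$ or is a single orbit.

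\begin{itemize}
\item If the class is contained in $\operatorname{Sing}(\phi)$, it has no regular points. The defining condition is therefore vacuous, and the class is almost heteroclinic.
\item If the class equals a single orbit $\mathcal O(x)$, then this orbit is compact and invariant. Hence it is either a singularity or a periodic orbit. In both cases every regular point of the class has orbit dense in the class, so the condition is again vacuous.
\end{itemize}

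The only point needing care is the compactness of a single-orbit class. A class is closed, so if it equals one orbit, that orbit is closed. A closed orbit on a compact manifold is either a point or periodic: for a non-periodic regular orbit, $\omega(x)$ would be a non-empty subset of the orbit, which forces recurrence, and a recurrent non-closed orbit has closure strictly larger than the orbit, contradicting closedness.

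With this settled, the chain-recurrent set is the disjoint union of its classes, each of which is an almost heteroclinic set, which proves the corollary. No step here is a genuine obstacle; all the work is carried by Theorem~\ref{thm:standard} together with the zero-entropy result of \cite{LSY}.
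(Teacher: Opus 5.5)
Your proof is correct and follows the paper's route: the corollary is Theorem~\ref{thm:standard} combined with the zero-entropy result of \cite{LSY}. Your explicit check that trivial classes are vacuously almost heteroclinic fills in a point the paper leaves implicit, and it is simpler than you make it: if a class equals a single orbit $\mathcal O(x)$, every regular point in it has orbit equal to the whole class, so you do not need the detour showing that the orbit is periodic or singular.
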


Theorem~\ref{thm:denseflows} shows that such an extension cannot be
obtained merely by identifying oriented with standard shadowing.
Nevertheless, the local upgrade away from singularities, combined
with a surface argument at the singular classes, gives a stronger
conclusion when the singular set is finite.

\begin{mainthm}\label{thm:surfacecharac}
Let $\phi$ be a continuous flow on a compact, boundaryless surface $M$ with the oriented shadowing property. If $\operatorname{Sing}(\phi)$ is finite, then every chain-recurrent class of $\phi$ is minimal.
\end{mainthm}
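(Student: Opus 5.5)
The plan is to split the chain-recurrent classes into non-singular and singular ones, and treat each case separately.

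\emph{Non-singular classes.} Let $C$ be a chain-recurrent class with $C\cap\operatorname{Sing}(\phi)=\emptyset$. Since $C$ is compact and misses the singular set, it lies at positive distance from $\operatorname{Sing}(\phi)$. Corollary~\ref{cor:locshad} then gives the local standard shadowing property for $C$, which upgrades the oriented shadowing hypothesis. Surface flows have zero entropy \cite{LSY}, so Theorem~\ref{thm:positive entropy} applies. It shows that a non-trivial such $C$ is an irreducible almost heteroclinic set. A non-singular almost heteroclinic set has every orbit dense, so $C$ is minimal. A trivial non-singular class is a single orbit, and chain-recurrence forces it to be a periodic orbit, which is also minimal.

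\emph{Singular classes.} Let $C$ contain a singularity $\sigma$. Since $\operatorname{Sing}(\phi)$ is finite, singular points are isolated, and I aim to show $C=\{\sigma\}$. Suppose instead that $C$ contains a regular point. Chain transitivity and finiteness of singularities then force a regular orbit $\mathcal O(x)\subset C$ whose $\alpha$- or $\omega$-limit meets $\operatorname{Sing}(\phi)$. I would argue as follows.
\begin{enumerate}
\item Take a small disc $D$ around $\sigma$ containing no other singularities. By planar (Poincar\'e--Bendixson) theory, the local dynamics in $D$ decomposes into finitely many hyperbolic, elliptic and parabolic sectors, or $\sigma$ is a center-type point surrounded by periodic orbits.
\item In the center-type case, nearby periodic orbits accumulate on $\sigma$. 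A chain from $\sigma$ to a point outside is impossible unless separatrices exist, which returns us to the sector case.
\item In the sector case, chain-recurrence of a regular point approaching $\sigma$ yields a cycle of separatrices, i.e.\ homoclinic loops or heteroclinic connections among finitely many singularities. Build a $\delta$-$T$-pseudo-orbit that follows this cycle once and then jumps, at $\sigma$, to a different separatrix or sector, or leaves the cycle toward its other side.
\item Oriented shadowing permits arbitrary time reparametrization, so the tracing orbit must stay $\varepsilon$-close geometrically. On a surface, a true orbit passing near a hyperbolic sector follows its boundary separatrices in the prescribed cyclic order. The Jordan curve theorem (or a transversal section argument) then shows that an orbit cannot approximate a cycle on one side and then cross to the other, nor can it enter and exit a sector through forbidden separatrices.
\end{enumerate}
This contradicts the existence of a regular orbit in $C$, so a singular class reduces to $\{\sigma\}$, a minimal set. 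Cases where the separatrix cycle bounds a region filled with periodic orbits or spirals are handled by the same transversal argument, using monotonicity of returns on a transversal segment on an orientable surface. In the non-orientable case, I would first pass to a local orientation cover near the cycle.

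The main obstacle is this singular case. The pseudo-orbit must be chosen so that \emph{every} geometrically close true orbit is ruled out, and this requires a careful case analysis of the finitely many sector types at each singularity in the cycle. I would first handle the case where all singularities are hyperbolic saddles, where the sidedness argument is cleanest. I would then reduce general isolated singularities to this case via the sector decomposition, treating elliptic sectors by noting that their homoclinic orbits themselves form cycles to which the same construction applies.
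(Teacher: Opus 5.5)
Your treatment of the non-singular classes is exactly the paper's argument: Corollary~\ref{cor:locshad}, zero entropy of surface flows, Theorem~\ref{thm:positive entropy}, and the fact that a heteroclinic connection needs singular limit sets.

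The singular case, however, has a genuine gap. In steps (2) and (3) you extract from chain recurrence what only shadowing can give. Two examples show the claims fail as stated.
\begin{itemize}
\item $\delta$-chains cross invariant curves freely. For the rotation of $\mathbb S^2$ about an axis, the chain class of each pole is all of $\mathbb S^2$.
\item Chain recurrence produces no separatrix cycle. Stop an irrational linear flow on $\mathbb T^2$ at one point $\sigma$. Then $H(\sigma)=\mathbb T^2$ and a regular orbit converges to $\sigma$, yet there is no homoclinic or heteroclinic connection at all.
\end{itemize}

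The paper gets the connection from oriented shadowing. It fixes a Jordan domain $U$ with $\overline U\cap\operatorname{Sing}(\phi)=\{\sigma\}$ and a regular point $p\in H(\sigma)\setminus\overline U$. It first proves a barrier lemma: no invariant Jordan curve in $U$ can have a point of $H(\sigma)$ inside and $p$ outside. Otherwise an orbit tracing a periodic chain through both points would meet, hence lie on, the curve. Combined with Poincar\'e--Bendixson, this excludes periodic orbits around $\sigma$ and homoclinic loops inside $U$ (the interior of such a loop lies in $H(\sigma)$). Next it traces a pseudo-orbit that rests at $\sigma$ in both tails and visits $p$. This yields $y_1$ with limit sets in $U$, and the same two tools force $\alpha(y_1)=\omega(y_1)=\{\sigma\}$ while $\mathcal O(y_1)$ leaves $U$.

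No sector classification is used, and none is available in general. An isolated zero of a continuous, or even $C^\infty$, vector field need not have a finite sectorial decomposition. So your step (1) and the planned reduction to hyperbolic saddles do not go through as stated.

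Even granting a homoclinic orbit, step (4) does not identify an obstruction that actually exists. One loop gives only one incoming and one outgoing tail at $\sigma$. A pseudo-orbit jumping at $\sigma$ from the first to the second is not separated from anything. ``Jump to a different separatrix'' presupposes further invariant arcs at $\sigma$, which you have not produced.

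The paper manufactures them. Tracing pseudo-orbits that run along the arc of $y_1$ twice and three times gives homoclinic orbits $y_2,y_3$. These are distinct from $y_1$ because they cross a small flow box twice and three times. Among the six resulting tails at $\sigma$, one picks an outgoing $A_i$ and an incoming $B_j$ such that, in a small disc $V$, the $\Theta$-curve $A_i\cup B_j\cup\partial V$ separates some outgoing tail $A_k$ from some incoming tail $B_s$. A pseudo-orbit arriving along $B_s$ and jumping at $\sigma$ onto $A_k$ then cannot be traced. The tracing orbit would have to cross $A_i\cup B_j$ while staying in $V$, i.e.\ it would be the orbit of $y_i$ or $y_j$. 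But each of these meets $V$ only in its two tails. Some construction of this kind is the missing idea in your singular case.
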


These structural conclusions are necessary restrictions, not
characterizations of shadowing: a minimal irrational linear flow on
$\mathbb T^2$ is the simplest counterexample to the corresponding
converses.

Section~\ref{sec:Prelim} fixes the notation and preliminary facts.
Section~\ref{sec:examplesufaces} constructs the surface examples and
proves Theorem~\ref{thm:denseflows}. Section~\ref{sec: regularity}
compares oriented and standard shadowing away from the singular set.
Section~\ref{sec:classification} develops the recurrence--entropy
argument and proves Theorems~\ref{thm:positive entropy},
\ref{thm:standard} and~\ref{thm:surfacecharac}.

\section{Preliminaries}\label{sec:Prelim}

In this section, we introduce the preliminary concepts and classical results that will be instrumental in proving our main results. We refer the reader to the treatises \cites{KH95,AL85,Bruin,Kurka} for comprehensive expositions of the material presented here.

Throughout this paper, $(X,d)$ denotes a compact metric space, and $\phi$ denotes a continuous flow on $X$. The orbit of a point $x\in X$ is the set
\[
O(x)=\{\phi_t(x):t\in\mathbb{R}\}.
\]
Given $A\subset\mathbb{R}$ and $S\subset X$, we also write
\[
\phi_A(x)=\{\phi_t(x):t\in A\}
\qquad\text{and}\qquad
\phi_A(S)=\{\phi_t(x):x\in S,\ t\in A\}.
\]

A point $x\in X$ is called a \emph{singularity} of $\phi$ if it is fixed by the flow, that is, $\phi_t(x)=x$, for every  $t\in\mathbb{R}$.
We denote by $Sing(\phi)$ the set of singularities of $\phi$. A point that is not a singularity is called a \emph{regular point}. A regular point is said to be \emph{periodic} if there exists $t>0$ such that $\phi_t(x)=x$. In this case, the period of $x$ is defined by
\[
\pi(x)=\inf\{t>0:\phi_t(x)=x\}.
\]
Let $Per(\phi)$ denote the set of periodic points of $\phi$. The \emph{critical set} of $\phi$ is then defined as
\(
Crit(\phi)=Per(\phi)\cup Sing(\phi).
\)

Fix $\delta,T>0$, and let $A\subset\mathbb{Z}$ be an interval of integers. A sequence
\(
(x_i,t_i)_{i\in A}
\)
is called a \emph{$\delta$-$T$-pseudo-orbit} if $t_i\geq T$ and
\[
d(\phi_{t_i}(x_i),x_{i+1})\leq\delta
\]
for every pair of consecutive indices $i,i+1\in A$. When $A=[a,b]\cap\mathbb{Z}$ is finite, we say that
\(
(x_i,t_i)_{i=a}^{b}
\) is a finite $\delta$-$T$-pseudo-orbit from $x_a$ to $x_b$. If $A=\mathbb{Z}$, we simply write
\((x_i,t_i)\bigr)_{i\in\mathbb{Z}}.
\)  We will also refer to finite $\eps$-$T$-pseudo-orbits as $\eps$-$T$-chains. By a straightforward modification of the sequence, we may always assume that every $\delta$-$T$-pseudo-orbit satisfies
\(
t_i\in[T,2T)
\)
for all $i$.

For a bi-infinite pseudo-orbit $(x_i,t_i)_{i\in\mathbb Z}$, set
\begin{equation}\label{def:S_i}
S_i=
\begin{cases}
\displaystyle\sum_{n=0}^{i-1}t_n, & i>0,\\[2mm]
0, & i=0,\\[1mm]
\displaystyle-\sum_{n=1}^{|i|}t_{-n}, & i<0.
\end{cases}
\end{equation}
We consider the following two classes of reparametrizations:
\[
\mathrm{Rep}
=
\{h\colon\mathbb R\to\mathbb R:
h\text{ is an increasing homeomorphism and }h(0)=0\},
\]
and, for $\varepsilon>0$,
\[
\mathrm{Rep}(\varepsilon)
=
\left\{h\in\mathrm{Rep}:
\left|\frac{h(t)-h(s)}{t-s}-1\right|\leq\varepsilon
\text{ for all }t\neq s\right\}.
\]
We say that $(x_i,t_i)_{i\in\mathbb Z}$ is
$\varepsilon$-traced by $x\in X$ if there exists
$h\in\mathrm{Rep}$ such that
\[
d\bigl(\phi_{t-S_i}(x_i),\phi_{h(t)}(x)\bigr)
\leq\varepsilon
\]
for every $i\in\mathbb Z$ and every
$t\in[S_i,S_{i+1}]$. If the same conclusion holds with
$h\in\mathrm{Rep}(\varepsilon)$, then the pseudo-orbit is said to be
\emph{strongly $\varepsilon$-traced} by $x$.

\begin{definition}\label{def:oriensh}
Let $\Lambda\subset X$ be compact and $\phi$-invariant. We say that
$\Lambda$ has the \emph{oriented shadowing property} if, for every
$\varepsilon>0$, there exists $\delta>0$ such that every
$\delta$-$1$-pseudo-orbit contained in $\Lambda$ is
$\varepsilon$-traced by some point of $X$.
\end{definition}

\begin{definition}\label{def:standsh}
Let $\Lambda\subset X$ be compact and $\phi$-invariant. We say that
$\Lambda$ has the \emph{standard shadowing property} if, for every
$\varepsilon>0$, there exists $\delta>0$ such that every
$\delta$-$1$-pseudo-orbit contained in $\Lambda$ is strongly
$\varepsilon$-traced by some point of $X$.
\end{definition}

\begin{remark}
The pseudo-orbit and tracing conventions above agree with those of
\cite{Ko}. They differ in form from the conventions used in
\cite{Ty}, but define the same oriented and standard shadowing
properties. Moreover, replacing $1$ by any fixed $T>0$ in
Definitions~\ref{def:oriensh} and~\ref{def:standsh} does not change
either property; see \cite{Ko}. When $\Lambda=X$, we simply say that
the flow has the corresponding shadowing property.
\end{remark}

Given $x,y\in X$, we say that $y$ is \emph{chain-attainable} from $x$, and write $x\sim y$, if for every $\delta>0$ there exist $N\in\mathbb{N}$ and a $\delta$-$1$-pseudo-orbit
\(
(x_i,t_i)_{i=0}^{N}
\)
from $x_0=x$ to $x_N=y$. We say that $x$ and $y$ are \emph{chain-related} if $x\sim y$ and $y\sim x$. The \emph{chain-recurrent set} of $\phi$ is defined by
\(
CR(\phi)=\{x\in X:x\sim x\}.
\)
It is well known that $\sim$ is an equivalence relation on $CR(\phi)$. Therefore, $CR(\phi)$ decomposes into equivalence classes, called the \emph{chain-recurrent classes}. For each $x\in CR(\phi)$, we denote by $H(x)$ the chain-recurrent class containing $x$, namely,
\[
H(x)=\{y\in CR(\phi):x\sim y\}.
\]
Chain-recurrent classes are compact and invariant. Moreover, every singularity belongs to $CR(\phi)$. A chain-recurrent class $H$ is called \emph{singular} if $H=H(\sigma)$ for some $\sigma\in Sing(\phi)$. A chain-recurrent class is called \emph{trivial} if it consists of a single orbit. A compact invariant set $K\subset X$ is said to be \emph{chain-transitive} if, for every pair of points $x,y\in K$, the point $y$ is chain-attainable from $x$ by means of $\delta$-$1$-pseudo-orbits entirely contained in $K$. It is well know that chain-recurrent classess are chain-transitive.

We now recall the definition of topological entropy. Let $f:X\to X$ be a homeomorphism, and let $K\subset X$ be a compact subset, not necessarily invariant. Given $n\in\mathbb{N}$ and $\varepsilon>0$, a subset $Q\subset K$ is called an \emph{$(n,\varepsilon)$-separated set} if, for every pair of distinct points $x,y\in Q$, there exists $0\leq j\leq n$ such that
\[
d(f^j(x),f^j(y))>\varepsilon.
\]
Let $S(n,\varepsilon,K)$ denote the maximum cardinality of an $(n,\varepsilon)$-separated subset of $K$. Since $K$ is compact, $S(n,\varepsilon,K)$ is finite. The \emph{topological entropy of $f$ on $K$} is defined by
\[
h(f,K)=\lim_{\varepsilon\to0}\limsup_{n\to\infty}
\frac{1}{n}\log S(n,\varepsilon,K),\]
and the \emph{topological entropy of $f$} is given by
\(
h(f)=h(f,X).
\)

\begin{definition}
Let $\phi$ be a continuous flow on $X$, and let $K\subset X$ be compact. The \emph{topological entropy of $\phi$ on $K$} is defined by
\(
h(\phi,K):=h(\phi_1,K),
\)
where $\phi_1$ is the time-one map of the flow. The \emph{topological entropy of $\phi$} is defined by
\(
h(\phi):=h(\phi_1).
\)
\end{definition}

We now recall some basic notions from symbolic dynamics. Let
\(
A_n=\{0,1,\dots,n-1\},
\)
and denote by
\(
\Sigma_n=A_n^{\mathbb{Z}}
\)
the set of all bi-infinite sequences with entries in $A_n$. The space $\Sigma_n$ can be endowed with a metric that makes it a compact metric space. For distinct points $s,s'\in\Sigma_n$, define
\[
d(s,s')=\frac{1}{2^{N}},
\textrm{ where }
N=\min\{|i|:s_i\neq s'_i\}.
\]
Equivalently,
\(
d(s,s')=\frac{1}{2^{N}},
\)
where $N$ is the largest integer such that
\(
s_j=s'_j\), for every \( |j|<N.
\)
The \emph{full shift on $n$ symbols} is the homeomorphism
\[
\sigma:\Sigma_n\longrightarrow\Sigma_n,
\textrm{ defined by }
\sigma\bigl((s_i)_{i\in\mathbb{Z}}\bigr)
=(s_{i+1})_{i\in\mathbb{Z}}.
\]
It is well known that
\(
h(\sigma)=\log n.
\)
We say that a homeomorphism $f:X\to X$ \emph{factors onto} the full shift $\sigma:\Sigma_n\to\Sigma_n$ if there exists a continuous surjective map
\(
\pi:X\to\Sigma_n
\)
such that
\(
\sigma\circ\pi=\pi\circ f.
\)
Since topological entropy does not increase under factor maps, every homeomorphism that factors onto the full shift has positive topological entropy.

Another important concept used throughout this work is that of a suspension flow, which we now recall. Let $f:X\to X$ be a homeomorphism and let $\rho:X\to(0,\infty)$ be a continuous function, called the \emph{roof function}. Consider the quotient space
\[
\widetilde{X}
=
(X\times\mathbb{R})/\sim,
\]
where the equivalence relation is generated by
\(
(x,\rho(x))\sim(f(x),0).\)
\begin{definition}\label{Def-Susp}
The \emph{suspension flow} of $f$ with roof function $\rho$ is the flow
\(
\phi:\mathbb{R}\times\widetilde{X}\longrightarrow\widetilde{X},
\)
defined by
\(
\phi_t(x,s)=(f^n(x),s'),
\)
where $n$ and $s'$ satisfy
\[
t+s
=
\sum_{j=0}^{n-1}\rho(f^j(x))+s'.
\]
\end{definition}

It is straightforward to verify that the flow defined in Definition~\ref{Def-Susp} is regular and continuous. Moreover, the suspension construction preserves positivity of topological entropy: the suspension flow has positive topological entropy if and only if its base map $f$ has positive topological entropy.

We end this section by recalling the concept of cross-section  at regular points of continuous lows, a concept that will be widely used in this work.
\begin{definition}\label{def:cross_section}
A compact subset \(S\subset X\) is called a cross-section of time \(T>0\) through $x$  if:
\begin{enumerate}
    \item $x\in S$.
    \item $S\cap \phi_{[-T,T]}(y)=\{y\}$ for all \(y\in S\).
    \item  $\phi_{[-T,T]}(S)$ is a closed neighbourhood of $x$.
\end{enumerate}
For a cross-section $S$ of time $T$, we denote $
S^*:=S\cap\operatorname{Int}_X\bigl(\phi_{[-T,T]}(S)\bigr).$
\end{definition}
Observe that by the flow-box property,
\[
\phi_{(-T,T)}(S^*)
=
\operatorname{Int}_X\bigl(\phi_{[-T,T]}(S)\bigr)
=
\operatorname{Int}_X\bigl(\phi_{[-T,T]}(S^*)\bigr).
\]
Cross-sections will be crucial to our results and will appear many times during this text. The construction of cross-sections through regular points of a $C^1$-flow $\phi$  is easily obtained by applying the exponential map to the normal bundle of the velocity vector field generated by $\phi$. The case of continuous flows is more delicate and we refer the reader to \cite{Whitney33} for a detailed exposition of this subject. In particular, from \cite[Theorem~17A]{Whitney33} we obtain the following lemma:

\begin{lemma}\label{lemma_cross-sections}
    For every $x\in X\setminus Sing(\phi)$, there is $t_x>0$ and 
    a local cross-section \(S_x\) of time \(t_x\) such that \(x\in S_x^*\).
\end{lemma}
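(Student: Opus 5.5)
The lemma is essentially a restatement of Whitney's local cross-section theorem, \cite[Theorem~17A]{Whitney33}, in the language of Definition~\ref{def:cross_section}. So the plan is to extract the objects from that theorem and check the three conditions together with $x\in S_x^*$.

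First, fix a regular point $x$. Since $x\notin Sing(\phi)$, there is $\tau>0$ with $\phi_\tau(x)\neq x$. Using continuity of the flow and compactness of $[-\tau,\tau]$, I would choose $t_x>0$ small enough that the orbit segment $\phi_{[-2t_x,2t_x]}(x)$ is an arc, meaning the map $t\mapsto\phi_t(x)$ is injective on $[-2t_x,2t_x]$. This is possible because a periodic regular point has positive period, and a non-periodic point has an injective orbit map.

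Second, invoke Whitney's theorem. It provides a closed set $S\ni x$ and a neighbourhood $V$ of $x$ with two properties. The map $\Phi\colon S\times[-t_x,t_x]\to X$, $(y,t)\mapsto\phi_t(y)$, is a homeomorphism onto its image. That image contains $V$.

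Third, verify the definition for $S_x := S\cap \overline{B}$, where $\overline{B}$ is a small closed ball around $x$.
\begin{enumerate}
\item Compactness follows because $S_x$ is closed in the compact space $X$.
\item Injectivity of $\Phi$ gives condition (2): if $\phi_t(y)=y'\in S_x$ with $|t|\le t_x$, then $(y,t)=(y',0)$. Hence $S_x\cap\phi_{[-t_x,t_x]}(y)=\{y\}$.
\item For condition (3), $\phi_{[-t_x,t_x]}(S_x)=\Phi(S_x\times[-t_x,t_x])$ is compact. Since $\Phi$ is a homeomorphism onto a set containing a neighbourhood of $x$, and $x$ corresponds to the interior point $(x,0)$ of the product once $S_x$ is taken to be a neighbourhood of $x$ in $S$, this image is a closed neighbourhood of $x$.
\end{enumerate}

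Finally, $x\in S_x^*$ holds because $x=\Phi(x,0)$ lies in $\operatorname{Int}_X\phi_{[-t_x,t_x]}(S_x)$ by the previous step.

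The main obstacle is the openness used in condition (3). One must know that $\Phi$ maps small product neighbourhoods of $(x,0)$ onto neighbourhoods of $x$ in $X$, not merely onto subsets of $X$. For a general compact metric $X$ this needs the full strength of Whitney's construction. In that construction $S$ is a level set of a continuous function $g$ with $g(\phi_t(y))$ strictly increasing in $t$ near $x$. By continuity, every point near $x$ then reaches the level set within time $t_x$, so invariance of domain is not needed. I would rely on this part of \cite{Whitney33} rather than reprove it, and shrink $\overline{B}$ as needed so that all points of $\overline{B}$ meet $S$ within time $t_x/2$.
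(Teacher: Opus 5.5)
Your proposal is correct and follows the paper's route. The paper gives no argument beyond deriving the lemma directly from \cite[Theorem~17A]{Whitney33}, and you do the same. You additionally spell out two verifications: injectivity of $\Phi(y,t)=\phi_t(y)$ on $S\times[-t_x,t_x]$ gives condition (2) of Definition~\ref{def:cross_section}, and the neighbourhood property from Whitney's construction, transported through the homeomorphism $\Phi$ to $S_x=S\cap\overline{B}$, gives condition (3) and $x\in S_x^*$.
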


We observe that $t_x$ and the diameter of the cross-section $S_x$ in the previous theorem is highly dependent on the distance of $x$ to $Sing(\phi)$. Nevertheless, we can have a much finer control of these parameters for sets of points which are uniformly away from $Sing(\phi)$.

\section{Standard vs Oriented Shadowing on Surfaces}\label{sec:examplesufaces}

In this section we are going to prove Theorem \ref{thm:denseflows}. The first ingredients of the proof consist of Theorems \ref{thm:disc} and \ref{thm:annulus}. In these results, we shall construct flows on the closed disc and in the annulus that have the oriented shadowing property but fails to satisfy the standard shadowing property. To achieve this, we introduce a countable collection of singularities into a flow that originally possesses the shadowing property. These singularities are carefully arranged so that the shadowing property is preserved, while the reparametrization estimates required for standard shadowing are no longer valid. Let us denote $$D=\{x\in \mathbb{R}^2: \Vert x\Vert\leq 1\} \textrm{ and } A=\{x\in \mathbb{R}^2: 1\leq\Vert x\Vert\leq 2\}.$$

In what follows, we prove Theorem~\ref{thm:disc}, which is one of the main results of the paper. It is one of the main ingredients in the Proof of Theorem \ref{thm:denseflows} and shows that oriented shadowing is not sufficient for standard shadowing, even if we have low dimensional smooth flow. The proof is by a rather technical
construction, requiring several steps with careful control of parameters.  For the reader's convenience, we first explain the structure of the argument and the purpose of its main steps.

\begin{enumerate}
    \item \textbf{Construction of the base flow.} 
    We start with a simple Morse-Smale flow on the disk. We then multiply 
    its vector field by a smooth non-negative function \(\rho\). 
    This operation preserves the orbit curves but changes the speed at 
    which they are traversed. At the zeros of \(\rho\), new singularities 
    appear, and the corresponding orbit curves split into branches 
    asymptotic to these singularities.

    \item \textbf{Geometric arrangement and separation of crossing times.} 
    The zeros of \(\rho\) are arranged so that small pseudo-orbits can 
    jump between nearby branches, but only in an order compatible with the 
    flow's orientation. Inside two distinguished flow boxes, \(\rho\) is designed to produce very large crossing times. The crossing  times of sufficiently close orbit curves can differ by arbitrarily 
    large factors. This time-scale separation is the key mechanism 
    that will distinguish oriented shadowing from standard shadowing.

    \item \textbf{Proving oriented shadowing.} 
    We first prove that every sufficiently accurate pseudo-orbit must 
    follow the correct geometric order. Locally, as long as the velocity 
    is not too small, the forward displacement dominates the errors from 
    the jumps. Globally, this implies that the pseudo-orbit visits the 
    neighbourhoods of the singularities in the same sequence as a genuine 
    orbit, and its regular segments stay within a small distance. Consequently, we can select a genuine orbit that shares the same geometric itinerary. Since oriented shadowing allows arbitrary increasing 
    reparametrizations, we can adjust the timing of this selected orbit to 
    match the pseudo-orbit's geometry without reproducing its exact 
    crossing times.

    \item \textbf{Obstacle to standard shadowing.} 
    To rule out standard shadowing, we construct a specific pseudo-orbit 
    that stays near a given point for a very long time and then crosses 
    the second flow box through small jumps. Any genuine orbit tracing 
    this pseudo-orbit must combine short and long crossing times 
    through the box. Because standard  shadowing requires extra regularity for the reparametrizations, it cannot allow this discrepancy.
\end{enumerate}

Let us now proceed with the announced theorem and its proof.
\medskip

\begin{theorem}\label{thm:disc}
There is a $C^\infty$-flow in  $D$ with the oriented shadowing property, but without the standard shadowing property.
\end{theorem}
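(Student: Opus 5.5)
We follow the four-step outline above. Start from a Morse--Smale gradient-like $C^\infty$ vector field $X_0$ on $D$ with a hyperbolic source at the origin. Its orbits are the radial segments. The boundary circle $\partial D$ is invariant and carries a hyperbolic sink $p$ and a hyperbolic saddle $q$ (a saddle on the boundary), so that $X_0$ is tangent to $\partial D$. Such a flow has shadowing, and for the pieces we keep we only use its explicit orbit structure.

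Inside two disjoint flow boxes $B_1,B_2$ crossed by radial orbits, we multiply $X_0$ by a $C^\infty$ function $\rho\ge 0$. The function $\rho$ equals $1$ outside $B_1\cup B_2$. In suitable box coordinates $(u,v)\in[0,1]^2$ ($u$ the flow direction, $v$ transverse), its zero set inside $B_j$ is a sequence of points $z^j_n=(u^j_n,v^j_n)$ with $v^j_n\to v^j_\infty$, accumulating at a single point $z^j_\infty$. To keep $\rho$ smooth at the accumulation point, we take $\rho(u,v)=\psi(v)+\chi(u,v)$. Here $\psi\ge 0$ is flat, vanishes exactly at the $v^j_n$ and at $v^j_\infty$, and is of size $e^{-1/|v-v^j_\infty|}$ near the limit. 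The term $\chi$ vanishes only on the line $u=u^j_n$ near $v^j_n$, to second order, with bumps whose $C^k$ norms decay superexponentially in $n$.

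The crossing time of the horizontal segment at height $v$ is then $\tau_j(v)=\int_0^1 du/\rho(u,v)$. It equals $+\infty$ exactly at $v\in\{v^j_n\}\cup\{v^j_\infty\}$, and it blows up like $|v-v^j_n|^{-1}$ near each $v^j_n$. Hence on every interval $(v^j_{n+1},v^j_n)$ the function $\tau_j$ takes all large values. Consequently, for every $\eta>0$ and every $K$ there are heights $v,v'$ with $|v-v'|<\eta$ and $\tau_j(v')>K\tau_j(v)$. The box $B_2$ is placed downstream of $B_1$ along the same radial band. This guarantees that an orbit slowed down in $B_1$ must afterwards cross $B_2$ with an independently prescribed time.

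\textbf{Failure of standard shadowing.} Fix $\varepsilon$ small, and suppose $\delta$ works for standard shadowing. Build a $\delta$-$1$-pseudo-orbit as follows. It sits at a singularity $z^1_n$ (or very near it) for time $L\gg 1$. It then jumps by less than $\delta$ onto a regular segment of height $v$ and follows it into $B_2$. There it crosses $B_2$ in a bounded time $C$ by hopping with $\delta$-jumps between heights where $\rho$ is of order one. After leaving $B_2$ it converges to the sink.

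Any tracing orbit must remain $\varepsilon$-close to $z^1_n$. For $\varepsilon$ smaller than the gap between consecutive branches, it must therefore lie on a branch of height $v'$ within $\varepsilon$ of $v^1_n$. If that branch lies beyond all zeros, its $B_2$-crossing time is forced by the arrangement. We place $v^2_m$ so that every height within $\varepsilon$ of $v^1_n$ in $B_2$ is near some $v^2_m$ with $\tau_2\gg C/\varepsilon$. Alternatively, to cover time $L$ near $z^1_n$, the orbit has $\tau_1(v')\approx L$. In either case $h$ must map an interval of length $C$ to one of length at least $C/\varepsilon$, or one of length $L$ to one of length at most $\varepsilon$. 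This contradicts $h\in\mathrm{Rep}(\varepsilon)$.

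\textbf{Oriented shadowing.} This is the main obstacle. First, using that $|X_0|$ is bounded below away from the source and boundary, and that $\rho$ is bounded below outside small neighbourhoods $V_n$ of the zeros, show an ordering lemma. For $\delta$ small, a $\delta$-$1$-pseudo-orbit has monotone radial coordinate up to $\varepsilon$, and it can change transverse height only by $O(\delta)$ per jump. Jumps of accumulated height are only possible while the pseudo-orbit is slow, that is, inside some $V_n$ or near the source or the boundary. Consequently, the pseudo-orbit's trajectory is a concatenation of the following pieces: a stay near the source; a monotone radial passage through $B_1$ and $B_2$ in which it may linger in finitely many $V_n$; and a final stay near $\partial D$ following the boundary dynamics between $q$ and $p$. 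The boundary part is handled by the known shadowing of the hyperbolic boundary flow.

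Given this itinerary, choose a genuine orbit whose geometric path is $\varepsilon$-close to it. If the pseudo-orbit lingers at a point of $V_n$, pick the actual singular branch ending or starting at $z_n$, or the singularity itself. If it only passes through, pick a nearby regular radial orbit. The finitely many transitions are compatible because each branch of $z_n$ continues the same radial line, and the flatness of $\rho$ makes neighbouring heights geometrically close. Then define $h\in\mathrm{Rep}$ piecewise, matching the entrance and exit times of the pieces. Arbitrary monotone rescaling absorbs the unbounded discrepancy between $\tau(v)$ and the pseudo-orbit's time. The delicate point is uniformity of $\delta$ in $\varepsilon$ near the accumulation point $z_\infty$, where infinitely many zeros lie within $\delta$. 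We handle it by choosing $V_n$ of diameter less than $\varepsilon/4$ for all large $n$, and by treating all of $\bigcup_{n\ge N}V_n$ as a single slow region. A pseudo-orbit there is traced by the singular orbit through $z_\infty$, or by a branch of some $z_n$, which stays within $\varepsilon$.
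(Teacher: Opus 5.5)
Your geometric setup, with two flow boxes in series along a band of orbits and the speed changed by $\rho$, is a reasonable substitute for the paper's $J$ and $J'=P(J)$. However, your argument for the failure of standard shadowing does not work, and that argument is the heart of the theorem.

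You want every admissible tracer to spend time $\gg C/\varepsilon$ in $B_2$ while your pseudo-orbit crosses $B_2$ in a $\delta$-independent time $C$. There are three problems with this.

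First, being ``near some $v^2_m$'' gives no lower bound on $\tau_2$. The crossing time is large only on shrinking neighbourhoods of the zero heights.

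Second, suppose you instead force $\tau_2\gg C/\varepsilon$ on a whole interval of heights. Then your pseudo-orbit cannot exist for small $\delta$. In time $C$ it makes at most $C+1$ jumps, so its transverse height drifts by only $O(C\delta)$. Its crossing time therefore either tends to $\tau_2$ at a single height, or blows up. Blow-up happens near a zero, since a smooth $\rho\ge0$ vanishes to second order there, and near $z^2_\infty$, since $\rho$ is small on a whole neighbourhood.

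Third, the ``alternatively'' branch gives no contradiction. An orbit lingering near $z^1_n$ for a time comparable to $L$ is exactly what the pseudo-orbit does.

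In fact no one-sided bound can work. If the outgoing branches of all $B_1$-zeros near $z^1_n$ end at $B_2$-zeros, consider the pseudo-orbit that rests at $z^1_n$ for all negative time and then passes $B_2$. Any tracer must be such a branch, so this pseudo-orbit has no oriented tracer and oriented shadowing fails. If instead some of these branches cross $B_2$ in finite time, they and nearby regular orbits are candidate tracers. The pseudo-orbit must then realize a time in $B_2$ that is incompatible with all of their crossing times at once.

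The missing idea is the paper's two-sided gap.

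1. Claim~\ref{cl:long-residence} pins the tracer. Resting for time $T$ at $x_n$ forces the tracer's transverse coordinate into small intervals $\mathcal I_i$ around $x_1,\dots,x_n$, or into a tail interval $\mathcal I_\infty$.
2. The second box is built relative to the return map. The points $z_i=P(x_i)$ are regular with crossing times satisfying $\widehat T_{n+1}\ge 2^n\widehat T_n$. Hence pinned tracers cross the second box either in time at most about $\widehat T_n$ or in time at least $2^{n-1}\widehat T_n$.
3. The second-box zeros $y_n$ are interleaved between $z_n$ and $z_{n+1}$, with $d(z_n,y_n)<\delta/4$ for large $n$.
4. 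The pseudo-orbit follows the outgoing branch of $x_n$ to $z_n$, jumps to $y_n$, and rests there for time $T'=4(\widehat T_n+C)$. It then leaves through a regular point whose crossing time lies in $(T',2T')$.
5. This gives an intermediate residence time in the second box that no pinned orbit can match within a factor $1\pm\varepsilon_0$.

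Your oriented-shadowing sketch lacks the same ingredient. Choosing a singular branch separately in each box does not produce genuine orbits for pseudo-orbits that are terminal in one cluster and also terminal in, or passing through, the other. The paper needs three kinds of orbits for this:
\begin{enumerate}
\item outgoing branches of $x_n$ that cross the second box regularly, through $z_n$;
\item incoming branches of $y_n$ that cross the first box regularly, through $P^{-1}(y_n)$;
\item the heteroclinic branch $B$ from $x_0$ to $z_0$.
\end{enumerate}
The interleaving is precisely what makes all three available.
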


\begin{proof}
We begin by considering the closed disc \(D\) and the vertical radius
\[
 I=\{(w,z)\in D:w=0,\ z\ge0\}.
\]
We use the \(C^\infty\) vector field
\begin{equation}\label{eq:explicit-initial-flow}
 X(x,y)=\bigl(-(1-x^2-y^2)x-y,\,
               x-(1-x^2-y^2)y\bigr).
\end{equation}
Its flow will be denoted by \(\phi'\).  In polar coordinates it satisfies
\begin{equation}
 \dot r=-r(1-r^2),\qquad \dot\theta=1.\label{eq:ramdtheta}
\end{equation}
Consequently, \(0\) is a hyperbolic sink, \(\partial D\) is a
hyperbolic repelling periodic orbit, every other orbit converges to
\(\partial D\) in the past and to \(0\) in the future, and \(r\) is
strictly decreasing on \(D\setminus(\{0\}\cup\partial D)\).

For \(0<r<1\), define
\[
 u(r)=\log\frac{r}{\sqrt{1-r^2}}
\]
and the functions
\begin{equation}\label{eq:def:a}
 a\colon\operatorname{Int}D\setminus\{0\}\longrightarrow\mathbb S^1,
 \qquad
 a(r,\theta)=\theta+u(r)\pmod{2\pi},
\end{equation}
and
\begin{equation}\label{eq:def:q}
 q\colon\operatorname{Int}D\setminus\{0\}\longrightarrow\mathbb R,
 \qquad
 q(r,\theta)=-u(r).
\end{equation}
Since
\[
 u'(r)=\frac1{r(1-r^2)},
\]
we have
\begin{equation}\label{eq:orbit-coordinates}
 \dot a=0,\qquad \dot q=1
\end{equation}
for \(\phi'\).  Thus \(a\) is constant along every
orbit and labels the orbits of \(\phi'\), whereas \(q\) is strictly
increasing along them.  On every compact annulus contained
in \(\operatorname{Int}D\setminus\{0\}\), the coordinates \((a,q)\)
and the Euclidean coordinates are uniformly bi-Lipschitz. 

 In what follows, we shall modify $\phi'$, by including some singularities, to obtain a new flow $\phi$ with the shadowing property, but without the standard shadowing property.  We will perform speed change on the orbits. After the change we will have
 \begin{equation}\label{eq:time-changed-coordinates}
 	\dot a=0,\qquad \dot q=\rho(a,q)\ge0
 \end{equation}
 that is, the time change will alter only the speed but not the
 orientation of the orbit curves (possibly, with finitely many singularities on some curves).
In particular,
 \[
  a(\phi_t(x))=a(x)
 \]
 whenever the orbit is contained in
 \(\operatorname{Int}D\setminus\{0\}\).
 For this sake, we are going to construct a compact set $S\subset I$ and a $C^\infty$ map $\rho: D\to [0,1]$ such that
\begin{enumerate}
    \item $0\leq \rho(x)\leq 1$, for every $x\in D$.
     \item  $\rho(x)=0$ if and only if $x\in S$.
     \item $\rho\equiv 1$ on a small collar of $\partial D$.
\end{enumerate}
The desired flow $\phi$ is then obtained as the flow generated by the continuous vector field $\rho X$. Consequently, the velocity of $\phi$ at any point $x\in D$ is given by $\rho(x)X(x)$  and every point in $S$ is a singularity for $\phi$.
Since $X$ and $\rho$ are $C^\infty$, the vector field $\rho X$ is $C^\infty$, and in particular locally Lipschitz; hence it has unique integral curves. As $D$ is compact and $\rho X$ is tangent to $\partial D$, these curves are defined for all time, so $\rho X$ generates a $C^\infty$ flow $\phi$ on $D$, whose singularities are exactly the zeros of $\rho X$, that is $\sing(\phi)=S\cup\{0\}$. Moreover, near $\sing(\phi)$ the norm $\|\rho X(x)\|$ is bounded above by a constant multiple of $d(x,\sing(\phi))$, so an orbit takes infinite time to reach a singularity. Thus every point of $S\cup\{0\}$ is reached only asymptotically. Consequently the orbits of $\phi$ through points of $D\setminus(S\cup\{0\})$ are exactly the orbits of $\phi'$, reparametrized and broken into asymptotic branches at each point where they meet $S$.

\medskip
\noindent\textit{The Construction of $S$.}
Let $P$ denote the first return map of the points in $I\setminus \{0\}$ to $I\setminus \{0\}$ under the flow $\phi'$. Fix any point $x_0$ in the interior of the segment $I$ and let $z_0=P(x_0)$ and $z'_0=P(z_0)$. Let $J, J',J''\subset I$ be small intervals with endpoints  $x_0, z_0,z'_0$, respectively, such that additionally:
\begin{itemize}
\item $P(J)=J'$, $P(J')=J''$.
\item $J\cap J'=\emptyset$ and $J'\cap J''=\emptyset$.
\end{itemize}
Since $0$ is a hyperbolic sink, $\partial D$ is a hyperbolic repelling periodic orbit, and $r$ is strictly decreasing along every orbit in $D\setminus(\{0\}\cup\partial D)$, the map $P$ is a homeomorphism of $I$ with exactly two fixed points, the endpoint $I\cap\partial D$, which is repelling, and $0$, which is attracting, and $P(s)$ lies strictly between $0$ and $s$ otherwise and the return time is bounded above and below. Thus $P$ is a Morse--Smale interval map and $\phi'$ is a Morse--Smale, hence structurally stable, flow on $D$; in particular $\phi'$ has the shadowing property.

Let \(x_1\) be the endpoint of \(J\) different from \(x_0\), and put
\(z_1=P(x_1)\).  Thus
\[
 J=[x_0,x_1],\qquad J'=[z_0,z_1],\qquad
 J''=[z'_0,z'_1],
\]
where \(z'_1=P(z_1)\).  Choose \(y_1\) strictly between \(z_0\) and
\(z_1\).  Using the affine structure of the segment \(J'\), define,
for every \(n\ge2\),
\begin{equation}\label{eq:centered-choice}
 y_n=z_0+\frac12(y_{n-1}-z_0),\qquad
 z_n=z_0+\frac34(y_{n-1}-z_0),\qquad
 x_n=P^{-1}(z_n).
\end{equation}
Observe that
\begin{equation}\label{eq:zn-midpoint}
 z_n=\frac{y_{n-1}+y_n}{2}\qquad(n\ge2).
\end{equation}
and  \(y_n,z_n\to z_0\).  Since \(P^{-1}\) is continuous
and order preserving, \((x_n)\) is strictly monotone and
\(x_n\to P^{-1}(z_0)=x_0\).  Also, \(y_n\) lies strictly between
\(z_n=P(x_n)\) and \(z_{n+1}=P(x_{n+1})\).  Hence
\(P^{-1}(y_n)\) lies strictly between \(x_n\) and \(x_{n+1}\), and the
trajectories through the points \(x_n\) and \(y_m\) are pairwise
distinct.
Denote
 \begin{equation}\label{eq:eta}
     \eta=\frac{1}{100}\min \{\diam(J),\diam(J'),\diam(J''),|x_1-z_0|,|z_1-z'_0|\}.\end{equation}
Finally, we conclude our choice of $S$ by denoting
$$
S=\{x_n;n\geq 0\}\cup \{y_n;n\geq 1\}\cup \{z_0\}.
$$
Observe that $S$ is clearly compact, because the only accumulation points of $S$ are $x_0$ and $z_0$, by the construction.

\begin{figure}[htbp!]
    \centering
    \begin{subfigure}[b]{0.6\textwidth}
        \centering
   	\begin{tikzpicture}[yscale=0.5*1.2, xscale=0.5*1.2]	
	\draw[line width=2pt, cyan!70!blue] (0, 0) -- (0, 5.5);
	
	\draw[red, line width=2.5pt] (0, 4.3) -- (0, 5.2) node[midway, left=0.1cm, black] {$J$};
	\draw[red, line width=2.5pt] (0, 2.9) -- (0, 3.8) node[midway, left=0.1cm, black] {$J'$};
	\draw[red, line width=2.5pt] (0, 1.6) -- (0, 2.3) node[midway, left=0.15cm, black] {$J''$};
	
	\draw[thick, mid arrow=0.5] (0, 5.5) arc[start angle=90, end angle=270, x radius=5.5, y radius=5];
	\draw[thick, mid arrow=0.5] (0, -4.5) arc[start angle=-90, end angle=90, x radius=5.5, y radius=5];
	
	\draw[thick, color=darkgray, mid arrow=0.25, mid arrow=0.75] (0, 4.7) 
arc[start angle=90, end angle=270, x radius=3.2, y radius=2.95] 
arc[start angle=-90, end angle=90, x radius=2.2, y radius=2.35];
	
	\draw[thick, mid arrow=0.25, mid arrow=0.75] (0, 3.2) 
	arc[start angle=90, end angle=270, x radius=1.8, y radius=1.85] 
	arc[start angle=-90, end angle=90, x radius=1.3, y radius=1.25];
	
	\filldraw (0, 0) circle (2pt) node[right=0.1cm] {$0$};
	
	\filldraw (0, 1.6) circle (2pt) node[right=0.1cm] {$z_1'$};
	\filldraw (0, 2.3) circle (2pt) node[right=0.1cm] {$z_0'$};
	\filldraw (0, 2.9) circle (2pt) node[right=0.1cm] {$z_1$};
	\filldraw (0, 3.8) circle (2pt) node[right=0.1cm] {$z_0$};
	\filldraw (0, 4.3) circle (2pt) node[right=0.1cm] {$x_1$};
	\filldraw (0, 5.2) circle (2pt) node[right=0.1cm] {$x_0$};
	
\end{tikzpicture}
    \end{subfigure}
    \begin{subfigure}[b]{0.3\textwidth}
        \centering
	
        	\begin{tikzpicture}[yscale=0.6*1.2, xscale=0.6*1.5]
        		\coordinate (X1) at (0,0);
        		\coordinate (X2) at (0,2);
        		\coordinate (Xn) at (0,3.5);
        		\coordinate (Xn1) at (0,4.5);
        		\coordinate (X0) at (0,5.5);
        		
        		\coordinate (Z1) at (4,0);
        		\coordinate (Y1) at (4,1);
        		\coordinate (Z2) at (4,2);
        		\coordinate (Zn) at (4,3.5);
        		\coordinate (Yn) at (4,4);
        		\coordinate (Zn1) at (4,4.5);
        		\coordinate (Z0) at (4,5.5);
        		
        		\draw[red, line width=2pt] (X1) -- (X0);
        		\draw[red, line width=2pt] (Z1) -- (Z0);
        		\node[above=0.3cm] at (X0) {$J$};
        		\node[above=0.3cm] at (Z0) {$J'$};
        		
        		\draw[-{Stealth[scale=1.2]}] (X1) -- (2.1,0) -- (Z1);
        		\draw[-{Stealth[scale=1.2]}] (X2) -- (2.1,2) -- (Z2);
        		\draw[-{Stealth[scale=1.2]}] (Xn) -- (2.1,3.5) -- (Zn);
        		\draw[-{Stealth[scale=1.2]}] (Xn1) -- (2.1,4.5) -- (Zn1);
        		\draw[-{Stealth[scale=1.2]}] (X0) -- (2.1,5.5) -- (Z0);
        		
        		\filldraw (X1) circle (1.5pt) node[left=0.2cm] {$x_1$};
        		\filldraw (X2) circle (1.5pt) node[left=0.2cm] {$x_2$};
        		\filldraw (Xn) circle (1.5pt) node[left=0.2cm] {$x_n$};
        		\filldraw (Xn1) circle (1.5pt) node[left=0.2cm] {$x_{n+1}$};
        		\filldraw (X0) circle (1.5pt) node[left=0.2cm] {$x_0$};
        		
        		\filldraw (Z1) circle (1.5pt) node[right=0.1cm] {$z_1$};
        		\filldraw (Y1) circle (1.5pt) node[right=0.1cm] {$y_1$};
        		\filldraw (Z2) circle (1.5pt) node[right=0.1cm] {$z_2$};
        		\filldraw (Zn) circle (1.5pt) node[right=0.1cm] {$z_n$};
        		\filldraw (Yn) circle (1.5pt) node[right=0.1cm] {$y_n$};
        		\filldraw (Zn1) circle (1.5pt) node[right=0.1cm] {$z_{n+1}$}; 
        		\filldraw (Z0) circle (1.5pt) node[right=0.1cm] {$z_0$};
        		
        	\end{tikzpicture}
    \end{subfigure}
    
    \caption{The structure of $S$, $J$, $J'$ and $J''$.}
    \label{fig:setS}
\end{figure}

\medskip
\noindent \textit{The construction of $\rho$ and the flow $\phi$.}
After choosing the set $S$, we proceed to construct the function $\rho$. We begin by setting notation. Let $\kappa>0$ be a positive constant. Since all the points in $J$ and $J'$ are regular for $\phi'$, by reducing $\kappa$ when necessary, one can assume that the sets $$Q_1=\phi'_{[-\kappa,\kappa]}(J), Q'_1=\phi'_{[-\kappa,\kappa]}(J') \textrm{ and } Q''_1=\phi'_{[-\kappa,\kappa]}(J'')$$ are flow boxes. In addition, we can assume that, for any $x\in J\cup J'\cup J''$, the diameter of the orbit segment $\phi'_{[-\kappa,\kappa]}(x)$ is smaller than $\frac{\eta}{2}$.  This in turn implies that $Q_1\cap Q_1'=\emptyset$ and $Q_1'\cap Q''_1=\emptyset$.
For each $n>1$, we denote $$Q_n=\phi'_{[-\kappa,\kappa]}([x_0,x_n]) \textrm{ and } Q'_{n}=\phi'_{[-\kappa,\kappa]}([z_0,y_n]).$$
Notice that $Q_{n+1}\subset Q_n$ and $Q'_{n+1}\subset Q'_n$, for every $n\geq 1$. To finish the notation setup, let us denote:
$$L_n^-=\phi'_{-\kappa}([x_0,x_n]),\quad L_n^+=\phi'_{\kappa}([x_0,x_n]),\quad {L'}_n^-=\phi'_{-\kappa}([z_0,y_n]) \textrm{ and } {L'}_n^+=\phi'_{\kappa}([z_0,y_n]).$$
Observe that the previous sets are all cross-sections for $\phi'$.

Let $\hat{J}\supset \interior \hat{J}\supset J$ be a slightly longer arc in $I$ which still satisfies $P(\hat J)\cap \hat J=\emptyset$ and let
$$\hat{Q}_1=\phi'_{[-\kappa,\kappa]}(\hat{J}) \text{ and }  \hat{Q}'_1=\phi'_{[-\kappa,\kappa]}(P(\hat {J})).$$
Together with the bound on the diameters of the orbit segments and \eqref{eq:eta}, this gives $\hat Q_1\cap\hat Q'_1=\emptyset$.

We construct $\rho$ in the flow-box coordinates $\hat{Q}_1\cong \hat{J}\times[-\kappa,\kappa]$ and $\hat{Q}'_1\cong P(\hat{J})\times[-\kappa,\kappa]$, in which $\phi'$ is the translation $(s,\tau)\mapsto(s,\tau+\,\cdot\,)$ and $S$ corresponds to the points $(x_m,0)_{m\ge0}$ on the $\hat{J}$-side and $(y_m,0)_{m\ge1}$, $(z_0,0)$ on the $P(\hat{J})$-side. Set
$\rho(s,\tau)=1-b(\tau)\bigl(1-g(s)\bigr)$ on the above boxes, and $\rho\equiv1$  elsewhere on $D$,
where:
\begin{itemize}
\item \(b\colon[-\kappa,\kappa]\to[0,1]\) is an even and smooth
bump function, strictly decreasing on \([0,\kappa]\), with
\(b(0)=1\), \(b(\tau)<1\) for \(\tau\ne0\), and \(b(\tau)=0\) near
\(\{-\kappa,\kappa\}\).  We also require \(b''(0)<0\).  Then, for sufficiently small
\(\tau_0>0\), there are constants \(c_b,C_b>0\) such that
\begin{equation}\label{eq:bquad}
 c_b\tau^2\le1-b(\tau)\le C_b\tau^2
 \qquad(|\tau|\le\tau_0).
\end{equation}
Since \(1-b(\tau)>0\) for \(0<|\tau|\le\kappa\), decreasing \(c_b\) if necessary, we may assume that
$c_b\tau^2\le1-b(\tau)$ for all $\tau \in [-\kappa,\kappa]$.
\item \(g\) is a smooth function on \(\hat J\), respectively
\(P(\hat J)\), with values in \([0,1]\), equal to \(1\) near both
ends of the arc, and vanishing exactly on
\(\{x_m:m\ge0\}\), respectively
\(\{y_m:m\ge1\}\cup\{z_0\}\). 

Consider the $C^\infty$ function
\[
 \psi(t)=
 \begin{cases}
 \exp\bigl(4-\frac{1}{t(1-t)}\bigr),&0<t<1,\\
 0,&t\notin(0,1),
 \end{cases}
\]
with maximum \(\psi(1/2)=1\) and values in \([0,1]\).
Use affine coordinates on \(\hat J\) and \(P(\hat J)\), oriented from
\(x_0\) to \(x_1\) and from \(z_0\) to \(z_1\), respectively.  On
\((x_{n+1},x_n)\), \(n\ge1\), define
\[
 w_n=x_n-x_{n+1},\qquad
 g_n(s)=h_n\psi\!\left(\frac{s-x_{n+1}}{w_n}\right),
 \qquad 0<h_n\le\min\{1,w_n^n\}.
\]
Similarly, on \((y_n,y_{n-1})\), \(n\ge2\), put
\[
 \widehat w_n=y_{n-1}-y_n,\qquad
 \widehat g_n(s)=\widehat h_n
 \psi\!\left(\frac{s-y_n}{\widehat w_n}\right),
 \qquad 0<\widehat h_n\le\min\{1,\widehat w_n^n\}.
\]
Define \(g\) by these functions on the bounded gaps and set \(g=0\)
on the prescribed zero sets.  On the remaining components use smooth
cutoffs, flat at the zero endpoint and equal to \(1\) near the
endpoints of the enlarged arcs.  By
\eqref{eq:zn-midpoint}, \(\widehat g_n\) has its unique maximum
\(\widehat h_n\) at \(z_n\). We will later specify how fast $\widehat h_n$ should decrease to satisfy some additional properties.
\end{itemize}
\begin{figure}[ht]
	\centering
	\resizebox{0.9\textwidth}{!}{%
	\begin{tikzpicture}[
		x=1cm,
		y=1cm,
		line cap=round,
		line join=round,
		bluegraph/.style={blue!75!black, very thick},
		point/.style={circle, fill=black, inner sep=1.7pt}
		]
\begin{scope}[xshift=0cm]
	
	\draw[thick] (0,0) -- (8.0,0);
	\draw[thick] (0,0) -- (0,4.6);
	\node[left] at (0,0) {$0$};
	\node[left] at (0,4) {$1$};
	
	\node at (3.9,4.55) {$g(x)$};
	
	\draw[bluegraph,dashed] (-0.35,4) -- (0.35,4);
	
	\draw[bluegraph]
	(0.35,4)
	-- (1.35,4)
	.. controls (1.85,4) and (2.10,3.70) .. (2.35,2.70)
	.. controls (2.65,1.50) and (2.72,0) .. (2.95,0);
	
	\draw[bluegraph]
	(2.95,0)
	.. controls (3.18,0) and (3.20,1.15) .. (3.50,1.15)
	.. controls (3.80,1.15) and (3.82,0) .. (4.05,0);
	
	\draw[bluegraph]
	(4.05,0)
	.. controls (4.20,0) and (4.22,0.62) .. (4.425,0.62)
	.. controls (4.63,0.62) and (4.65,0) .. (4.80,0);
	
	\draw[bluegraph]
	(4.80,0)
	.. controls (4.90,0) and (4.92,0.30) .. (5.05,0.30)
	.. controls (5.18,0.30) and (5.20,0) .. (5.30,0);
	
	\draw[bluegraph]
	(5.30,0)
	.. controls (5.37,0) and (5.38,0.15) .. (5.465,0.15)
	.. controls (5.55,0.15) and (5.56,0) .. (5.63,0);
	
	\draw[bluegraph]
	(5.63,0)
	.. controls (5.675,0) and (5.685,0.075) .. (5.74,0.075)
	.. controls (5.795,0.075) and (5.805,0) .. (5.85,0);
	
	\node at (6.2,0.08) {$\cdots$};
	
	\draw[bluegraph]
	(6.48,0)
	.. controls (6.72,0) and (6.68,3.05) .. (7.08,3.65)
	.. controls (7.25,3.90) and (7.42,4) .. (7.70,4);
	
	\draw[bluegraph,dashed] (7.70,4) -- (8.10,4);
	
	\foreach \x in {
		2.95,
		4.05,
		4.80,
		5.30,
		5.63,
		5.85,
		6.48
	}{
		\fill (\x,0) circle[radius=2.1pt];
	}
	
\end{scope}
		
		\begin{scope}[xshift=10cm]
			
			\draw[thick] (0,0) -- (5.6,0);
			\draw[thick] (0,0) -- (0,4.6);
			
			\node[left] at (0,0) {$0$};
			\node[left] at (0,4) {$1$};
			
			\node at (2.8,4.55) {$b(\tau)$};
			
			\draw[bluegraph]
			(0,0) -- (0.35,0);
			
			\draw[bluegraph]
			(0.35,0)
			.. controls (0.75,0) and (1.05,0.25) .. (1.35,1.15)
			.. controls (1.75,2.45) and (2.15,4) .. (2.80,4)
			.. controls (3.45,4) and (3.85,2.45) .. (4.25,1.15)
			.. controls (4.55,0.25) and (4.85,0) .. (5.25,0)
			-- (5.60,0);
			
			\draw[bluegraph,dashed] (-0.35,0) -- (0,0);
			\draw[bluegraph,dashed] (5.60,0) -- (5.95,0);
			
			\draw[dashed,gray]
			(2.80,0) -- (2.80,4);
			
			\node[point] at (0.35,0) {};
			\node[point] at (2.80,0) {};
			\node[point] at (2.80,4) {};
			\node[point] at (5.25,0) {};
			
			\node[below] at (0.35,0) {$-\kappa$};
			\node[below] at (2.80,0) {$0$};
			\node[below] at (5.25,0) {$\kappa$};
			
		\end{scope}
	\end{tikzpicture}
}
	\caption{Exemplary graphs of functions $g$ an $b$.}
	\label{fig:graphs-g-b}
\end{figure}
The equality \(b=0\) near $-\kappa,\kappa$ makes \(\rho\) extend smoothly
by \(1\) outside the boxes.
Then $\rho$ is $C^\infty$ and satisfies:
\begin{enumerate}
\item\label{con:thm1:1} $\rho(x)=1$ for every $x\in D\setminus(\hat{Q}_1\cup\hat{Q}'_1)$;
\item\label{con:thm1:2} $\rho(x)=0$ if and only if $x\in S$;
\item\label{con:thm1:3} $0\le\rho(x)\le1$ for every $x\in D$.
\end{enumerate}
Indeed, \eqref{con:thm1:3} holds because values of $b$ and $1-g$ are in $[0,1]$;
\eqref{con:thm1:1} because $g\equiv1$ near the ends of the arc and $b\equiv0$ near $\pm\kappa$; and for \eqref{con:thm1:2}, $\rho(s,\tau)=0$ iff $b(\tau)(1-g(s))=1$, i.e.\ iff $b(\tau)=1$ \emph{and} $g(s)=0$, i.e.\ iff $\tau=0$ and $s$ is a singular value --- that is, iff $(s,\tau)\in S$.

By construction, \(\rho\) is \(C^\infty\) away from the vertical
segments
\[
\{x_0\}\times[-\kappa,\kappa]\subset\widehat Q_1,
\qquad
\{z_0\}\times[-\kappa,\kappa]\subset\widehat Q'_1
\]
so it remains to verify its smoothness along them.
For every fixed \(k\) and all sufficiently large \(n\),
\[
 \|g_n\|_{C^k}\le C_k\frac{h_n}{w_n^k}
 \le C_k w_n^{n-k},
 \qquad
 \|\widehat g_n\|_{C^k}
 \le C_k\frac{\widehat h_n}{\widehat w_n^k}
 \le C_k\widehat w_n^{n-k},
\]
where \(C_k=\max_{j\le k}\|\psi^{(j)}\|_\infty\).  Both right-hand
sides tend to \(0\), so \(g\), and hence \(\rho\), is \(C^\infty\) at
the accumulation orbits.

\medskip
\noindent \emph{Crossing times.} In the flow-box coordinates, within trajectory over a fixed vertical segment at $s$ without critical values over it, the time the $\phi$-orbit  spends in the box is
\[
T(s)=\int_{-\kappa}^{\kappa}\frac{d\tau}{\rho(s,\tau)}=\int_{-\kappa}^{\kappa}\frac{d\tau}{1-b(\tau)\bigl(1-g(s)\bigr)} ,
\]
which changes in the values depending only on the function of $g(s)$. Furthermore, for each fixed $\tau$ the denominator increases with $g(s)$ (as $b(\tau)\ge0$), so the integrand decreases, hence $T$ is a strictly decreasing function of $g(s)$. It equals $2\kappa$ when $g(s)=1$ and tends to $+\infty$ as $g(s)\to0$ (the integrand then blows up at $\tau=0$, where $b=1$). In particular $T(s)$ is large precisely where $g(s)$ is small. For the future arguments, we record the estimate of the crossing time. Since $\rho(s,\tau)=(1-b(\tau))+b(\tau)g(s)$ and $\rho(s,0)=g(s)$, splitting the integral at a $\tau_0$ with $b\ge\frac12$ on $[-\tau_0,\tau_0]$ and using \eqref{eq:bquad}, we obtain that:
\begin{equation}\label{eq:Tsqrt}
T(s)\ \le\ \int_{|\tau|\le\tau_0}\frac{d\tau}{c_b\tau^{2}+\frac12 g(s)}+\int_{|\tau|\ge\tau_0}\frac{d\tau}{c_b\tau^{2}}\ \le\ C\bigl(1+g(s)^{-1/2}\bigr),
\end{equation}
with $C$ depending only on $b$ and $\kappa$.

As we announced at start, let $\phi$ be the flow generated by the vector field $\rho X$. Hence, the flow $\phi$ satisfies  $\sing(\phi)=S\cup \{0\}$. Also, the orbit of any point $x\in D\setminus \phi'(S\times \R)$ under $\phi$ retains the same orbit as its orbit under the flow $\phi'$, possibly after a change of speed. The remaining orbits correspond to the addition of singularities into each orbit under $\phi'$ that crosses $S$.
We name these branches as follows: For $p\in S\setminus\{x_0,z_0\}$ we have $W^-(p)\setminus \{p\}$, the branch negatively asymptotic to $\partial D$ and positively asymptotic to $p$, and $W^+(p)\setminus \{p\}$ the branch negatively asymptotic to $p$ and positively asymptotic to $0$. The orbit of $x_0$ splits into three parts outside fixed points: $A=W^-(x_0)\setminus \{x_0\}$, $B=W^+(x_0)\setminus \{x_0\}=W^-(z_0)\setminus\{z_0\}$ and $C=W^+(z_0)\setminus \{z_0\}$.
Observe that any point $x\notin \hat{Q}_1\cup \hat{Q}'_1$ keeps the same velocity until it enters  $\hat{Q}_1\cup \hat{Q}'_1$. So, all those points move with constant angular velocity, similarly to $\phi'$. In contrast, the points inside $\hat{Q}_1\cup \hat{Q}'_1$ have their speed changed according to the function $\rho$.

It remains to choose the heights.  Let \(T_2(s)\) denote the full
\(\phi\)-crossing time of \(\hat Q'_1\) on the vertical through
\(s\) with $g(s)>0$, and put
\[
 \widehat T_n:=T_2(z_n).
\]
Choose the heights \(\widehat h_n\), \(n\ge2\), recursively so that
\begin{equation}\label{eq:direct-slowdown}
	\widehat T_{n+1}\ge2^n\widehat T_n
	\qquad(n\ge1).
\end{equation}
Since \(T_2\) is minimized on each gap
\((y_k,y_{k-1})\) at \(z_k\), and
\((\widehat T_k)\) is increasing, we obtain
\begin{equation}\label{eq:tail-slowdown}
	\inf_{\substack{s\in[z_0,z_{n+1}]\\g(s)>0}}T_2(s)
	=\widehat T_{n+1}
	\ge2^n\widehat T_n
	\qquad(n\ge1).
\end{equation}
This recursion is compatible with
\(\widehat h_n\le\min\{1,\widehat w_n^n\}\), since
\[
T_2(z_n)\longrightarrow+\infty
\qquad\text{as }\widehat h_n\longrightarrow0.
\]
Thus, at each step, \(\widehat h_{n+1}\) can be chosen sufficiently
small to satisfy \eqref{eq:direct-slowdown} while preserving the
required bound. 
For each \(Q\in\{\hat Q_1,\hat Q'_1\}\), write
\[
\chi_Q=(s_Q,\tau_Q)\colon Q\longrightarrow
J_Q\times[-\kappa,\kappa]
\]
for its flow-box chart, and define
\[
q_Q^0(s):=q\bigl(\chi_Q^{-1}(s,0)\bigr).
\]
Since the two flow-box charts and the function \(q\) are smooth on
the two compact boxes, there exists \(L\ge1\) such that, for every
\(Q\in\{\hat Q_1,\hat Q'_1\}\) and all \(x,y\in Q\),
\[
\begin{aligned}
	|s_Q(x)-s_Q(y)|&\le Ld(x,y),\\
	|\tau_Q(x)-\tau_Q(y)|&\le Ld(x,y),\\
	|q_Q^0(s_Q(x))-q_Q^0(s_Q(y))|&\le Ld(x,y),\\
	|q(x)-q(y)|&\le Ld(x,y).
\end{aligned}
\]
Thus a jump of size at most \(\delta\) changes each of the four
quantities on the left by at most \(L\delta\).

\begin{claim}\label{cl:box-passage}
Fix \(T_0>0\) and \(A>0\) such that \(AT_0>4L\).  There exist
constants \(C,\delta_0>0\), depending only on \(T_0,A\) and the
restrictions of \(\phi\) to \(\hat Q_1\) and \(\hat Q'_1\), such that
the following holds.

Fix \(0<\delta\le\delta_0\) and let
\(P=((p_i,t_i))_{i\in\mathbb Z}\)
be a \(\delta\)-\(T_0\) pseudo-orbit such that
\(t_i\in[T_0,2T_0)\) for every \(i\).  Fix
\(Q\in\{\hat Q_1,\hat Q'_1\}\), and let \(j_1<j_2\) be such that
\[
\phi_{[0,t_i]}(p_i)\subset Q\setminus H_\delta^Q
\qquad\text{for every }j_1\le i<j_2,
\]
where
\[
 H_\delta^Q:=
 \bigl\{p\in Q:v(s_Q(p),\tau_Q(p))\le A\delta\bigr\}.
\]
Assume that the integer interval \([j_1,j_2)\) is maximal with this
property, and consider the corresponding subsegment of the pseudo-orbit
\[
P[j_1,j_2)
:=((p_i,t_i))_{j_1\le i<j_2}.
\]
Then the following assertions hold:
\begin{enumerate}
	\item\label{cl:box-passage:1} the sequence of transverse coordinates
	\[
	\tau_Q(p_{j_1}),\tau_Q(p_{j_1+1}),\ldots,
	\tau_Q(p_{j_2-1})
	\]
	is strictly increasing;
	
	\item\label{cl:box-passage:2} the length of \(P[j_1,j_2)\) satisfies $j_2-j_1\le C/\sqrt{\delta}$;
	
	\item\label{cl:box-passage:3} the total change of the transverse coordinate caused by the
	jumps within this subsegment satisfies
	\[
	\sum_{i=j_1}^{j_2-2}
	\left|
	\tau_Q(p_{i+1})-\tau_Q\bigl(\phi_{t_i}(p_i)\bigr)
	\right|
	\le C\sqrt{\delta}.
	\]
	\item\label{cl:box-passage:4}
If \(p_{j_2}\in Q\), then
\[
\sum_{i=j_1}^{j_2-1}
\left|
\tau_Q(p_{i+1})-\tau_Q(\phi_{t_i}(p_i))
\right|	\le C\sqrt{\delta}. 
\]
\end{enumerate}
\end{claim}

\begin{proof}
For \(j_1\le i<j_2-1\), the orbit segment lies outside
\(H_\delta^Q\), and therefore
\begin{equation}\label{eq:box-net-advance}
\begin{aligned}
 \tau_Q(p_{i+1})-\tau_Q(p_i)
 &\ge
 \tau_Q\bigl(\phi_{t_i}(p_i)\bigr)-\tau_Q(p_i)-L\delta\\
 &\ge (AT_0-L)\delta>3L\delta
\end{aligned}
\end{equation}
which proves \eqref{cl:box-passage:1}.

Let us now estimate the number of possible segments.  By
\eqref{eq:bquad} we have:
\begin{equation}
 v(s,\tau)\ge 1-b(\tau)\ge c_b\tau^2
 \quad (|\tau|\le\tau_0).\label{eq:bquad:cb}
\end{equation}
Fix $\delta\in (0,1)$ and divide \(0<|\tau|\le\tau_0\) into the following strips:
\[
 E_0=\{|\tau|\le K\sqrt\delta\}, \quad  E_j=\{2^{j-1}K\sqrt\delta<|\tau|
                \le2^jK\sqrt\delta\},\qquad j\ge1,
\]
where \(K\) is fixed and sufficiently large.  Estimate
\eqref{eq:box-net-advance} shows
that the number of elements in \(E_0\) is at most
\[
 2+\frac{2K\sqrt\delta}{(AT_0-L)\delta}
 \le \frac{C_0}{\sqrt\delta},
 \qquad
 C_0:=2+\frac{2K}{AT_0-L}.
\]
Put
\[
r_j=2^{j-1}K\sqrt\delta,\qquad
E_j^-=[-2r_j,-r_j),\qquad E_j^+=(r_j,2r_j].
\]
Call an orbit segment exceptional for \(E_j\) if it starts in
\(E_j^-\) and crosses the boundary at the level \(\tau=-r_j\).  Observe that there are at most two
such orbit segments.  Indeed, after the first crossing, the following jump
places the next initial point at a coordinate greater than
\(-r_j-L\delta\).  If the next segment also crosses \(-r_j\), then
\eqref{eq:box-net-advance} gives
\[
\tau_Q(p_{i+1})>-r_j+2L\delta,
\]
and the strict increase of the subsequent initial coordinates
precludes any further crossing of \(-r_j\).

For every nonexceptional orbit segment starting in \(E_j\), one has
\[
|\tau(\phi_t(p_i))|\ge r_j
\qquad(0\le t\le t_i).
\]
Consequently, for nonexceptional orbit segments we have by \eqref{eq:bquad:cb} that
\[
v\bigl(\phi_t(p_i)\bigr)
\ge c_b r_j^2
=c_bK^2\,4^{j-1}\delta
\qquad(0\le t\le t_i)
\]
which after taking jump into account gives
\[
\begin{aligned}
\tau_Q(p_{i+1})-\tau_Q(p_i)
	&\ge c_bK^2\,4^{j-1}\delta T_0-L\delta\\
	&\ge c'4^j\delta,
\end{aligned}
\]
where
\[
c'=\frac{c_bK^2T_0-L}{4}>0.
\]
Apart from the two exceptional points, the number of elements assigned to \(E_j\) is therefore at
most
\[
 \frac{2^{j+1}K\sqrt\delta}{c'4^j\delta}
 =\frac{2K}{c'2^{j-1}\sqrt\delta}.
\]
The number \(J_\delta\) of strips is at most
\(\tau_0/(K\sqrt\delta)+1\),  hence the total number of exceptional segments
is at most
\[
 2J_\delta\le
 \frac{2\tau_0/K+2}{\sqrt\delta}.
\]
and the maximal number of regular segments through all $E_j$ is bounded by:
\[
 \sum_{j=1}^{\infty}
 \frac{2K}{c'2^{j-1}\sqrt\delta}
 =\frac{4K}{c'\sqrt\delta}.
\]
Now let us provide an estimate for the region \(\{|\tau|\ge\tau_0\}\). Put $
 v_0:=\min\{v(s,\tau):|\tau|\ge\tau_0\}>0$
and note that if
\(\delta\le v_0T_0/(2L)\) then every complete step in this region increases $\tau$ by
at least \(v_0T_0/2\).  Thus the number of additional
elements arising when passing this region is at most
\[
 C_2:=2+\frac{4\kappa}{v_0T_0}
 \le\frac{C_2}{\sqrt\delta}.
\]
Introduce the following constants:
\[
\widetilde C:=C_0+\frac{2\tau_0}{K}+2+\frac{4K}{c'}+C_2,
 \qquad C:=\max\{1,L\}\widetilde C,
\]
and denote
\[
 \delta_0:=
 \min\left\{1,\frac{v_0T_0}{2L}\right\}.
\]
Every orbit segment starts either in \(E_0\), in one of the 
strips \(E_j\), or in \(\{|\tau|\ge\tau_0\}\).  Summing the estimates
for these three cases gives
\[
j_2-j_1
\le
\frac{C_0+2\tau_0/K+2+4K/c'+C_2}{\sqrt\delta}
=\frac{\widetilde C}{\sqrt\delta}
\le \frac{C}{\sqrt\delta},
\]
which proves \eqref{cl:box-passage:2}.

Put \(N=j_2-j_1\).  For every \(j_1\le i\le j_2-2\), both
\(\phi_{t_i}(p_i)\) and \(p_{i+1}\) belong to \(Q\).  Therefore,
by the Lipschitz estimate for the coordinate \(\tau_Q\),
\[
\left|
\tau_Q(p_{i+1})-\tau_Q\bigl(\phi_{t_i}(p_i)\bigr)
\right|
\le
Ld\bigl(p_{i+1},\phi_{t_i}(p_i)\bigr)
\le L\delta.
\]
The sum in \eqref{cl:box-passage:3} has \(N-1\) terms.  Hence,
using \eqref{cl:box-passage:2} we obtain:
\[
\sum_{i=j_1}^{j_2-2}
\left|
\tau_Q(p_{i+1})-\tau_Q\bigl(\phi_{t_i}(p_i)\bigr)
\right|
\le L\delta(N-1)
\le L\widetilde C\sqrt\delta
\le C\sqrt\delta.
\]

Under the hypothesis of \eqref{cl:box-passage:4}, one also has
\(p_{j_2}\in Q\).  Thus the same estimate applies to the additional
jump with index \(i=j_2-1\).  The extended sum has \(N\) terms, and
therefore
\[
\sum_{i=j_1}^{j_2-1}
\left|
\tau_Q(p_{i+1})-\tau_Q\bigl(\phi_{t_i}(p_i)\bigr)
\right|
\le L\delta N
\le L\widetilde C\sqrt\delta
\le C\sqrt\delta.
\]
This proves \eqref{cl:box-passage:3} and
\eqref{cl:box-passage:4}, completing the proof of the claim.
\end{proof}

Recall that
\[
 a\colon\operatorname{Int}D\setminus\{0\}\longrightarrow\mathbb S^1,\quad 
  q\colon\operatorname{Int}D\setminus\{0\}\longrightarrow\mathbb \R
\]
were defined by \eqref{eq:def:a} and \eqref{eq:def:q}
and that the orbit-label function $a$  is constant along the
orbits of \(\phi\).

\begin{claim}\label{cl:singular-neighbourhoods}
For every \(\varepsilon,\widetilde\varepsilon>0\) there exist \(T_0\ge1\),
\(\delta_0>0\), a closed neighborhood \(V_\partial\) of \(\partial D\), a
closed neighbourhood \(V_0\) of \(0\), closed neighbourhoods
\(W_\partial\supset V_\partial\) and \(W_0\supset V_0\), an integer
\(N\), and a family
\[
 \mathcal U=
 \{U_{x_i},U_{y_i}:1\le i\le N\}\cup\{U_{x_0},U_{z_0}\}
\]
of pairwise disjoint compact flow-box rectangles in
\(D\) such that
\[
 V_\partial\subset\operatorname{Int}W_\partial,
 \qquad V_0\subset\operatorname{Int}W_0,
\]
\[
 W_\partial\subset
 \left\{1-r<\frac{\varepsilon}{10}\right\},
 \qquad
 \operatorname{diam}(W_0)<\frac{\varepsilon}{10},
 \qquad
 (W_\partial\cup W_0)\cap
 \bigcup_{U\in\mathcal U}U=\emptyset,
\]
and the following assertions hold:
\begin{enumerate}
\item\label{cl:singular-neighbourhoods:1} every \(U\in\mathcal U\) satisfies
      \[
       \operatorname{diam}(U)<\frac{\varepsilon}{10},
       \qquad
       \operatorname{diam}_{\mathbb S^1}(a(U))
       <\frac{\widetilde\varepsilon}{10},
       \qquad
\operatorname{diam}q(U)<\frac{\pi}{10},
      \]
      and
      \[
      \begin{aligned}
       &x_i\in\operatorname{Int}U_{x_i},\qquad
       y_i\in\operatorname{Int}U_{y_i}\quad(1\le i\le N),\\
       &\{x_0\}\cup\{x_j:j>N\}\subset\operatorname{Int}U_{x_0},\\
       &\{z_0\}\cup\{y_j,z_j:j>N\}\subset\operatorname{Int}U_{z_0};
      \end{aligned}
      \]
\item\label{cl:singular-neighbourhoods:2} if \(0<\delta\le\delta_0\),
      \(P=((p_i,t_i))_{i\in\mathbb Z}\) is a
      \(\delta\)-\(T_0\) pseudo-orbit with
      \(t_i\in[T_0,2T_0)\), and \([j_1,j_2)\) is maximal subject to
      \[
       \phi_{[0,t_i]}(p_i)\subset
       R:=D\setminus
       \left(V_\partial\cup V_0\cup
       \bigcup_{U\in\mathcal U}U\right)
       \quad(j_1\le i<j_2),
      \]
      then
      \[
       q(p_{j_1})<q(p_{j_1+1})<\cdots<q(p_{j_2-1})
      \]
      and
      \[
       \sum_{i=j_1}^{j_2-2}
       d_{\mathbb S^1}\!\left(
       a(p_{i+1}),a\bigl(\phi_{t_i}(p_i)\bigr)\right)
       <\widetilde\varepsilon;
      \]
\item\label{cl:singular-neighbourhoods:3} Define
      \[
       S_0=0,\qquad
       S_i=\sum_{k=0}^{i-1}t_k\ (i>0),\qquad
       S_i=-\sum_{k=i}^{-1}t_k\ (i<0),
      \]
      and define the interpolated pseudo-orbit by
      \[
       \gamma_P(t)=\phi_{t-S_i}(p_i),
       \qquad t\in[S_i,S_{i+1}).
      \]
      For \(U\in\mathcal U\), a visit to \(U\) is a connected component of
      \(\gamma_P^{-1}(U)\).
      
      The following assertions hold:
      \begin{enumerate}[(a)]
      \item\label{cl:singular-neighbourhoods:3:a} For every \(i<j\),
      \begin{eqnarray*}
       p_i\notin V_\partial
       &\quad\Longrightarrow\quad&
       p_j\notin V_\partial,\\
       p_i\in V_0
       &\quad\Longrightarrow\quad&
       p_j\in V_0.
      \end{eqnarray*}
      Thus the sequence of initial points leaves \(V_\partial\) and
      enters \(V_0\) at most once.

      \item\label{cl:singular-neighbourhoods:3:b}
      Put
      \[
       \mathcal U_P
       :=\{U\in\mathcal U:\gamma_P(\mathbb R)\cap U\ne\emptyset\}.
      \]
      Then there are only the following three possibilities:
      \[
       \mathcal U_P=\emptyset,\qquad
       \mathcal U_P=\{U\}\ \ (U\in\mathcal U),
       \qquad\text{or}\qquad
       \mathcal U_P=\{U_{x_0},U_{z_0}\}.
      \]
      In the last case,
      \[
       \gamma_P(t)\in U_{x_0},\quad \gamma_P(t')\in U_{z_0}
       \quad\Longrightarrow\quad t<t'.
      \]
      Moreover, for \(U\in\mathcal U_P\),
      \[
      \begin{aligned}
       \gamma_P(t)\in V_\partial,
       \ \gamma_P(t')\in U&\quad\Longrightarrow\quad t<t',\\
       \gamma_P(t)\in U,
       \ \gamma_P(t')\in V_0&\quad\Longrightarrow\quad t<t'.
      \end{aligned}
      \]
Finally, no member of \(\mathcal U\) has two distinct visits
      which are respectively unbounded below and unbounded above.

      \item\label{cl:singular-neighbourhoods:3:c} There is a nonempty open arc
      \(I_P\subset\mathbb S^1\) such that
      \[
       I_P\subset a(\operatorname{Int}U)
       \qquad\text{for every \(U\in\mathcal U_P\)},
      \]
      and, for every \(\hat{a}\in I_P\),
      \[
       d_{\mathbb S^1}\bigl(a(\gamma_P(t)),\hat{a}\bigr)
       <\widetilde\varepsilon
       \qquad\text{whenever }\gamma_P(t)\in R.
      \]
      \end{enumerate}
\end{enumerate}
\end{claim}

\begin{proof}
 Choose
\[
 0<\nu_\partial<\frac1{12},\qquad
 0<\nu_0<\frac16
\]
sufficiently small, so that the closed sets
\[
 V_\partial=\{(r,\theta): 1-r \le2\nu_\partial\},
 \qquad V_0=\{(r,\theta) : r\le2\nu_0\}
\]
satisfy
\[
 V_\partial\subset\left\{1-r<\frac{\varepsilon}{10}\right\},
 \qquad \operatorname{diam}(V_0)<\frac{\varepsilon}{10}.
\]
We also require the larger neighborhoods
\[
 W_\partial=\{(r,\theta): 1-r\le3\nu_\partial\},
 \qquad W_0=\{r\le3\nu_0\}
\]
to be disjoint from \(\hat Q_1\cup\hat Q'_1\), and choose the
constants so that
\[
 W_\partial\subset
 \left\{(r,\theta) : 1-r<\frac{\varepsilon}{10}\right\},
 \qquad \operatorname{diam}(W_0)<\frac{\varepsilon}{10}.
\]

Choose \(N\ge1\) and put
\[
 X_N=\{x_0\}\cup\{x_j:j>N\},\qquad
 Z_N=\{z_0\}\cup\{y_j,z_j:j>N\},
\]
where \(N\) is so large that
\begin{equation}\label{eq:claim2-tail-smallness}
 \operatorname{diam}(X_N),\operatorname{diam}(Z_N)
 <\frac{\varepsilon}{20},\qquad
 \operatorname{diam}_{\mathbb S^1}
 \bigl(a(X_N)\cup a(Z_N)\bigr)
 <\frac{\widetilde\varepsilon}{100}.
\end{equation}
Let
\[
 \mathcal A_N=\{x_0,x_1,\ldots,x_N,y_1,\ldots,y_N,z_0\}
\]
and define
\[
 E_{x_i}=\{x_i\},\quad E_{y_i}=\{y_i\}\quad(1\le i\le N),
 \qquad E_{x_0}=X_N,\quad E_{z_0}=Z_N.
\]
For \(\alpha\in\mathcal A_N\), put \(\Lambda_\alpha=a(E_\alpha)\).
The compact sets \(\Lambda_\alpha\) and \(\Lambda_\beta\) are
disjoint whenever \(\alpha\ne\beta\) and
\(\{\alpha,\beta\}\ne\{x_0,z_0\}\).  Hence
\begin{equation}\label{eq:claim2-cluster-separation}
 \Delta:=
 \min_{\substack{\alpha\ne\beta\\
        \{\alpha,\beta\}\ne\{x_0,z_0\}}}
 \operatorname{dist}_{\mathbb S^1}
 \bigl(\Lambda_\alpha,\Lambda_\beta\bigr)>0.
\end{equation}
Set
\[
 \eta_a=\min\left\{\frac{\widetilde\varepsilon}{20},
                    \frac{\Delta}{20}\right\}.
\]

For each \(\alpha\in\mathcal A_N\), choose open arcs
\(J_\alpha^-\Subset J_\alpha^+\) such that
\begin{equation}\label{eq:claim2-label-arcs}
 \Lambda_\alpha\subset J_\alpha^-,\qquad
 \operatorname{diam}_{\mathbb S^1}(J_\alpha^+)
 <\frac{\widetilde\varepsilon}{10},
\end{equation}
and
\begin{equation}\label{eq:claim2-incompatible-arcs}
 \operatorname{dist}_{\mathbb S^1}
 \bigl(\overline{J_\alpha^+},\overline{J_\beta^+}\bigr)
 \ge\frac{\Delta}{2}
\end{equation}
for every pair occurring in the minimum in
\eqref{eq:claim2-cluster-separation}.  In view of
\eqref{eq:claim2-tail-smallness}, we may also arrange that
\begin{equation}\label{eq:claim2-common-label-arc}
 J_{x_0}^-=J_{z_0}^-=:J_\infty.
\end{equation}
Choose pairwise disjoint compact flow-box rectangles \(U_\alpha\)
such that
\begin{equation}\label{eq:claim2-neighbourhood-choice}
 E_\alpha\subset\operatorname{Int}U_\alpha,\qquad
 J_\alpha^-\subset a(\operatorname{Int}U_\alpha),\qquad
 a(U_\alpha)\subset J_\alpha^+,
 \qquad \diam (U_\alpha)<\frac{\varepsilon}{10},
 \qquad \diam q(U_\alpha)<\frac{\pi}{10}.
\end{equation}
They may be chosen so small that
\begin{equation}\label{eq:claim2-q-order}
 d_q:=\min_{U_{z_0}}q-\max_{U_{x_0}}q>0.
\end{equation}
Let
\[
 \mathcal U=
 \{U_{x_i},U_{y_i}:1\le i\le N\}
 \cup\{U_{x_0},U_{z_0}\}.
\]
This proves part \eqref{cl:singular-neighbourhoods:1}.

Define
\[
 R=D\setminus\left(V_\partial\cup V_0\cup
                    \bigcup_{U\in\mathcal U}U\right),
 \qquad K=\overline R.
\]
The compact set \(K\) contains no singularity and is contained in
\(\operatorname{Int}D\setminus\{0\}\).  Therefore
\[
 m:=\min_K\dot q>0,
 \qquad \ell:=\max_Kq-\min_Kq<\infty.
\]
Since \(a\) and \(q\) are smooth on the compact annulus
\[
 K_V=\{(r,\theta) : 1-r\ge\nu_\partial,\ r\ge\nu_0\}
\]
there exists \(\widehat L\ge1\) such that
\begin{equation}\label{eq:claim2-aq-Lipschitz}
 d_{\mathbb S^1}(a(p),a(p'))\le \widehat Ld(p,p'),
 \qquad |q(p)-q(p')|\le \widehat Ld(p,p')
\end{equation}
for all \(p,p'\in K_V\).

Fix \(T_0\ge1\) and put
\[
 B=2+\frac{2\ell}{mT_0}.
\]
Put \(\varrho=1-r\).  Since both \(r\) and \(\varrho\) are
\(1\)-Lipschitz, choose \(\delta_0>0\) so small that
\begin{equation}\label{eq:claim2-delta-choice}
 \delta_0\le
 \min\left\{1,\frac{mT_0}{2\widehat L},
                \frac{d_q}{4\widehat L}\right\},
\end{equation}
\begin{equation}\label{eq:claim2-smallness}
 \widehat LB\delta_0<\frac{\eta_a}{4},
 \qquad 4\widehat L\delta_0<\frac{\eta_a}{4},
\end{equation}
and
\begin{equation}\label{eq:claim2-collar-smallness}
\begin{aligned}
 \delta_0&<
 \min\{\nu_\partial,
        \dfrac{2(e^{T_0}-1)}{e^{T_0}+1}\nu_\partial\},\\
 \delta_0&<
 \min\{2(1-e^{-3T_0/4})\nu_0,
        (2-3e^{-3T_0/4})\nu_0,\nu_0\}.
\end{aligned}
\end{equation}

We first prove part \eqref{cl:singular-neighbourhoods:2}.  If
\([j_1,j_2)\) is as in that part and \(j_1\le i\le j_2-2\), then
both endpoints of the jump belong to \(K_V\).  Since
\(\dot q\ge m\) on the orbit segment,
\begin{equation}\label{eq:claim2-q-increase}
\begin{aligned}
 q(p_{i+1})-q(p_i)
 &=q(p_{i+1})-q(\phi_{t_i}(p_i))
   +q(\phi_{t_i}(p_i))-q(p_i)\\
 &\ge-\widehat L\delta+mT_0
 \ge\frac12mT_0>0.
\end{aligned}
\end{equation}
Thus the displayed sequence in part
\eqref{cl:singular-neighbourhoods:2} is strictly increasing.  Since
its values lie in an interval of length \(\ell\), the block contains
at most \(B\) orbit segments.  The function \(a\) is constant on
each orbit segment, so
\begin{equation}\label{eq:claim2-label-block}
 \sum_{i=j_1}^{j_2-2}
 d_{\mathbb S^1}\!\left(
 a(p_{i+1}),a(\phi_{t_i}(p_i))\right)
 \le \widehat LB\delta<\frac{\eta_a}{4}.
\end{equation}
This proves part \eqref{cl:singular-neighbourhoods:2}.

We next establish the order of visits in part
\eqref{cl:singular-neighbourhoods:3:a}.  On the two auxiliary neighborhoods $W_\partial, W_0$,
the there was no change of the vector field.  Hence
\[
 \dot\varrho=\varrho(1-\varrho)(2-\varrho)\ge\varrho
 \quad\text{on }W_\partial \quad \text{  and }
 \quad
 \dot r=-r(1-r^2)\le-\frac34r
 \quad\text{on }W_0.
\]
Consequently, if an orbit segment remains in \(W_\partial\), then
\[
\frac{d}{dt}\left(
e^{-t}\varrho(\phi_t(p_i))
\right)
=
e^{-t}\left(
\dot\varrho(\phi_t(p_i))
-
\varrho(\phi_t(p_i))
\right)
\ge0
\]
hence \(t\mapsto e^{-t}\varrho(\phi_t(p_i))\) is nondecreasing.
If we denote $f(t)=\varrho(\phi_t(p_i))$ then $f'(t)/f(t)\geq 1$ and so
$$
\log(\varrho(\phi_{t_i}(p_i)))-\log(\varrho(p_i))=\int_{0}^{t_i}\frac{f'(t)}{f(t)}dt\geq \int_{0}^{t_i} 1 dt=t_i.
$$
which gives
\begin{equation}\label{eq:claim2-boundary-growth}
 \varrho(\phi_t(p))\ge e^t\varrho(p).
\end{equation}
Similarly, an orbit segment contained in \(W_0\) satisfies
\begin{equation}\label{eq:claim2-sink-contraction}
 r(\phi_t(p))\le e^{-3t/4}r(p).
\end{equation}

Suppose \(p_i\notin V_\partial\).  If its orbit segment stays in
\(W_\partial\), then \eqref{eq:claim2-boundary-growth} and
\eqref{eq:claim2-collar-smallness} give
\[
 \varrho(p_{i+1})
 \ge e^{T_0}\varrho(p_i)-\delta
 >2\nu_\partial.
\]
If the segment leaves \(W_\partial\), or starts outside it, then
radial monotonicity gives
\(\varrho(\phi_{t_i}(p_i))\ge3\nu_\partial\), and the same
conclusion follows.  Induction proves that no later initial point
belongs to \(V_\partial\).

If a segment leaves \(V_\partial\) but the following jump returns
the next initial pseudo-orbit point to it, then
\(\varrho(p_{i+1})\ge2\nu_\partial-\delta\).  The next complete
segment therefore satisfies
\[
 \varrho(p_{i+2})
 \ge e^{T_0}(2\nu_\partial-\delta)-\delta
 >2\nu_\partial
\]
by \eqref{eq:claim2-collar-smallness}.  Thus the interpolated
pseudo-orbit can return across the inner boundary of $V_\partial$ at most once.

If \(p_i\in V_0\), then \eqref{eq:claim2-sink-contraction} gives
\[
 r(p_{i+1})
 \le2e^{-3T_0/4}\nu_0+\delta<2\nu_0.
\]
Thus every later initial point belongs to \(V_0\).  Moreover, once
an orbit segment meets \(V_0\), its next initial point lies in
\(W_0\). The following complete segment and
\eqref{eq:claim2-collar-smallness} place the next initial point in
\(V_0\).  Since \(\bigcup\mathcal U\) is disjoint from \(W_0\), no
visit to a member of \(\mathcal U\) can occur after a visit to
\(V_0\).  The analogous argument at the boundary shows that, after
a visit to a member of \(\mathcal U\), no visit to \(V_\partial\)
is possible.  This proves part
\eqref{cl:singular-neighbourhoods:3:a}, including its two assertions
about the interpolated pseudo-orbit.

For part \eqref{cl:singular-neighbourhoods:3:b}, let visits to
distinct sets \(U_\alpha,U_\beta\) be consecutive, and denote by
\(u_\alpha\in U_\alpha\) and \(u_\beta\in U_\beta\) the exit and
entry limit points.  The intervening part of the interpolated
pseudo-orbit meets neither collar, by the collar order just proved,
and meets no member of \(\mathcal U\), by consecutiveness.  Its
complete orbit segments therefore form a block in \(R\).  Formula
\eqref{eq:claim2-label-block}, together with at most two end jumps,
gives
\begin{equation}\label{eq:claim2-transition-drift}
 d_{\mathbb S^1}(a(u_\alpha),a(u_\beta))
 <\frac{\eta_a}{4}+2\widehat L\delta<\frac{\eta_a}{2}.
\end{equation}
For an incompatible pair, however,
\eqref{eq:claim2-incompatible-arcs} and
\eqref{eq:claim2-neighbourhood-choice} give the lower bound
\(\Delta/2\), a contradiction.  Thus the only possible distinct
pair is \(U_{x_0},U_{z_0}\).

This pair cannot occur in the reverse order.  Indeed, the same
decomposition, now applied to \(q\), and
\eqref{eq:claim2-q-increase} give
\[
 q(u_{x_0})-q(u_{z_0})\ge-2\widehat L\delta>-d_q,
\]
whereas \eqref{eq:claim2-q-order} gives
\(q(u_{x_0})-q(u_{z_0})\le-d_q\).  It follows that distinct visited
sets occur only in the order
\(U_{x_0}\) followed by \(U_{z_0}\), and there can be no return to
\(U_{x_0}\).  Together with the collar order, this proves all but
the final assertion of part
\eqref{cl:singular-neighbourhoods:3:b}.

For the remaining assertion, recall that every
\(U\in\mathcal U\) is a flow-box rectangle and that
\(\diam q(U)<\pi/10\).  A pseudo-orbit which leaves a
visit unbounded below exits through the positive transverse side of
\(U\). To begin a distinct visit unbounded above, it must later
enter through the negative transverse side.  A direct return across
the same box is excluded by the positive transverse direction and,
after reducing \(\delta_0\), by the positive distance between the
two transverse sides.  Any other return contains a full turn around
the origin, during which \(q\) increases by \(2\pi\).  Formula
\eqref{eq:claim2-q-increase}, together with the estimates for the two
end jumps, shows, after one further reduction of \(\delta_0\), that
the corresponding increase along the pseudo-orbit is greater than
\(\pi\).  This is impossible when both endpoints lie in a set of
\(q\)-diameter less than \(\pi/10\).  Thus the two terminal visits cannot be distinct, completing part
\eqref{cl:singular-neighbourhoods:3:b}.

It remains to prove part
\eqref{cl:singular-neighbourhoods:3:c}.  Suppose first that
\(\mathcal U_P=\emptyset\).  If the interpolated pseudo-orbit does
not meet \(R\), the required estimate is vacuous, and \(I_P\) may be
any nonempty open arc.  Otherwise, choose \(s\in\mathbb R\) such that
\(\gamma_P(s)\in R\), and put
\[
u:=\gamma_P(s).
\]
Let \(t\in\mathbb R\) be arbitrary subject to
\(\gamma_P(t)\in R\).  Interchanging \(s\) and \(t\), if necessary,
we may assume that \(s\le t\).

By part \eqref{cl:singular-neighbourhoods:3:a}, the portion of the
interpolated pseudo-orbit between the times \(s\) and \(t\) contains
at most two orbit segments involved in leaving \(V_\partial\) and
at most one orbit segment involved in entering \(V_0\).  All the
remaining complete orbit segments form a consecutive block
contained in \(R\).  Since \(a\) is constant along every orbit
segment, the exceptional segments themselves cause no change of
the orbit label.  The jumps within the block contribute at most
\(\widehat L B\delta\), by
\eqref{eq:claim2-label-block}.  Besides the jumps within this block, there are at most three further
jumps: the possible jump returning the pseudo-orbit to
\(V_\partial\), the jump following its final departure from
\(V_\partial\), and the jump preceding its entrance into \(V_0\).
Each of them changes \(a\) by at most \(\widehat L\delta\).

The buffer provided by \(W_\partial\) and \(W_0\), together with
\eqref{eq:claim2-collar-smallness}, ensures that both endpoints of
each of these additional jumps belong to \(K_V\).  Hence each such
jump changes \(a\) by at most \(\widehat L\delta\).  Consequently,
using \(\delta\le\delta_0\) and
\eqref{eq:claim2-smallness}, we obtain
\begin{equation}\label{eq:claim2-empty-label-bound}
		d_{\mathbb S^1}\bigl(a(u),a(\gamma_P(t))\bigr)
		\le \widehat L B\delta+4\widehat L\delta \le \widehat L B\delta_0+4\widehat L\delta_0 <\frac{\eta_a}{4}+\frac{\eta_a}{4}
		=\frac{\eta_a}{2}.
\end{equation}
Choose a nonempty open arc \(I_P\) about \(a(u)\) such that
\[
 d_{\mathbb S^1}(\widehat a,a(u))<\frac{\eta_a}{2}
 \qquad(\widehat a\in I_P).
\]
Then \eqref{eq:claim2-empty-label-bound} gives the required estimate.
If the interpolated pseudo-orbit does not meet \(R\),
take any nonempty open arc.

Suppose \(\mathcal U_P\ne\emptyset\), and let
\(\gamma_P(t)\in R\).  By the order already proved, the component
of \(\gamma_P^{-1}(R)\) containing \(t\) has an endpoint adjacent
to a visit to some \(U_\alpha\in\mathcal U_P\).  If
\(u_\alpha\in U_\alpha\) is the corresponding one-sided limit, then
the same block estimate gives
\begin{equation}\label{eq:claim2-label-to-neighbourhood}
 d_{\mathbb S^1}
 \bigl(a(\gamma_P(t)),a(u_\alpha)\bigr)
 <\frac{\eta_a}{4}+2\widehat L\delta<\frac{\eta_a}{2}.
\end{equation}
If \(\mathcal U_P=\{U_\alpha\}\), choose
\[
 \Lambda_\alpha\subset I_P\subset J_\alpha^-.
\]
If \(\mathcal U_P=\{U_{x_0},U_{z_0}\}\), choose
\[
 \Lambda_{x_0}\cup\Lambda_{z_0}
 \subset I_P\subset J_\infty.
\]
In either case \(I_P\subset a(\operatorname{Int}U)\) for every
\(U\in\mathcal U_P\).  Moreover, for \(\widehat a\in I_P\), both
\(a(u_\alpha)\) and \(\widehat a\) lie in \(J_\alpha^+\).  Therefore
\[
 d_{\mathbb S^1}
 \bigl(a(\gamma_P(t)),\widehat a\bigr)
 <\frac{\eta_a}{2}+\frac{\widetilde\varepsilon}{10}
 <\widetilde\varepsilon.
\]
Together with \eqref{eq:claim2-empty-label-bound}, this proves part
\eqref{cl:singular-neighbourhoods:3:c} and completes the proof.
\end{proof}

\begin{claim}\label{cl:oriented-corrected}
The flow \(\phi\) has the oriented shadowing property.
\end{claim}

\begin{proof}
Fix \(\varepsilon>0\).  We first record a direct estimate in the
orbit coordinates.  Since
\[
 r(q)=\frac{1}{\sqrt{1+e^{2q}}},\qquad
 \theta=a+q\pmod {2\pi},
\]
and \(\lvert r'(q)\rvert=r(q)(1-r(q)^2)\le1\), the mean value
theorem gives, for all
\(x,y\in\operatorname{Int}D\setminus\{0\}\),
\begin{equation}\label{eq:claim3-coordinate-distance}
\begin{aligned}
	d(x,y)
	&=|r_xe^{i\theta_x}-r_ye^{i\theta_y}|\le |r_x-r_y|
	+r_y|e^{i\theta_x}-e^{i\theta_y}|\\
	&\le |r_x-r_y|
	+d_{\mathbb S^1}(\theta_x,\theta_y)\\
&\le d_{\mathbb S^1}(a(x),a(y))
	+2|q(x)-q(y)|.
\end{aligned}
\end{equation}

Using the constant \(L\) fixed before Claim \ref{cl:box-passage}, put
$
 \varepsilon_1:=\min\left\{\varepsilon,
                    \frac{\varepsilon}{10L}\right\}.
$
Choose
$
 0<\widetilde\varepsilon<\frac{\varepsilon}{20}
$
and apply Claim \ref{cl:singular-neighbourhoods} with
\(\varepsilon_1\) and \(\widetilde\varepsilon\).  In particular,
for every \(U\in\mathcal U\),
\begin{equation}\label{eq:claim3-U-smallness}
 \diam (U)<\frac{\varepsilon}{10},\qquad
 \diam q(U)
 \le L\diam (U)<\frac{\varepsilon}{100}.
\end{equation}
Since \(q\) is smooth on the compact set
\[
 D\setminus
 \bigl(\operatorname{Int}V_\partial\cup
       \operatorname{Int}V_0\bigr),
\]
and \(\theta\) is smooth on the compact collar \(W_\partial\), there
exists \(\widehat L\ge1\) such that \(q\) is \(\widehat L\)-Lipschitz
on the first set and
\[
 d_{\mathbb S^1}(\theta(p),\theta(p'))
 \le \widehat Ld(p,p')
 \qquad \text{for }p,p'\in W_\partial.
\]
Choose \(A>0\) with \(AT_0>4L\), and let decrease $\delta_0$ when necessary, so that it is smaller than
\(\delta_0>0\) supplied by Claim
\ref{cl:box-passage}.  Decrease \(\delta_0\), if
necessary again, so that
\begin{equation}\label{eq:claim3-smallness}
 20\widehat L\delta_0<\frac{\varepsilon}{10},\qquad
 \widehat L\delta_0<
 \min\left\{\frac{T_0}{2},\frac{\varepsilon}{10}\right\},
 \qquad 
 \delta_0<\frac{\varepsilon}{10}.
\end{equation}

Fix \(0<\delta\le\delta_0\), and let
\(\mathcal P=((p_i,t_i))_{i\in\mathbb Z}\) be a
\(\delta\)-\(T_0\) pseudo-orbit.  Subdivide the segment of duration
\(t_i\) into
\[
 m_i:=\left\lfloor\frac{t_i}{T_0}\right\rfloor
\]
segments of duration \(t_i/m_i\).  Since
\[
 T_0\le\frac{t_i}{m_i}<2T_0,
\]
the refined sequence has all durations in \([T_0,2T_0)\). The
inserted jumps are zero and so the interpolated pseudo-orbit is still
\(\gamma_{\mathcal P}\).  We use this refinement below.
If \(\gamma_{\mathcal P}(\mathbb R)\subset V_0\), then the fixed
point \(0\) traces \(\mathcal P\) with \(h(t)=t\), because
\(\diam (V_0)<\varepsilon/10\).  We henceforth exclude
this case.

We next select the tracing orbit.  A visit to \(U\in\mathcal U\) is
called past terminal, respectively future terminal, if the
corresponding component of \(\gamma_{\mathcal P}^{-1}(U)\) is
unbounded below, respectively above.  By part \eqref{cl:singular-neighbourhoods:3:b} of Claim
\ref{cl:singular-neighbourhoods}, the visited sets form one of the choices
$
 \emptyset,\qquad \{U\},\qquad
 \{U_{x_0},U_{z_0}\},
$
and in the last case every visit to \(U_{x_0}\) precedes every visit
to \(U_{z_0}\).

The set \(E:=a(S)\) is countable.  If there is no terminal visit,
choose
\[
 \widehat a\in I_{\mathcal P}\setminus E
\]
and take the regular orbit with label \(\widehat a\).  If only one
set \(U\) is visited and a visit is terminal only in the past, only
in the future, or the same visit is terminal in both directions,
take, respectively, the outgoing branch, the incoming branch, or the fixed point of a
singularity in \(U\).  In a tail set \(U_{x_0}\) or \(U_{z_0}\), use
a sufficiently far member of the corresponding accumulating family.
The final assertion of part
\eqref{cl:singular-neighbourhoods:3:b} excludes distinct past-terminal and
future-terminal visits to the same \(U\).

Suppose now that both \(U_{x_0}\) and \(U_{z_0}\) are visited.  If
neither extreme visit is terminal, use the regular orbit just
described.  If the visit to \(U_{x_0}\) is past terminal but the
visit to \(U_{z_0}\) is not future terminal, use the outgoing branch
of \(x_n\) for sufficiently large \(n\). This branch meets
\(U_{z_0}\) at \(z_n=P(x_n)\).  If the visit to \(U_{z_0}\) is future
terminal but the visit to \(U_{x_0}\) is not past terminal, use the
incoming branch of \(y_n\) for sufficiently large \(n\), so that its preceding
intersection with \(J\) is \(P^{-1}(y_n)\in U_{x_0}\).  If both
extreme visits are terminal, use the trajectory
$$
B=W^+(x_0)\setminus\{x_0\}
   =W^-(z_0)\setminus\{z_0\}.
   $$
These cases are exhaustive.  Denote the selected orbit by
\(\mathcal O\).  It meets the visited sets in their prescribed order
and has the required asymptotic endpoint at every terminal visit.

If \(\mathcal O\) is fixed, its visit is terminal in both directions.
Hence the corresponding component of
\(\gamma_{\mathcal P}^{-1}(U)\) is all of \(\mathbb R\), so
\(\gamma_{\mathcal P}(\mathbb R)\subset U\).  The singular point in
\(U\) then traces \(\mathcal P\) with \(h(t)=t\), by
\eqref{eq:claim3-U-smallness}.  We may therefore assume that
\(\mathcal O\) is not a fixed point, and choose \(z\in\mathcal O\).  If \(\mathcal O\) is the regular orbit with label
\(\widehat a\in I_{\mathcal P}\), then part \eqref{cl:singular-neighbourhoods:3:c} of Claim
\ref{cl:singular-neighbourhoods} gives
\[
d_{\mathbb S^1}
\bigl(a(\gamma_{\mathcal P}(t)),a(z)\bigr)
<\widetilde\varepsilon
\qquad\text{whenever }\gamma_{\mathcal P}(t)\in R.
\]
Suppose instead that \(\mathcal O\) is an incoming or outgoing
singular branch, or the branch \(B\) from \(x_0\) to \(z_0\).
Choose any \(\widehat a\in I_{\mathcal P}\).  For every neighbourhood
\(U\in\mathcal U_{\mathcal P}\) met by \(\mathcal O\), both
\(a(z)\) and \(\widehat a\) belong to \(a(U)\).  Hence
$
d_{\mathbb S^1}\bigl(a(z),\widehat a\bigr)
<\frac{\widetilde\varepsilon}{10}.
$
Therefore, whenever \(\gamma_{\mathcal P}(t)\in R\),
\begin{equation}\label{eq:claim3-label-bound}
	d_{\mathbb S^1}
	\bigl(a(\gamma_{\mathcal P}(t)),a(z)\bigr)
   \le
	d_{\mathbb S^1}
	\bigl(a(\gamma_{\mathcal P}(t)),\widehat a\bigr)
	+
	d_{\mathbb S^1}\bigl(\widehat a,a(z)\bigr)<
	\widetilde\varepsilon+\frac{\widetilde\varepsilon}{10}
	<2\widetilde\varepsilon.
\end{equation}

We now construct the reparametrization on the interval between the
neighborhoods $V_0$ and $V_\partial$.  The case \(p_i\in V_\partial\) for every \(i\) will be
treated separately below.  Otherwise, part
\eqref{cl:singular-neighbourhoods:3:a} shows that the set
\(\{i:p_i\in V_\partial\}\) is either empty or has the largest element
\(i_\partial\).  In the latter case, let \(b_\partial\) be the first
time at or after \(S_{i_\partial}\) at which the interpolated
pseudo-orbit is outside \(V_\partial\). In the former case put
\(b_\partial=-\infty\).
Similarly, let \(b_0\) be the first time at which the interpolated
pseudo-orbit meets \(V_0\), and put \(b_0=+\infty\) if it never does.
The $V_0$ case in the proof of Claim
\ref{cl:singular-neighbourhoods} gives
\(\gamma_{\mathcal P}([b_0,+\infty))\subset W_0\).  Define
\[
 \mathcal T=[b_\partial,b_0],
\]
with an infinite endpoint case omitted at this point.  The estimates in the
proof of Claim \ref{cl:singular-neighbourhoods} show that
\(b_\partial<b_0\).  Apart from its endpoints, \(\mathcal T\) is
decomposed, in chronological order, into the components on which
\(\gamma_{\mathcal P}\) lies in \(R\) and the visits to members of
\(\mathcal U\).

We shall use the following consequence of Claims
\ref{cl:box-passage} and \ref{cl:singular-neighbourhoods}.  Each
\(U\in\mathcal U\) is contained in one of the two fixed boxes, so we denote
the corresponding box by \(Q(U)\).  Choose a compact flow-box rectangle \(U^-\)
such that
\[
 \operatorname{Sing}(\phi)\cap U
 \subset\operatorname{Int}U^-
 \subset U^-\subset\operatorname{Int}U.
\]
The set
\[
 \overline{U\setminus U^-}
\]
is compact and contains no singularity.  Hence \(\dot q\) has a
positive minimum there.  Since each
\(H_\delta^{Q(U)}\) decreases to the zero set of the vector field in
\(Q(U)\) as \(\delta\to0\), we may reduce \(\delta_0\) again, so that
\[
 H_\delta^{Q(U)}\cap U\subset\operatorname{Int}U^-,
 \qquad
 L\delta_0<
 \frac12\operatorname{dist}(U^-,D\setminus U).
\]
On the set \(U\setminus\operatorname{Int}U^-\), every orbit
segment lies outside \(H_\delta^{Q(U)}\).  Part
\eqref{cl:box-passage:1} gives a positive transverse advance at each
complete step there, while parts \eqref{cl:box-passage:2}--\eqref{cl:box-passage:4} bound the total number and total transverse
increase of the intervening jumps.  In particular, any pseudo-orbit can cross the
set  \(U\setminus\operatorname{Int}U^-\), only finitely many times.  It follows that the number of crossing of orbit segments with 
\(U\) is infinite, then the pseudo-orbit remains in \(U\) in one
time direction and the corresponding visit is terminal.

Denote
\[
 d_U:=\max_{U\in\mathcal U}\diam q(U)
 <\frac{\varepsilon}{100}.
\]
On a component \(\mathcal I\) of
\(\gamma_{\mathcal P}^{-1}(R)\), every internal jump satisfies
\begin{equation}\label{eq:claim3-q-jump}
 \left|q(p_{i+1})-
 q\bigl(\phi_{t_i}(p_i)\bigr)\right|\le \widehat L\delta.
\end{equation}
Furthermore, part {\rm(2)} of Claim
\ref{cl:singular-neighbourhoods} gives
\(q(p_{i+1})>q(p_i)\) and a positive lower bound, independent of
\(\delta\), for the \(q\)-increase on every complete orbit segment
contained in \(R\).

For every internal jump in \(\mathcal I\), choose disjoint one-sided
time intervals on which \(q\circ\gamma_{\mathcal P}\) changes by
\(2\widehat L\delta\).  This is possible after a further reduction of
\(\delta_0\).  Replace \(q\circ\gamma_{\mathcal P}\) on the union of
the two intervals by the affine function joining their outer endpoint
values.  The value at the right endpoint exceeds the value at the
left endpoint by at least
\[
 -\widehat L\delta+4\widehat L\delta=3\widehat L\delta.
\]
Thus the corrected function is continuous and strictly increasing on
\(\mathcal I\), and
\begin{equation}\label{eq:claim3-q-correction}
 \left|\widetilde q(t)-
 q\bigl(\gamma_{\mathcal P}(t)\bigr)\right|\le5\widehat L\delta
 \qquad(t\in\mathcal I)
\end{equation}
away from the two ends of \(\mathcal I\).

We choose the endpoint corrections and define \(\widetilde q\) on
the visits as follows.  If \(\mathcal J\) is a visit to
\(U\in\mathcal U\), then
\[
 q\bigl(\gamma_{\mathcal P}(\mathcal J)\bigr)
 \subset
 [\min q(U),\max q(U)].
\]
The values on the regular components immediately adjacent to
\(\mathcal J\) are within \(\widehat L\delta\) of this interval.  If two visits to
the same \(U\) are separated by a regular component, the corrected
values on that component increase from the \(\widehat L\delta\)-neighbourhood of
\(q(U)\) to the same \(\widehat L\delta\)-neighbourhood.  Hence all these values
belong to
\[
 \bigl(\min q(U)-2\widehat L\delta,
        \max q(U)+2\widehat L\delta\bigr).
\]
Choose successive endpoint values in this interval and join them by
continuous strictly increasing functions on the intervening visits.
The local finiteness proved above makes these choices compatible.  At
a terminal visit to a singularity \(s\), choose the values to converge
monotonically to \(q(s)\).

Only \(U_{x_0}\) and \(U_{z_0}\) can both occur.  They occur in this
order, and the definition of \(d_q\) in the proof of Claim
\ref{cl:singular-neighbourhoods} gives
\[
 \max q(U_{x_0})<\min q(U_{z_0}).
\]
Reducing \(\delta_0\) so that \(4\widehat L\delta_0<d_q\), the two enlarged intervals
used above are still disjoint and occur in the same order.  It follows
that the preceding local definitions combine into a continuous
strictly increasing function
\[
 \widetilde q\colon\mathcal T\longrightarrow\mathbb R
\]
satisfying
\begin{equation}\label{eq:claim3-global-q-bound}
 \left|\widetilde q(t)-
 q\bigl(\gamma_{\mathcal P}(t)\bigr)\right|
 \le d_U+5\widehat L\delta
 \qquad(t\in\mathcal T).
\end{equation}

On the selected nonfixed orbit, define
\[
 Q_z(u):=q(\phi_u(z)).
\]
This is continuous and strictly increasing.  Its image is
\(\mathbb R\) for a regular orbit,
\((-\infty,q(s))\) for an incoming branch ending at \(s\),
\((q(s),+\infty)\) for an outgoing branch starting at \(s\), and
\((q(x_0),q(z_0))\) for the branch \(B\).
In the branch cases, the endpoint values in the preceding
construction are chosen on the corresponding side of the singular
\(q\)-coordinate and converge to that coordinate at a terminal end.
Therefore
\[
 \widetilde q(\mathcal T)\subset Q_z(\mathbb R).
\]
We may now define, on the whole middle interval,
\begin{equation}\label{eq:claim3-global-time}
 H(t):=Q_z^{-1}(\widetilde q(t)),
 \qquad t\in\mathcal T.
\end{equation}
The function \(H\) is continuous and strictly increasing.  Moreover,
\[
 q\bigl(\phi_{H(t)}(z)\bigr)=\widetilde q(t).
\]
If \(\gamma_{\mathcal P}(t)\in R\), then
\eqref{eq:claim3-label-bound},
\eqref{eq:claim3-coordinate-distance}, and
\eqref{eq:claim3-global-q-bound} give
\[
\begin{aligned}
 d\bigl(\gamma_{\mathcal P}(t),\phi_{H(t)}(z)\bigr)
 &\le
 2\widetilde\varepsilon+2(d_U+5\widehat L\delta)\\
 &<\frac{\varepsilon}{5}.
\end{aligned}
\]
If \(\gamma_{\mathcal P}(t)\in U\in\mathcal U\), then
\(a(z)\) and \(a(\gamma_{\mathcal P}(t))\) belong to \(a(U)\).
Consequently,
\[
\begin{aligned}
 d\bigl(\gamma_{\mathcal P}(t),\phi_{H(t)}(z)\bigr)
 &\le
 \frac{\widetilde\varepsilon}{10}
 +2(d_U+5\widehat L\delta)\\
 &<\frac{\varepsilon}{5}.
\end{aligned}
\]
These estimates also apply on a terminal visit.  At such an end,
\(\widetilde q(t)\to q(s)\), and hence
\(H(t)\to-\infty\) on an outgoing branch and
\(H(t)\to+\infty\) on an incoming branch.  For the branch \(B\), the
corresponding limits hold at its two ends.

At an unbounded end of \(\mathcal T\) which is not terminal, the
uniform positiveadvance of values of $q$ on regular blocks implies
\(\widetilde q(t)\to-\infty\) in backward time, respectively
\(\widetilde q(t)\to+\infty\) in forward time.  Thus the same limits
hold for \(H(t)\) at every unbounded nonterminal end of
\(\mathcal T\).

It remains to extend \(H\) through the collars.  Suppose first that
\(b_\partial> -\infty\).  Every
earlier
complete segment has both endpoints within \(\delta\) of
\(V_\partial\), and radial monotonicity therefore keeps it in
\(W_\partial\).  We use these complete segments and truncate the
final one at \(b_\partial\).  Since the vector field change is absent on
\(W_\partial\), one has \(\dot\theta=1\) throughout this initial
part.  Choose successive lifts
\(\Sigma_i\in\mathbb R\) of the angular coordinates of the initial
points so that
\[
 \left|\Sigma_{i+1}-(\Sigma_i+t_i)\right|
 \le \widehat L\delta.
\]
Then
\[
 \Sigma_{i+1}-\Sigma_i
 \ge T_0-\widehat L\delta>\frac{T_0}{2}.
\]
On each complete step define
\[
 \widetilde\Sigma(S_i+u)
 :=\Sigma_i+\frac{u}{t_i}(\Sigma_{i+1}-\Sigma_i),
 \qquad 0\le u\le t_i.
\]
If the final segment exits \(V_\partial\) before its endpoint, use
the exact lifted phase up to the exit time. If the exit occurs at the
following jump, use the same affine formula on the complete segment.
These formulas define a continuous strictly increasing function on
\((-\infty,b_\partial]\), with
\begin{equation}\label{eq:claim3-phase-bound}
 d_{\mathbb S^1}\bigl(
 \widetilde\Sigma(t),\theta(\gamma_{\mathcal P}(t))\bigr)
 \le \widehat L\delta.
\end{equation}

Retain the point \(z\in\mathcal O\) selected above.  In choosing the
left endpoint value of \(\widetilde q\), require that
\(\phi_{H(b_\partial)}(z)\) be the point at which \(\mathcal O\) leaves
\(V_\partial\).  This is compatible with
\eqref{eq:claim3-global-q-bound}, since \(q\) depends only on \(r\). If
the pseudo-orbit leaves \(V_\partial\) at a jump, the resulting
error in \(q\) is at most \(\widehat L\delta\).

For \(t\le b_\partial\), put
\[
H(t):=
H(b_\partial)
+\widetilde\Sigma(t)-\widetilde\Sigma(b_\partial).
\]
Then both trajectories lie in \(W_\partial\) on this past interval,
and the selected orbit satisfies \(\dot\theta=1\).  At \(b_\partial\),
the relation \(\theta=a+q\) and the middle-interval estimates give
\[
d_{\mathbb S^1}\bigl(
\theta(\gamma_{\mathcal P}(b_\partial)),
\theta(\phi_{H(b_\partial)}(z))\bigr)
\le
2\widetilde\varepsilon+d_U+5\widehat L\delta.
\]
Consequently, for \(t\le b_\partial\),
\[
\begin{aligned}
	d_{\mathbb S^1}\bigl(
	\theta(\gamma_{\mathcal P}(t)),
	\theta(\phi_{H(t)}(z))\bigr)
	&\le
	2\widetilde\varepsilon+d_U+7\widehat L\delta.
\end{aligned}
\]
Together with the radial width of \(W_\partial\), this gives
\[
d\bigl(\gamma_{\mathcal P}(t),\phi_{H(t)}(z)\bigr)
<\frac{\varepsilon}{2}
\qquad(t\le b_\partial).
\]
Moreover, \(H(t)\to-\infty\) as \(t\to-\infty\).

If \(p_i\in V_\partial\) for every \(i\), the preceding radial
estimate gives \(\gamma_{\mathcal P}(\mathbb R)\subset W_\partial\).
Choose the lifts \(\Sigma_i\) and define
\(\widetilde\Sigma\colon\mathbb R\to\mathbb R\) by the same affine
formula on every step.  The estimates above show that
\(\widetilde\Sigma\) is a strictly increasing homeomorphism and that
\eqref{eq:claim3-phase-bound} holds on \(\mathbb R\).
Choose a point \(z\) on the boundary periodic orbit whose phase $\theta(z)$
agrees with $\theta(\gamma_{\mathcal P}(0))$, and put
\[
 H(t):=\widetilde\Sigma(t)-\widetilde\Sigma(0).
\]
Then \(H\) is an increasing homeomorphism and the same estimate
applies.

Finally, suppose that \(b_0<+\infty\).  Then the pseudo-orbit has no
future-terminal visit to a member of \(\mathcal U\).  By the choice of
the orbit \(\mathcal O\), it follows that
\[
\lim_{t\to+\infty}\phi_t(z)=0.
\]
Choose the endpoint value in
\eqref{eq:claim3-global-time} so that
\(\phi_{H(b_0)}(z)\in V_0\), and extend \(H\) from
\([b_0,+\infty)\) onto \([H(b_0),+\infty)\) by any increasing
homeomorphism.  The selected orbit remains in \(V_0\), while
\(\gamma_{\mathcal P}([b_0,+\infty))\subset W_0\).  Hence
\[
 d\bigl(\gamma_{\mathcal P}(t),\phi_{H(t)}(z)\bigr)
 <\frac{\varepsilon}{10}
 \qquad(t\ge b_0).
\]
The cases in which the whole pseudo-orbit lies in \(V_0\) or in
\(V_\partial\) have already been treated.  In every remaining case,
Claim \ref{cl:singular-neighbourhoods} gives
\(b_\partial<b_0\).  The definition on the middle interval
\(\mathcal T=[b_\partial,b_0]\) agrees at \(b_\partial\) with the
past trajectory extension and at \(b_0\) with the future trajectory extension.  Hence these
definitions combine into a continuous strictly increasing map
\(H\colon\mathbb R\to\mathbb R\).  The limits established above give
\[
\lim_{t\to-\infty}H(t)=-\infty,\qquad
\lim_{t\to+\infty}H(t)=+\infty,
\]
so \(H\) is an increasing homeomorphism of \(\mathbb R\).

The tracing error is less than \(\varepsilon/2\) on$V_\partial$
part, less than \(\varepsilon/5\) on \(\mathcal T\), and less than
\(\varepsilon/10\) on the $V_0$ part.  Therefore
\[
d\bigl(\gamma_{\mathcal P}(t),\phi_{H(t)}(z)\bigr)
<\varepsilon
\qquad(t\in\mathbb R).
\]
Finally, set
\[
 \bar z:=\phi_{H(0)}(z),\qquad
 h(t):=H(t)-H(0).
\]
Then \(h\in\operatorname{Rep}\) and
\(\phi_{h(t)}(\bar z)=\phi_{H(t)}(z)\) for every \(t\).  Hence every
\(\delta\)-\(T_0\) pseudo-orbit is \(\varepsilon\)-traced.  By the
equivalence of the \(\delta\)-\(T_0\) and \(\delta\)-\(1\)
definitions, \(\phi\) has the oriented shadowing property.
\end{proof}

\begin{claim}\label{cl:long-residence}
Let \(U\) be an open neighbourhood of \(J\) such that
\(\overline U\subset\hat Q_1\), and fix \(n\ge1\).  Let
\(\mathcal I_1,\ldots,\mathcal I_n,\mathcal I_\infty\subset\hat J\)
be pairwise disjoint compact intervals with
\(x_i\in\operatorname{Int}\mathcal I_i\), and suppose that
\[
 [x_0,x_{n+1}]
 \subset\operatorname{Int}\mathcal I_\infty.
\]
Assume that all their endpoints lie outside the set of zeros of \(g\).
Then there exists \(R_n<\infty\) such that
\[
 \phi_{[\alpha,\beta]}(p)\subset U,\qquad
 \beta-\alpha>R_n
 \quad\Longrightarrow\quad
 s_{\hat Q_1}(\phi_\alpha(p))\in
 \mathcal I_1\cup\cdots\cup\mathcal I_n\cup\mathcal I_\infty,
\]
for every orbit segment contained in \(U\).
\end{claim}

\begin{proof}
Put
$K_n=\hat J\setminus
 \operatorname{Int}\bigl(
 \mathcal I_1\cup\cdots\cup\mathcal I_n\cup\mathcal I_\infty
 \bigr).$
If \(K_n=\emptyset\), the conclusion is immediate.  Otherwise, the
interiors of the displayed intervals contain every zero of the
function \(g\).  Hence
\[
 m_n:=\min_{s\in K_n}g(s)>0.
\]
Since the transverse coordinate \(s_{\hat Q_1}\) is constant along
every orbit segment in \(\hat Q_1\), a segment whose transverse
coordinate belongs to \(K_n\) remains in \(U\) for at most the full crossing time of the box
\[
 R_n:=
 \int_{-\kappa}^{\kappa}
 \frac{d\tau}{1-b(\tau)(1-m_n)}<\infty.
\]
The stated implication follows.
\end{proof}

\begin{figure}[h]
        \centering
        \begin{tikzpicture}[yscale=0.6*1.4, xscale=0.6*1.5]
        	
        	\fill[boxblue] (-1.0, 0) rectangle (0.8, 5.0);
        	\fill[boxyellow] (3.5, 0.8) rectangle (5.3, 5.0);
        	
        	\draw[red, line width=1.5pt] (0, 0) -- (0, 5.0);
        	\draw[red, line width=1.5pt] (4.5, 0) -- (4.5, 5.0);
        	
        	\node[above=0.3cm] at (0, 5.0) {\large $J$};
        	\node[above=0.3cm] at (4.5, 5.0) {\large $J'$};
        	
        	\draw[thick, -{Stealth[scale=1.0]}] (-1.0, 5.0) -- (2.8, 5.0);
        	\draw[thick] (2.8, 5.0) -- (4.5, 5.0);
        	
        	\node[left] at (-1.0, 4.5) {$Q_{n+1}$};
        	\draw[thick, -{Stealth[scale=1.0]}] (-1.0, 4.1) -- (2.8, 4.1);
        	\draw[thick] (2.8, 4.1) -- (4.5, 4.1);
        	
        	\draw[thick, -{Stealth[scale=1.0]}] (3.5, 3.6) -- (5.6, 3.6);
        	\draw[thick] (5.6, 3.6) -- (6.6, 3.6);
        	
        	\node[left] at (-1.0, 3.7) {$Q_n$};
        	\draw[thick] (-1.0, 3.2) -- (0.5, 3.2);
        	
        	\draw[thick, arrowblue, -{Stealth[scale=1.0]}] (0.5, 3.2) -- (1.8, 3.2);
        	\draw[thick, arrowblue] (1.8, 3.2) -- (4.5, 3.2);
        	
        	\draw[thick, -{Stealth[scale=1.0]}] (3.5, 2.4) -- (5.6, 2.4);
        	\draw[thick] (5.6, 2.4) -- (6.6, 2.4);
        	
        	\node[left] at (-1.0, 2.5) {$Q_{n-1}$};
        	\draw[thick, -{Stealth[scale=1.0]}] (-1.0, 1.8) -- (2.8, 1.8);
        	\draw[thick] (2.8, 1.8) -- (4.5, 1.8);
        	
        	\draw[thick, arrowblue, -{Stealth[scale=1.0]}] (4.5, 1.3) -- (5.6, 1.3);
        	\draw[thick, arrowblue] (5.6, 1.3) -- (6.6, 1.3);
        	
        	\draw[thick, -{Stealth[scale=1.0]}] (3.5, 0.8) -- (5.6, 0.8);
        	\draw[thick] (5.6, 0.8) -- (6.6, 0.8);
        	
        	\node[left] at (-1.0, 0.7) {$Q_{n-2}$};
        	\draw[thick, -{Stealth[scale=1.0]}] (-1.0, 0.0) -- (2.8, 0.0);
        	\draw[thick] (2.8, 0.0) -- (4.5, 0.0);
        	
        	\draw[dashblue, thick, dash pattern=on 4pt off 3pt] (2.0, 4.1) -- (2.0, 5.0);
        	
        	\filldraw (0, 5.0) circle (1.5pt) node[anchor=north east, inner sep=2pt] {$x_0$};
        	\filldraw (0, 4.1) circle (1.5pt) node[anchor=north east, inner sep=2pt] {$x_{n+1}$};
        	\filldraw (0, 3.2) circle (1.5pt) node[anchor=south east, inner sep=2pt] {$x_n$};
        	\filldraw (0.5, 3.2) circle (1.5pt) node[anchor=north west, inner sep=2pt] {$p_1$};
        	\filldraw (0, 1.8) circle (1.5pt) node[anchor=north east, inner sep=2pt] {$x_{n-1}$};
        	\filldraw (0, 0.0) circle (1.5pt) node[anchor=north east, inner sep=2pt] {$x_{n-2}$};
        	
        	\filldraw (4.5, 5.0) circle (1.5pt) node[anchor=south east, inner sep=2pt] {$z_0$};
        	\filldraw (4.5, 4.1) circle (1.5pt) node[anchor=south east, inner sep=2pt] {$z_{n+1}$};
        	\filldraw (4.5, 3.6) circle (1.5pt) node[anchor=south west, inner sep=2pt] {$y_n$};
        	\filldraw (4.5, 3.2) circle (1.5pt) node[anchor=south east, inner sep=2pt] {$z_n$};
        	\filldraw (4.5, 2.4) circle (1.5pt) node[anchor=south west, inner sep=2pt] {$y_{n-1}$};
        	\filldraw (4.5, 1.8) circle (1.5pt) node[anchor=south east, inner sep=2pt] {$z_{n-1}$};
        	\filldraw (4.5, 1.3) circle (1.5pt) node[left=0.1cm] {$p_3$};
        	\filldraw (4.5, 0.8) circle (1.5pt) node[anchor=south west, inner sep=2pt] {$y_{n-2}$};
        	\filldraw (4.5, 0.0) circle (1.5pt) node[anchor=south east, inner sep=2pt] {$z_{n-2}$};
        	
        \end{tikzpicture}
        \caption{The idea behind the proof of Claim~\ref{cl:standard-corrected}}
    \label{fig:clam2}
    \end{figure}
    
\begin{claim}\label{cl:standard-corrected}
The flow \(\phi\) does not have the standard shadowing property.
\end{claim}

\begin{proof}
Choose once and for all
\begin{equation}\label{eq:standard-accuracy}
 0<\varepsilon_0<
 \min\left\{\frac1{10},\frac{\eta}{4}\right\}
\end{equation}
so small that the \(2\varepsilon_0\)-neighbourhood of
\(\overline{Q'_1}\) is disjoint from
\(\hat Q_1\cup Q''_1\cup W_0\cup W_\partial\), and so that
\(\overline{B_{\varepsilon_0}(J)}\subset\hat Q_1\).
Assume, for a contradiction, that standard shadowing holds, and let
\(\delta>0\) trace every \(\delta\)-\(1\) pseudo-orbit with accuracy
\(\varepsilon_0\) and a reparametrization in
\(\operatorname{Rep}(\varepsilon_0)\).

For \(m\ge1\), put
\[
 N_m=N_{\varepsilon_0}(\overline{Q'_m}),\qquad
 \hat{N}_m=N_{2\varepsilon_0}(\overline{Q'_m}).
\]
The time spent in \(\hat{N}_m\setminus\hat Q'_1\) is bounded uniformly
in \(m\).  Indeed, these sets are contained in the compact 
set
\[
 R_C=\overline{\hat{N}_1\setminus\hat Q'_1},
\]
and every orbit crosses \(R_C\) in the positive \(q\)-direction.
Put
\[
 \ell_c=\max_{R_C}q-\min_{R_C}q,\qquad
 \mu_C=\min_{R_C}\dot q>0,\qquad C=\frac{\ell_c}{\mu_C}.
\]
Since \(q\) is strictly increasing along every regular orbit, each
orbit spends at most \(C\) units of time in \(R_C\).  For an orbit
\(\mathcal O\), define
\[
 \Theta_m(\mathcal O)
 :=\operatorname{Leb}\{t\in\mathbb R:\phi_t(z)\in N_m\},
 \qquad z\in\mathcal O.
\]
This is independent of the choice of \(z\in\mathcal O\).  If
\(\mathcal O\) meets \(J'\) at a regular point with transverse
coordinate \(s\), then
\begin{equation}\label{eq:theta-corrected}
 \Theta_{n-1}(\mathcal O)\le T_2(s)+C,
 \qquad
 T_2(s)\le\Theta_{n-1}(\mathcal O)
 \quad\text{if }s\in[z_0,y_{n-1}].
\end{equation}
and \(\Theta_{n-1}(\mathcal O)=+\infty\) when the corresponding
branch is asymptotic to a singularity in
\(\overline{Q'_{n-1}}\).  If \(\mathcal O\) does not meet
\(N_{n-1}\), then \(\Theta_{n-1}(\mathcal O)=0\).

Choose \(n\) so large that
\begin{enumerate}
\item \(d(x_n,x_0)<\delta/4\) and
      \(d(z_n,y_n)<\delta/4\);
\item \(\widehat T_n\ge C\);
\item \(2^{n-1}\ge64\).
\end{enumerate}
Put
\begin{equation}\label{eq:Tprime-corrected}
 T'=4(\widehat T_n+C).
\end{equation}
On the gap \((y_n,z_n)\), the function \(T_2\) is continuous, equals
\(\widehat T_n\) at \(z_n\), and tends to \(+\infty\) at \(y_n\).
Choose
\begin{equation}\label{eq:p3-corrected}
 p_3\in(y_n,z_n),\qquad T'<T_2(p_3)<2T'.
\end{equation}
In particular \(d(p_3,y_n)<\delta\).

Choose an open neighbourhood \(U\) of \(J\) such that
\[
 \overline{B_{\varepsilon_0}(J)}\subset U\subset \overline U\subset\hat Q_1.
\]
Choose pairwise disjoint compact intervals
\(\mathcal I_i\subset\hat J\), with
\(x_i\in\operatorname{Int}\mathcal I_i\) for \(1\le i\le n\), so
small that
\begin{equation}\label{eq:bounded-crossing-neighborhood}
 s\in P(\mathcal I_i)
 \quad\Longrightarrow\quad
 T_2(s)\le
 \left(1+\frac{\varepsilon_0}{2}\right)T_2(z_i).
\end{equation}
By \eqref{eq:direct-slowdown}, every
\(s\in[z_0,z_{n+1}]\) with \(g(s)>0\) has crossing time at least
\(2^n\widehat T_n\).  Since the crossing time tends to \(+\infty\)
at \(z_0\) and is continuous at the regular point \(z_{n+1}\),
there is a compact interval
\(\mathcal I_\infty\subset\hat J\), disjoint from
\(\mathcal I_1,\ldots,\mathcal I_n\), such that
\([x_0,x_{n+1}]\subset\operatorname{Int}\mathcal I_\infty\) and
\begin{equation}\label{eq:tail-interval}
 s\in P(\mathcal I_\infty),\quad g(s)>0
 \quad\Longrightarrow\quad
 T_2(s)\ge2^{n-1}\widehat T_n.
\end{equation}
Choose all interval endpoints outside the zero set of \(g\), and let
\(R_n\) be supplied by Claim \ref{cl:long-residence}.  Finally,
choose \(T>0\) so that
\begin{equation}\label{eq:choice-of-T}
 (1-\varepsilon_0)T>R_n.
\end{equation}

We now define the promised bi-infinite pseudo-orbit completely.
Let \(W^-(x_n)\) be the branch coming from the boundary and
asymptotic to \(x_n\).  Choose \(p_{-1}\in W^-(x_n)\) so far forward
that
\[
 d(\phi_1(p_{-1}),x_n)<\delta.
\]
For \(i\le-2\), set \(t_i=1\) and choose
\(\phi_1(p_i)=p_{i+1}\).  Then \(p_i\) approaches the boundary as
\(i\to-\infty\).  At index \(-1\), take \(t_{-1}=1\), followed by the
jump to \(x_n\).

Put
\begin{equation}\label{eq:standard-pseudo-wait}
 p_0=x_n,\qquad t_0=T
\end{equation}
where \(T\) will be chosen below.
Let \(W^+(x_n)\setminus\{x_n\}\) be the outgoing branch.  Choose
\(p_1\in W^+(x_n)\setminus \{x_n\}\) with \(d(p_1,x_n)<\delta\), so close to \(x_n\)
that the time \(t_1\) determined by
\[
 \phi_{t_1}(p_1)=z_n
\]
satisfies \(t_1\ge1\).  Put \(p_2=y_n\) and \(t_2=T'\). The jump
from \(\phi_{t_1}(p_1)=z_n\) to \(p_2\), and the jump from
\(\phi_{t_2}(p_2)=y_n\) to \(p_3\), both have size less than
\(\delta\).  From \(p_3\)
onwards partition its genuine forward orbit into unit-time pieces:
\[
 t_i=1,\qquad p_{i+1}=\phi_1(p_i)\quad(i\ge3).
\]
This is a bi-infinite \(\delta\)-\(1\) pseudo-orbit, denoted by
\(\xi\).

Let \(\gamma_\xi\) be the interpolation associated with \(\xi\),
defined from its cumulative times as in part {\rm(3)} of Claim
\ref{cl:singular-neighbourhoods}.  Let \([c,d]\) be the maximal
interval for which \(\gamma_\xi(t)\) lies in
\(\overline{Q'_{n-1}}\).
The portion of the orbit through \(z_n\) takes at most time
\(\widehat T_n\), the constant part at \(y_n\) takes \(T'\), and
the orbit portion beginning at \(p_3\) takes less than
\(T_2(p_3)<2T'\).  Therefore
\begin{equation}\label{eq:dc-corrected}
 T'\le d-c\le\widehat T_n+T'+2T'
       \le\frac{13}{4}T'.
\end{equation}

Suppose that \(z\) strongly \(\varepsilon_0\)-traces \(\xi\) via
\(h\in\operatorname{Rep}(\varepsilon_0)\), and denote the orbit of
\(z\) by \(\mathcal O_z\).  Since
\(\phi_{h(t)}(z)\in N_{n-1}\) for every \(t\in[c,d]\),
\begin{equation}\label{eq:theta-lower-corrected}
 \Theta_{n-1}(\mathcal O_z)
 \ge h(d)-h(c)
 \ge(1-\varepsilon_0)(d-c)>\frac23T'.
\end{equation}
Before time \(c\) and after time \(d\), the interpolation follows
genuine orbit pieces.  Each such orbit spends at most \(C\) units
of time in \(\hat{N}_{n-1}\setminus\hat Q'_1\); hence
\[
 \gamma_\xi(t)\notin \hat{N}_{n-1}
 \qquad(t<c-C\text{ or }t>d+C).
\]
At those times its tracing orbit cannot belong to \(N_{n-1}\), because
the tracing error is at most \(\varepsilon_0\).  Since \(h\) is onto,
all times for which \(\phi_{h(t)}(z)\in N_{n-1}\) therefore lie in
\([c-C,d+C]\).  It follows that
\begin{equation}\label{eq:theta-upper-corrected}
 \Theta_{n-1}(\mathcal O_z)
 \le(1+\varepsilon_0)\bigl((d-c)+2C\bigr)
 \le5T'.
\end{equation}

For \(0\le t\le T\), one has
\(\gamma_\xi(t)=x_n\).
Thus \(\phi_{h(t)}(z)\in B_{\varepsilon_0}(x_n)\) for
\(0\le t\le T\), an actual time interval of length at least
\((1-\varepsilon_0)T\).  By \eqref{eq:choice-of-T}, this interval
has length greater than \(R_n\).

Since \(h(0)=0\) and \(h\in\operatorname{Rep}(\varepsilon_0)\),
\[
 \phi_{[0,h(T)]}(z)\subset U,\qquad
 h(T)\ge(1-\varepsilon_0)T>R_n.
\]
Claim \ref{cl:long-residence} therefore gives
\[
 s_{\hat Q_1}(z)\in
 \mathcal I_1\cup\cdots\cup\mathcal I_n\cup\mathcal I_\infty.
\]

If \(\Theta_{n-1}(\mathcal O_z)=0\), then
\eqref{eq:theta-lower-corrected} is contradicted and if
\(\Theta_{n-1}(\mathcal O_z)=+\infty\), then
\eqref{eq:theta-upper-corrected} is contradicted.  We may therefore
assume that \(0<\Theta_{n-1}(\mathcal O_z)<\infty\).  In particular,
\(\mathcal O_z\) reaches the second box and crosses it in finite
time.

If \(s_{\hat Q_1}(z)\in\mathcal I_i\) for some
\(1\le i\le n\), then \eqref{eq:theta-corrected},
\eqref{eq:bounded-crossing-neighborhood}, and the monotonicity of
\((\widehat T_i)\) give
\[
 \Theta_{n-1}(\mathcal O_z)
 \le(1+\varepsilon_0/2)\widehat T_n+C
 \le\frac76(\widehat T_n+C)
 =\frac7{24}T'
 <\frac23T',
\]
contradicting \eqref{eq:theta-lower-corrected}.  This also covers a
branch issuing from a singularity \(x_i\).

If \(s_{\hat Q_1}(z)\in\mathcal I_\infty\), then
\eqref{eq:tail-interval} gives the following bound when the
corresponding orbit portion in the second box is regular:
\[
 \Theta_{n-1}(\mathcal O_z)\ge2^{n-1}\widehat T_n
 \ge64\widehat T_n
 \ge32(\widehat T_n+C)=8T'>5T',
\]
contradicting \eqref{eq:theta-upper-corrected}.  If that second-box
orbit portion is asymptotic to a singularity in the second box,
the tracing orbit remains in \(N_{n-1}\) for infinite forward time,
which contradicts \eqref{eq:theta-upper-corrected} directly.  The
finite intervals and the tail interval exhaust all possible tracing
orbits.  Hence \(\xi\) is not strongly
\(\varepsilon_0\)-traced, contradicting standard shadowing.
\end{proof}

The proof is completed by combining Claim~\ref{cl:oriented-corrected} with Claim~\ref{cl:standard-corrected}
\end{proof}

We observe that the previous construction can be carried out straightforwardly in $A$ as follows. We begin by considering the initial flow $\phi': \mathbb{R} \times A \to A$ satisfying the following properties:
\begin{enumerate}
    \item The circle $O_1 = \{ x \in \mathbb{R} : \|x\| = 1 \}$ is a hyperbolic attracting periodic orbit for $\phi'$.
    \item The circle $O_2 = \{ x \in \mathbb{R} : \|x\| = 2 \}$ is a hyperbolic repelling periodic orbit for $\phi'$.
    \item Every point $x \notin O_1 \cup O_2$ converges to $O_1$ in forward time and to $O_2$ in backward time, while spiraling around $O_1$ with uniform angular speed.
\end{enumerate}
From this point onward, the construction is completely analogous to the one carried out in Theorem \ref{thm:disc}. We thus obtain the following result.

\begin{theorem}\label{thm:annulus}
There is a $C^\infty$-flow in  $A$ with the oriented shadowing property, but without the standard shadowing property.
\end{theorem}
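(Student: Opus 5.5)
The plan is to transplant the proof of Theorem~\ref{thm:disc} to the annulus, changing only the global part of the dynamics (the two collars) and keeping the local construction inside the flow boxes verbatim.

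\textbf{Base flow and coordinates.} Take $\phi'$ generated by a $C^\infty$ field with $\dot\theta=1$ and $\dot r=f(r)$, where $f<0$ on $(1,2)$, $f(1)=f(2)=0$, $f'(1)<0$ and $f'(2)>0$. For instance $f(r)=(r-1)(r-2)$ works. Then $O_1$ is a hyperbolic attracting periodic orbit, $O_2$ is a hyperbolic repelling one, and $r$ strictly decreases on $\operatorname{Int}A$. Define $u$ by $u'=-1/f$, set $q=-u$ so that $\dot q=1$, and put $a=\theta+u$ so that $\dot a=0$. These give orbit coordinates bi-Lipschitz on compact sub-annuli, exactly as in \eqref{eq:orbit-coordinates}. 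On a radial segment $I$, the first return map $P$ is a Morse--Smale interval homeomorphism fixing only the two endpoints, with return time bounded above and below. This lets us choose $J,J',J''$, the points $x_n,y_n,z_n$, the set $S$, the boxes $\hat Q_1,\hat Q'_1$ and the multiplier $\rho=1-b(1-g)$ word for word. Smoothness of $\rho$, the crossing time estimate \eqref{eq:Tsqrt} and the recursive choice \eqref{eq:direct-slowdown} are all local to the boxes, so they transfer unchanged.

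\textbf{Oriented shadowing.} Claims~\ref{cl:box-passage} and~\ref{cl:long-residence} are purely local and need no change. In Claim~\ref{cl:singular-neighbourhoods} the sink $V_0$ is replaced by a collar $V_1=\{r-1\le 2\nu_1\}$ of $O_1$. On the buffer $W_1$ the field is unchanged and $\dot{(r-1)}\le -c(r-1)$ with $c>0$, which gives exponential contraction analogous to \eqref{eq:claim2-sink-contraction}, with constants adjusted in \eqref{eq:claim2-collar-smallness}. The boundary collar near $O_2$ is handled as before using $\dot\varrho\ge c\varrho$.

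In Claim~\ref{cl:oriented-corrected}, the future part after $b_0$ is no longer trivial: the pseudo-orbit does not collapse to a point but winds around $O_1$. It must be treated exactly like the past collar. Choose angular lifts $\Sigma_i$, define the piecewise affine $\widetilde\Sigma$, and extend $H$ using $\dot\theta=1$ near $O_1$. The selected orbit $\mathcal O$ converges to $O_1$, so its radial distance to the pseudo-orbit is controlled by the width of $W_1$. The case where the whole pseudo-orbit lies in $V_1$ mirrors the case of $V_\partial$: it is traced by a point on $O_1$.

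\textbf{Failure of standard shadowing.} Claim~\ref{cl:standard-corrected} goes through verbatim, with $W_0$ replaced by $W_1$ in \eqref{eq:standard-accuracy}. Its argument only uses the crossing times in $\hat Q'_1$, the bounded residence time in $R_C$, and Claim~\ref{cl:long-residence}.

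\textbf{Main obstacle.} I expect the main obstacle to be the gluing at $b_0$ in the oriented shadowing part. There the middle-interval reparametrization must match both the radius and the angle, since $\theta=a+q$, when handing over to the collar extension. Near the disc's sink only the radius mattered. I would handle this as at $b_\partial$: fix the endpoint value of $\widetilde q$ at $b_0$ to be the point where $\mathcal O$ enters $V_1$, and use the estimate $d_{\mathbb S^1}(\theta(\gamma_{\mathcal P}(b_0)),\theta(\phi_{H(b_0)}(z)))\le 2\widetilde\varepsilon+d_U+5\widehat L\delta$. That bound then propagates forward with error only $O(\widehat L\delta)$ per step, because both trajectories rotate at unit angular speed in $W_1$ and the accumulated jump errors are absorbed in the choice of lifts.
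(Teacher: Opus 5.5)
Your proposal is correct and follows the paper's route. The paper likewise replaces the base flow by one on $A$ with an attracting circle $O_1$ and a repelling circle $O_2$, and declares the rest of the disc construction analogous. Your observation that the only non-verbatim step is the future end of the oriented-shadowing tracing near $O_1$, handled by the same phase-lift reparametrization used for the collar of $\partial D$, is exactly the adjustment the paper's ``completely analogous'' leaves implicit.
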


\begin{remark}\label{rmk:annulus}
    We would like to emphasize that we have chosen to present the constructions in $D$ and $A$ only for simplicity. In fact, our methods work equally well in arbitrarily small discs, arbitrarily small annuli, or arbitrarily thin annuli. Additionally, by modifying the initial flows, one can easily obtain  flows with exactly same properties, but  with the attracting and repelling roles reversed.
\end{remark}

We are now in a position to provide the reader with a proof for Theorem \ref{thm:denseflows}.

\begin{proof}[Proof of Theorem \ref{thm:denseflows}]

We first construct one example. Choose a $C^\infty$ Morse--Smale flow
$\psi'$ on $M$ with a hyperbolic sink $\sigma$; see
\cite[Chapter~4]{Palis}. By a $C^\infty$ modification supported in an
arbitrarily small neighborhood of $\sigma$, we may arrange that
$\sigma$ is surrounded by a hyperbolic repelling periodic orbit $O$.
Thus $O$ bounds a closed disc $D$ containing $\sigma$, and the vector
field points away from $O$ on both sides. We further choose the
modification so that, in smooth coordinates on a collar of $O$, the
restriction $\psi'|_D$ agrees with the boundary collar of the base
flow used in Theorem~\ref{thm:disc}. The modification introduces only
hyperbolic critical elements and may be performed without creating
saddle connections. Consequently, the resulting flow is still
Morse--Smale and hence has the standard shadowing property by
\cite[Theorem~1.2]{Mu1}. This local configuration is depicted in
Figure~\ref{morse-smale}.

 \begin{figure}[h]
        \centering
        \begin{center}
\begin{tikzpicture}[rotate=25, yscale=2.0, xscale=2.0]

    \clip[rotate around={-25:(0,0)}] (-3.95, -2.95) rectangle (1.6, 0.5);

    \draw[thick, fill=gray!15] 
        (-4, 0) .. controls (-4, 3) and (-1, 1.5) .. (0, 1.5)  .. controls (1, 1.5) and (4, 3) .. (4, 0)      .. controls (4, -3) and (1, -1.5) .. (0, -1.5) .. controls (-1, -1.5) and (-4, -3) .. (-4, 0);
    \fill[white] 
        (-2.6, 0.1) .. controls (-2.0, -0.6) and (-1.0, -0.6) .. (-0.4, 0.1) -- (-0.6, -0.05) .. controls (-1.1, 0.3) and (-1.9, 0.3) .. (-2.4, -0.05) -- cycle; 
    \draw[thick] (-2.6, 0.1) .. controls (-2.0, -0.6) and (-1.0, -0.6) .. (-0.4, 0.1);
    \draw[thick] (-2.4, -0.05) .. controls (-1.9, 0.3) and (-1.1, 0.3) .. (-0.6, -0.05);

    \fill[white] 
        (0.4, 0.1) .. controls (1.0, -0.6) and (2.0, -0.6) .. (2.6, 0.1)
        -- (2.4, -0.05) .. controls (1.9, 0.3) and (1.1, 0.3) .. (0.6, -0.05)
        -- cycle;

    \draw[thick] (0.4, 0.1) .. controls (1.0, -0.6) and (2.0, -0.6) .. (2.6, 0.1);
    \draw[thick] (0.6, -0.05) .. controls (1.1, 0.3) and (1.9, 0.3) .. (2.4, -0.05);

\begin{scope}[shift={(-2.7, -0.85)}, scale=0.19, rotate=-25]

    \fill[cyan!15] (0, 0) ellipse (4 and 2.5);
    \fill[pattern=none, pattern color=blue!60!black] (0, 0) ellipse (4 and 2.5);

    \draw[red, line width=1pt, 
          mid arrow=0.125, mid arrow=0.375, 
          mid arrow=0.625, mid arrow=0.875] 
          (4,0) arc (0:360:4 and 2.5);

    \draw[line width=0.75pt, color=black, 
          mid arrow=0.08, mid arrow=0.25, mid arrow=0.42, 
          mid arrow=0.65, mid arrow=0.8, mid arrow=0.95
          ]
        (3.5, 0) 
        arc (0:180:3.35 and 2.1)   arc (180:360:3.05 and 1.8) arc (0:180:2.75 and 1.5)   arc (180:360:2.45 and 1.2) arc (0:180:2.15 and 0.9)   arc (180:360:1.85 and 0.6) arc (0:180:1.55 and 0.3);  
    \fill[red] (0, 0) circle (4pt);

    \node at (4.7, 0) {\large $O$};
    \node at (0.5, -0.1) {\large $\sigma$};

    \draw[line width=0.75pt, color=black!80, 
          mid arrow=0.12, mid arrow=0.37, mid arrow=0.62, mid arrow=0.87]
        (4.21, 1.03) 
        arc (20:180:4.75 and 3.0)  arc (180:360:5.25 and 3.3) arc (0:180:5.75 and 3.6)
        arc (180:360:6.25 and 3.9);
\end{scope}
\end{tikzpicture}
\end{center}
        \caption{The flow $\psi'$, The orbit $O$ and the singularity $\sigma$ depicted in red.}
       \label{morse-smale}
    \end{figure}
The construction in Theorem~\ref{thm:disc} changes only the speed of
the base flow, as expressed in \eqref{eq:time-changed-coordinates},
and its multiplier $\rho$ is identically one on a collar of the
boundary. After transporting that construction to $D$, we therefore
obtain a $C^\infty$ flow $\phi$ on $D$ which agrees with $\psi'|_D$
on a neighborhood of $O$, has the oriented shadowing property, and
does not have the standard shadowing property. Define
$\psi\colon\mathbb R\times M\to M$ by
\begin{equation}\label{eq:surface-insertion}
 \psi_t(x)=
 \begin{cases}
  \phi_t(x),&x\in D,\\
  \psi'_t(x),&x\in M\setminus\operatorname{Int}D.
 \end{cases}
\end{equation}
The two definitions agree on a collar of $O$, so \eqref{eq:surface-insertion}
defines a $C^\infty$ flow. Both
\[
 D\qquad\text{and}\qquad E:=M\setminus\operatorname{Int}D
\]
are invariant, and their common boundary is $O$.

We prove that $\psi$ has the oriented shadowing property. The
restriction $\psi|_D$ has oriented shadowing by
Theorem~\ref{thm:disc}. The restriction $\psi|_E=\psi'|_E$ has the
standard shadowing property. Indeed, $\psi|_E$ is a Morse--Smale flow
on a compact invariant surface whose boundary is the hyperbolic
periodic orbit $O$. The proof of the shadowing theorem for
Morse--Smale flows applies verbatim in this setting: the spectral
decomposition consists of the critical elements of $\psi'$ contained
in $E$, including $O$, and all local tracing constructions take place
in invariant neighborhoods of these elements. Thus the conclusion
follows by the same argument as \cite[Theorem~1.2]{Mu1}.

Fix $\varepsilon>0$ and choose
\begin{equation}\label{eq:pasting-accuracy}
 0<\varepsilon_1<\frac{\varepsilon}{3}.
\end{equation}
Let $\delta_D$ and $\delta_E$ be oriented shadowing constants for
$\psi|_D$ and $\psi|_E$, respectively, with tracing accuracy
$\varepsilon_1$, and put
\begin{equation}\label{eq:pasting-shadowing-constant}
 \delta_0=\min\{\delta_D,\delta_E\}.
\end{equation}

Because $O$ is a hyperbolic repeller, it has an arbitrarily thin
closed repelling collar $N$. We choose $N$ so that there is a
continuous projection $\pi\colon N\to O$ along its transverse fibers
satisfying
\begin{equation}\label{eq:collar-projection-estimates}
 \begin{split}
 d(x,\pi(x))&<\min\left\{\varepsilon_1,\frac{\delta_0}{3}\right\},\\
 d\bigl(\psi_t(x),\psi_t(\pi(x))\bigr)
 &<\min\left\{\varepsilon_1,\frac{\delta_0}{3}\right\}
 \qquad(0\le t\le2)
 \end{split}
\end{equation}
for every $x\in N$. Shrinking $N$ once more, if necessary, the
repelling property gives
\begin{equation}\label{eq:repelling-collar-separation}
 a_N:=\inf\left\{
 d\bigl(\psi_t(x),N\bigr):
 x\in M\setminus\operatorname{Int}N, 1\le t\le2
 \right\}>0.
\end{equation}
Choose
\begin{equation}\label{eq:pasting-pseudo-orbit-constant}
 0<\gamma<\min\left\{a_N,\frac{\delta_0}{3}\right\}.
\end{equation}

Let $P=(x_i,t_i)_{i\in\mathbb Z}$ be a
$\gamma$-$1$-pseudo-orbit for $\psi$. We may assume that
$1\le t_i<2$. It follows from
\eqref{eq:repelling-collar-separation} and
\eqref{eq:pasting-pseudo-orbit-constant} that once $P$ has a term in
$M\setminus\operatorname{Int}N$, all its subsequent terms remain
outside $N$ and in the same component of $M\setminus O$. Consequently,
if $P$ has terms outside $N$, all of them lie on the same side of
$O$. Denote the corresponding invariant side by $S$, where $S=D$ or
$S=E$. If every term of $P$ belongs to $N$, set $S=D$.

Define
\begin{equation}\label{eq:projected-pseudo-orbit}
 y_i=
 \begin{cases}
  x_i,&x_i\in S,\\
  \pi(x_i),&x_i\notin S.
 \end{cases}
\end{equation}
In the case in which every $x_i$ belongs to $N$, we instead put
$y_i=\pi(x_i)$ for all $i$. In either case $y_i\in S$. Moreover,
\eqref{eq:collar-projection-estimates} and
\eqref{eq:pasting-pseudo-orbit-constant} give
\begin{align}
 d\bigl(\psi_{t_i}(y_i),y_{i+1}\bigr)
 &\le d\bigl(\psi_{t_i}(y_i),\psi_{t_i}(x_i)\bigr)
   +d\bigl(\psi_{t_i}(x_i),x_{i+1}\bigr)
   +d(x_{i+1},y_{i+1}) \notag\\
 &<\delta_0.\label{eq:projected-pseudo-orbit-error}
\end{align}
Thus $(y_i,t_i)_{i\in\mathbb Z}$ is a
$\delta_0$-$1$-pseudo-orbit contained in $S$. The same estimates,
applied for every $0\le t\le t_i$, show that the interpolations of
$(x_i,t_i)$ and $(y_i,t_i)$ remain $\varepsilon_1$-close. By the
choice of $\delta_0$, the latter pseudo-orbit is
$\varepsilon_1$-traced by an orbit in $S$, with an increasing
reparametrization. It follows from \eqref{eq:pasting-accuracy} that
the original pseudo-orbit is $2\varepsilon_1$-traced and hence is
$\varepsilon$-traced. Therefore $\psi$ has the oriented shadowing
property.

We now show that $\psi$ does not have the standard shadowing property.
Let $\varepsilon_0$ be the number fixed in
\eqref{eq:standard-accuracy}. For each $\delta>0$, let $\xi^\delta$
be the $\delta$-$1$-pseudo-orbit constructed in the proof of
Claim~\ref{cl:standard-corrected}, with the choices made in
\eqref{eq:Tprime-corrected}, \eqref{eq:p3-corrected}, and
\eqref{eq:choice-of-T}. The alternatives
\eqref{eq:bounded-crossing-neighborhood} and
\eqref{eq:tail-interval}, together with
\eqref{eq:theta-lower-corrected} and
\eqref{eq:theta-upper-corrected}, show that $\xi^\delta$ is not
strongly $\varepsilon_0$-traced by any orbit of $\phi$. Since the
compact interval $J$ lies in
$\operatorname{Int}D$, define
\begin{equation}\label{eq:global-nonshadowing-accuracy}
 \widehat\varepsilon=
 \min\left\{\varepsilon_0,\frac12d(J,O)\right\}>0.
\end{equation}

Suppose that $\xi^\delta$ were strongly $\widehat\varepsilon$-traced
for $\psi$ by a point $z\in M$. By
\eqref{eq:standard-pseudo-wait}, its zeroth term is
$p_0=x_n\in J$. Since every reparametrization under consideration
satisfies $h(0)=0$, tracing at time zero and
\eqref{eq:global-nonshadowing-accuracy} imply that
$z\in\operatorname{Int}D$. The invariance of $D$ then gives
$\psi_t(z)\in D$ for every $t\in\mathbb R$. By
\eqref{eq:surface-insertion}, the orbit of $z$ is an orbit of $\phi$
and strongly $\widehat\varepsilon$-traces $\xi^\delta$. Since
$\widehat\varepsilon\le\varepsilon_0$, we have
$\operatorname{Rep}(\widehat\varepsilon)\subset
\operatorname{Rep}(\varepsilon_0)$, and the same orbit would strongly
$\varepsilon_0$-trace $\xi^\delta$. This contradicts
Claim~\ref{cl:standard-corrected}. Hence $\psi$ does not have the
standard shadowing property.

It remains to prove the density assertion. Let $X$ be a $C^1$ vector
field on $M$ with a zero $\sigma$, and let $\mathcal U$ be a
$C^0$-neighborhood of $X$. Choose $\eta_0>0$ such that
\begin{equation}\label{eq:density-neighborhood}
 \|Y-X\|_{C^0}<\eta_0\quad\Longrightarrow\quad Y\in\mathcal U.
\end{equation}
Since $X(\sigma)=0$, there is a coordinate disc $B$ centered at
$\sigma$ such that
\begin{equation}\label{eq:density-small-vector-field}
 \|X(x)\|<\frac{\eta_0}{8}
 \qquad(x\in B).
\end{equation}
Choose closed coordinate discs
\[
 B_0\subset\operatorname{Int}B_1
 \subset B_1\subset\operatorname{Int}B.
\]

On $B_1$ take a smooth vector field whose restriction to $B_0$ is a
rescaled copy of the local Morse--Smale configuration used in the
first part of the proof: a hyperbolic sink
surrounded by a hyperbolic repelling periodic orbit $O$, with $O$
bounding a disc $D\subset\operatorname{Int}B_0$. Spatial rescaling
and a constant change of speed allow the vector field $V$ on $B_1$
to be chosen so that
\begin{equation}\label{eq:density-small-local-model}
 \|V\|_{C^0}<\frac{\eta_0}{8}.
\end{equation}
We arrange that $V|_D$ is the transported base vector field used in
Theorem~\ref{thm:disc}.

Choose a smooth function $\chi\colon M\to[0,1]$ supported in
$\operatorname{Int}B_1$ and identically one on a neighborhood of
$B_0$, and define
\[
 X_0=\chi V+(1-\chi)X\quad\text{on }B_1,
 \qquad X_0=X\quad\text{on }M\setminus B_1.
\]
In view of \eqref{eq:density-small-vector-field} and
\eqref{eq:density-small-local-model}, this $C^1$ vector field
satisfies
\begin{equation}\label{eq:density-local-interpolation}
 X_0=V\text{ near }B_0,\qquad
 X_0=X\text{ on }M\setminus B_1,\qquad
 \|X_0-X\|_{C^0}<\frac{\eta_0}{3}.
\end{equation}
The interpolation may create additional or non-hyperbolic zeros in
$B\setminus B_0$. No separate index adjustment is needed: the
relative perturbation in the next paragraph is performed on this
transition region while the prescribed configuration in $B_0$ is
kept fixed.

Apply smooth approximation relative to a neighborhood of $B_0$ and
then the Morse--Smale density theorem on the complement of that
neighborhood; see \cite[Chapter~4]{Palis}. The perturbations in the
surface proof are local and can therefore be supported away from the
protected neighborhood of $B_0$. Combining this with
\eqref{eq:density-local-interpolation}, we obtain a $C^\infty$
Morse--Smale vector field $Y$ satisfying
\begin{equation}\label{eq:density-relative-morse-smale}
 Y=V\text{ near }B_0,\qquad
 \|Y-X\|_{C^0}<\frac{2\eta_0}{3}.
\end{equation}
In particular, the sink, the repelling periodic orbit $O$, the disc
$D$, and the prescribed collar remain unchanged.

Finally, replace $Y|_D$ by the slowed-down model of
Theorem~\ref{thm:disc}. By \eqref{eq:time-changed-coordinates}, its
generating vector field on $D$ is $\rho V$, where $0\le\rho\le1$ and
$\rho\equiv1$ on the collar of $O$. It therefore glues smoothly to
$Y$ on $M\setminus D$, and
\begin{equation}\label{eq:density-final-perturbation}
 \|\rho V-V\|_{C^0}
 \le\|V\|_{C^0}<\frac{\eta_0}{8}.
\end{equation}
Let $Z$ denote the resulting $C^\infty$ vector field.
Equations~\eqref{eq:density-relative-morse-smale} and
\eqref{eq:density-final-perturbation} give
\begin{equation}\label{eq:density-final-estimate}
 \|Z-X\|_{C^0}
 \le\|Z-Y\|_{C^0}+\|Y-X\|_{C^0}
 <\left(\frac18+\frac23\right)\eta_0<\eta_0.
\end{equation}
Thus $Z\in\mathcal U$ by \eqref{eq:density-neighborhood}.

The argument based on
\eqref{eq:collar-projection-estimates}--\eqref{eq:projected-pseudo-orbit-error}
shows that the flow generated by $Z$ has the oriented shadowing
property. The argument based on
\eqref{eq:standard-pseudo-wait} and
\eqref{eq:global-nonshadowing-accuracy} shows that it does not have
the standard shadowing property. Since $X$ and $\mathcal U$ were
arbitrary, the required $C^\infty$ flows are $C^0$-dense among the
$C^1$ flows possessing a singularity.
\end{proof}

\begin{corollary}\label{cor:dense-all-surface}
Let $M$ be a compact, boundaryless and oriented surface with
$\chi(M)\neq0$. Then the set of $C^\infty$ flows on $M$ with the
oriented shadowing property but without the standard shadowing
property is $C^0$-dense in the space of all $C^1$ flows on $M$, where
the $C^0$ topology is induced by their generating vector fields.
\end{corollary}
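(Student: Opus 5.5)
This follows from Theorem~\ref{thm:denseflows} together with the Poincar\'e--Hopf theorem, so the plan is to reduce density among all $C^1$ flows to density among $C^1$ flows with a singularity.

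\textbf{Step 1: every vector field in the space has a zero.} Let $X$ be any $C^1$ vector field on $M$, generating one of the $C^1$ flows in question. Suppose $X$ had no zero. The Poincar\'e--Hopf theorem says the sum of the indices of the zeros of a continuous vector field with isolated zeros on a closed surface equals $\chi(M)$. A nonvanishing field has an empty index sum, which would force $\chi(M)=0$. Since $\chi(M)\neq0$, this is impossible, so $X$ has a zero, and the flow it generates has a singularity.

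\textbf{Step 2: apply Theorem~\ref{thm:denseflows}.} By Step 1, the space of all $C^1$ flows on $M$ coincides with the space of $C^1$ flows on $M$ having a singularity. The two spaces also carry the same $C^0$ topology on generating vector fields, since one is the other. Theorem~\ref{thm:denseflows} asserts that $C^\infty$ flows with oriented shadowing but without standard shadowing are $C^0$-dense in the latter space. Hence they are $C^0$-dense in the former.

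\textbf{Where care is needed.} The density proof of Theorem~\ref{thm:denseflows} only uses a single zero $\sigma$ of $X$, around which the local model is inserted. It places no requirement that the zeros be isolated or nondegenerate. So Step 1 only needs the existence of one zero, and that is exactly what the Poincar\'e--Hopf argument gives, by contradiction.

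The only mild point is applying Poincar\'e--Hopf to a field that might have non-isolated zeros. This causes no trouble, because we only invoke it under the hypothesis that $X$ has no zeros at all, where the theorem applies directly and gives $\chi(M)=0$. I therefore expect no real obstacle. Orientability is needed only because it is a hypothesis of Theorem~\ref{thm:denseflows}.
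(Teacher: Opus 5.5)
Your proposal is correct and takes the same approach as the paper. Poincar\'e--Hopf forces every $C^1$ vector field on $M$ to vanish somewhere, so the space of all $C^1$ flows coincides with the space of $C^1$ flows having a singularity, and the density assertion of Theorem~\ref{thm:denseflows} then applies directly. Your remark that Poincar\'e--Hopf is only invoked for a nonvanishing field, so non-isolated zeros cause no difficulty, is a correct extra detail that the paper leaves implicit.
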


\begin{proof}
By the Poincar\'e--Hopf theorem, every $C^1$ vector field on $M$ has a
zero and hence its flow has a singularity. The conclusion therefore
follows directly from the density assertion in
Theorem~\ref{thm:denseflows}.
\end{proof}

\section{Regularity of Reparametrizations}\label{sec: regularity}

This section is devoted to studying the regularity of the reparametrizations involved in shadowing. In particular, we show that for a flow with the oriented shadowing property, one may assume strong tracing of pseudo-orbits, provided they remain away from the set of singularities. Before delving into this analysis, we first lay the groundwork by obtaining some properties of cross-sections at regular points of the flow.

 \begin{lemma}\label{lemma_t_0}
Let $U\subset X$ be a non-empty open subset such that 
$\overline{U}\cap \Sing(\phi)=\emptyset$. Then there exist 
$t_0>0$ and $\delta>0$ such that:

\begin{enumerate}
\item Every $x\in \overline{U}$ admits a cross-section $S_x$ 
of time $t_0$ such that 
$B_{\frac{\delta}{2}}(x)\subset \phi_{(-t_0,t_0)}(S_x)$.

\item For every $x\in \overline{U}$,
\[
B_{\delta/2}(x)\subset \phi_{[-t_0,t_0]}(S_x).
\]

\item Any periodic orbit intersecting $\overline{U}$ has period 
greater than $t_0$.
\end{enumerate}
\end{lemma}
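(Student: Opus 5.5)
The plan is to derive all three items from Lemma~\ref{lemma_cross-sections} by a compactness argument on $\overline U$. Here $\overline U$ is compact, as a closed subset of $X$, and it is contained in $X\setminus \Sing(\phi)$.

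\textbf{Step 1: a finite cover by flow boxes.} For each $x\in\overline U$, Lemma~\ref{lemma_cross-sections} gives a time $t_x>0$ and a cross-section $S_x$ of time $t_x$ with $x\in S_x^*$. The set
\[
V_x:=\phi_{(-t_x/2,t_x/2)}(S_x^*)
\]
is an open neighbourhood of $x$, since $\phi_{(-t_x,t_x)}(S_x^*)$ is open and the flow-box structure passes to half-times. By compactness, finitely many of these sets, say $V_{x_1},\dots,V_{x_k}$, cover $\overline U$. Set $t_0:=\tfrac14\min_j t_{x_j}$, and let $\delta$ be a Lebesgue number of this cover (with a further reduction below).

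\textbf{Step 2: re-centring the sections.} For $y\in\overline U$, pick $j$ with $B_\delta(y)\subset V_{x_j}$, and write $y=\phi_s(y')$ with $y'\in S_{x_j}^*$ and $|s|<t_{x_j}/2$. We then need a section of time $t_0$ through $y$ itself. The natural candidate is the flowed section $\phi_s(S_{x_j})$, possibly shrunk to a smaller compact piece around $y$. Two points must be checked:
\begin{itemize}
\item the injectivity condition (2) of Definition~\ref{def:cross_section} holds for time $t_0\le t_{x_j}/4$, since the return time of $\phi_s(S_{x_j})$ to itself equals that of $S_{x_j}$;
\item $\phi_{(-t_0,t_0)}$ of the section contains a ball around $y$.
\end{itemize}

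\textbf{Main obstacle.} The second point is where uniformity is needed: the radius must not depend on $y$, even though the time window is shrunk to $t_0$. I would handle this by uniform continuity of $\phi$ on $[-t_{\max},t_{\max}]\times X$. Concretely, I would show that the map $(w,\tau)\mapsto\phi_\tau(w)$ on $S_{x_j}^*\times(-t_{x_j},t_{x_j})$ is a homeomorphism onto an open set; this follows from the flow-box property and invariance of domain is not needed, since the openness is already given. Then the region
\[
\phi_{(s-t_0,\,s+t_0)}\bigl(\text{a compact neighbourhood of } y' \text{ in } S_{x_j}\bigr)
\]
is open and contains $y$. A uniform radius comes from continuity of the inverse chart on the compact set $\phi_{[-t_{x_j}/2,t_{x_j}/2]}(\overline{W_j})$, where $W_j\Subset S_{x_j}^*$ is chosen with $\overline U\subset\bigcup_j\phi_{(-t_{x_j}/2,t_{x_j}/2)}(W_j)$. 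The inverse chart is uniformly continuous there, so there is $\delta>0$ such that $d(y,w)<\delta/2$ forces both the section coordinates and the time coordinates of $y$ and $w$ to be close. Decreasing $\delta$ further makes the time difference smaller than $t_0$ and keeps the section coordinate inside a compact piece of $S_{x_j}$ of controlled size. This yields (1), and (2) follows immediately from (1).

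\textbf{Step 3: item (3).} Suppose a periodic orbit meets $\overline U$ at $y$ and has period $\pi(y)\le t_0$. Then $\phi_{\pi(y)}(y)=y$ with $0<\pi(y)\le t_0\le t_{x_j}$. This contradicts the injectivity condition $S\cap\phi_{[-T,T]}(y)=\{y\}$ for the section through $y$ constructed in Step 2, since $\phi_{[-t_0,t_0]}(y)$ would then reach $y$ at a nonzero time, which contradicts the unique time-coordinate in the flow box. Shrinking $t_0$ by a factor of two gives the strict inequality.
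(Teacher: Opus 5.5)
Your argument is sound, relying on the same flow-box input the paper uses, but it is organised differently. The paper obtains $t_0$ by contradiction. For $x_n\to x$ with section times tending to $0$, it trims the sections to $S_{x_n}\cap\overline{B_\varepsilon(x)}$. These lie in $\phi_{[-t_x/3,t_x/3]}(S_x)$ and hence, it argues, have time at least $t_x/3$. It then takes $\delta$ from compactness of $\overline U$ in one line and gets (3) from the section condition.

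You instead take a finite subcover by half-time flow boxes and read off $t_0=\frac14\min_j t_{x_j}$ directly. You build the section through an arbitrary $y$ by transporting a model section, $S_y=\phi_s(S_{x_j})$. The uniform radius then comes from a Lebesgue number combined with uniform continuity of the inverse chart on the compact pieces $\phi_{[-t_{x_j}/2,t_{x_j}/2]}(\overline{W_j})$.

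This re-centring is exactly what the paper's one-line compactness step glosses over. A Lebesgue number for the cover by the sets $\phi_{[-t_0,t_0]}(S_x)$ places $B_{\delta/2}(x)$ inside the flow box of some possibly different point. It does not place it inside $\phi_{(-t_0,t_0)}(S_x)$ for a section through $x$ itself. In exchange, the paper's route is shorter, keeps a trimmed piece of each point's own section, and never needs inverse charts.

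Two points of precision. First, injectivity of $(w,\tau)\mapsto\phi_\tau(w)$ on $S\times(-t,t)$ would need condition (2) of Definition~\ref{def:cross_section} with time $2t$. So your claim on the full window $(-t_{x_j},t_{x_j})$ is too strong; the half window, which is all you actually use, is fine.

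Second, even on the half window, injectivity fails as soon as some point of $S$ has period below $t_{x_j}$. Condition (2) does not exclude this, since such an orbit still meets $S$ in a single point. So your proof of (3), like the paper's ``intersects $S_p$ at least twice'', really rests on the product structure of the flow box. The same applies to the openness of the half-time boxes in Step 1.

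You can remove this dependence by proving (3) first. If $x_n\in\overline U$ had periods $p_n\to0$, uniform continuity of $\phi$ would give $\operatorname{diam}\mathcal O(x_n)\to0$. Then any limit point of $(x_n)$ would be a singularity in $\overline U$, which is impossible. Run the same argument on a compact singularity-free neighbourhood of $\overline U$, choose the $W_j$ inside it, and take the $t_{x_j}$ below the resulting period bound. This justifies the chart injectivity and openness used in your Steps 1 and 2 without circularity.
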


\begin{proof}
Since $\overline{U}$ is compact and disjoint from $\Sing(\phi)$, there exists $\eta>0$ such that
\[
d(\overline{U},\Sing(\phi))>\eta.
\]
By Lemma~\ref{lemma_cross-sections}, for each $x\in\overline{U}$ there exist $t_x>0$ and a local cross-section $S'_x$ through $x$ of time $t_x$.  Observe that for any $x\in \overline{U}$ and every $0<t<t_x$, $S_x$ is also a cross-section of time $t$. Lets start by proving the existence of $t_0$ in item (1).   Assume by contradiction that no $t_0>0$ can satisfy the Lemma. Hence, for every $n>0$, there is a point $x_n\in \overline{U}$ and a decreasing sequence $t_n\to 0$ such that such that:

\begin{equation}\label{condition:cross-sec}
    S_{x_n} \textrm{ is a cross-section of time } t_n, \textrm{ but not of time } t , \textrm{ for every } t>t_n. 
\end{equation}

Assume, by compactness of $\overline U$, that $x_n\to x\in \overline{U}$ and observe that $S_x$ is a cross-section of time $t_x>0$. Moreover, $\phi_{[-t_x,t_x]}(S_x)$ is a closed neighborhood of $x$. Fix $\eps>0$ such that $$B_{2\eps}(x)\subset \phi_{[-\frac{t_x}{3},\frac{t_x}{3}]}(S_x)$$ and $n_0>0$ such that  $x_n\in B_{\eps}(x)$ and $t_{x_n}\leq \frac{t_x}{3}$, for any $n\geq n_0$.  So, by taking $n>n_0$ and considering $S'_{x_n}=S_{x_n}\cap \overline{B_{\eps}(x)}$, we conclude that $S'_{x_n}$ is a cross-section through $x_n$ of time at least $\frac{t_x}{3}$. Indeed, this is because $S'_{x_n}\subset \phi_{[-\frac{t_x}{3},\frac{t_x}{3}]}(S_x)\subset \phi_{[t_x,t_x]}(S_x)$. But this contradicts \eqref{condition:cross-sec}, proving the existence of $t_0$.  

Now, observe that since $\overline U$ is compact and the family $$\{\phi_{[-t_0,t_0]}(S_x); x\in \overline{U}\}$$
forms a cover for $\overline U$ formed by closed neighborhoods. So, the compactness of $\overline{U}$ gives $\delta>0$ such that $$B_{\frac{\delta}{2}}(x)\subset \phi_{(-t_0,t_0)}(S_x),$$ for every $x\in \overline{U}$.  This simultaneously completes the proofs of items (1) and (2).

Finally, let $p\in\overline{U}$ be a periodic point with period $\pi(p)$. If $\pi(p)\le t_0$, then the orbit segment $\{\phi_t(p): t\in[-t_0,t_0]\}$ intersects the cross-section $S_p$ at least twice, contradicting the definition of a cross-section. Hence, $\pi(p)>t_0$, proving (3).
\end{proof}

\begin{remark}
    Observe that both $t_0$ and $\delta$ in the previous lemma depend on  $d(Sing(\phi),U)$. In other words, $t_0$ and $\delta$ may shrink as long as $U$ gets closer to $Sing(\phi)$.  
\end{remark}

We now turn our attention to the sets $Rep$ and $Rep(\eps)$.

\begin{lemma}\label{lemma_proprep}
    Let \(K\subset X\backslash Sing(\phi)\) be a compact set, then there is a \(T_0>0\) which satisfies the following:
    \begin{enumerate}
        \item\label{lemma_proprep:1} For any \(\ka\in (0,T_0)\), there is a \(\lambda>0\) such that for \(x\in K\) and \(0\leq s,t\leq T_0,\ d(\phi_s(x), \phi_t(x))<\lambda\) implies \(|s-t|<\ka\).
        \item\label{lemma_proprep:2} For any \(T\in (0,T_0)\), there is \(\gamma>0\) such that for 
         any $x\in K$,
        \(\phi|_{[0,t]}(x)\subset B_\gamma(x)\) implies that \(t\in [0,T]\).
        \item\label{lemma_proprep:3} for any \(\eps\in (0,1)\) and \(T\in (0,T_0)\), there is \(\delta>0\) such that for any \(x,y\in 
         K\), if \(g\in Rep\) satisfies \(d(\phi_t(x), \phi_{g(t)}(y))\leq \delta\) for all \(t\in[0,T]\), then 
        \[
        \left|\frac{g(T)}{T}-1\right|\leq \eps.
        \]
    \end{enumerate}
\end{lemma}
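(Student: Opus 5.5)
The plan is to fix $T_0$ first, using only that $K$ is compact and disjoint from $\operatorname{Sing}(\phi)$. The natural choice is to cover a neighbourhood of $K$ by finitely many flow boxes given by Lemma~\ref{lemma_cross-sections}; equivalently, apply Lemma~\ref{lemma_t_0} to an open $U\supset K$ with $\overline U\cap\operatorname{Sing}(\phi)=\emptyset$. This yields $t_0>0$ and $\delta_1>0$ such that every $x\in\overline U$ has a cross-section $S_x$ of time $t_0$ with $B_{\delta_1/2}(x)\subset\phi_{(-t_0,t_0)}(S_x)$, and every periodic orbit meeting $\overline U$ has period $>t_0$. I will take $T_0$ as a small fraction of $t_0$, say $T_0=t_0/4$, so that every orbit segment of length at most $2T_0$ starting in $K$ is injective. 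Together with compactness, this injectivity drives all three items.

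For item~\eqref{lemma_proprep:1}, I would argue by contradiction. Suppose there are $x_n\in K$ and $s_n,t_n\in[0,T_0]$ with $|s_n-t_n|\ge\kappa$ and $d(\phi_{s_n}(x_n),\phi_{t_n}(x_n))\to0$. Passing to subsequences, $x_n\to x\in K$, $s_n\to s$ and $t_n\to t$ with $|s-t|\ge\kappa$. Continuity of the flow then gives $\phi_s(x)=\phi_t(x)$, so $x$ is periodic with period at most $|s-t|\le T_0<t_0$, or $x$ is singular. Both contradict the choice of $t_0$.

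For item~\eqref{lemma_proprep:2}, I would again use compactness, but now through a uniform displacement bound. The function $x\mapsto d(x,\phi_T(x))$ is continuous and positive on $K$: positivity holds because $T<t_0$ rules out periodicity with small period, and $K$ contains no singularities. So it has a positive minimum $m$, and I take $\gamma=m/2$. If $\phi_{[0,t]}(x)\subset B_\gamma(x)$ with $t>T$, then $\phi_T(x)\in B_\gamma(x)$, which is a contradiction.

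Item~\eqref{lemma_proprep:3} is the main obstacle, because $g$ is only an increasing homeomorphism and $y$ is merely close along the segment. I would again argue by contradiction, with $x_n,y_n\in K$, $g_n\in\mathrm{Rep}$ and $\delta_n\to0$ but $|g_n(T)/T-1|>\varepsilon$. The key sub-step is to bound $g_n(T)$ a priori. Since $g_n$ is continuous with $g_n(0)=0$, the segment $\phi_{[0,g_n(T)]}(y_n)$ stays within $\delta_n$ of $\phi_{[0,T]}(x_n)$, whose diameter is controlled. Using the cross-section $S_{x_n}$ of time $t_0$ and $2T\le 2T_0<t_0$, the $y_n$-segment cannot exit the flow box and re-enter. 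So, by a first-exit argument along the box, it has length at most about $t_0$, hence $g_n(T)$ is bounded. Then I pass to limits $x_n\to x$, $y_n\to y$ and $g_n(T)\to\tau$. From $d(x_n,y_n)\le\delta_n$ I get $x=y$, and from the endpoint condition I get $\phi_T(x)=\phi_\tau(x)$. Item~\eqref{lemma_proprep:1} (applicable once $\tau\le T_0$-type bounds are arranged, possibly after shrinking $T_0$ relative to $t_0$) then forces $\tau=T$, contradicting $|\tau/T-1|\ge\varepsilon$. The delicate point is the a priori bound on $g_n(T)$, which needs the flow-box and uniqueness-of-crossing property from Lemma~\ref{lemma_t_0}.
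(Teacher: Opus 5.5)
Your proof is correct in substance. Item (1) is the paper's argument, but items (2) and (3) take a different route.

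\textbf{Item (2).} You use the positive minimum of $x\mapsto d(x,\phi_T(x))$ on $K$, with positivity coming from the period bound of Lemma~\ref{lemma_t_0}. The paper instead argues by contradiction that a limit segment $\phi_{[0,T]}(x)$ would have diameter $0$, forcing $x\in\operatorname{Sing}(\phi)$; that version does not even need $T<T_0$. Both are fine.

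\textbf{Item (3).} The paper's proof is direct and quantitative:
\begin{itemize}
\item if $g(T)\le T$, it applies (1) to $\phi_T(x)$ and $\phi_{g(T)}(x)$;
\item if $g(T)\ge T$, it uses $g^{-1}(T)\le T$ and gets $|g^{-1}(T)-T|$ small from (1). It deduces that $\phi_{[T,g(T)]}(y)$ stays in a small ball around $\phi_T(y)$, and then (2) bounds $g(T)-T$.
\end{itemize}
This gives $\delta$ explicitly and uses no flow-box coordinates. Your contradiction-and-compactness argument also works.

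\textbf{The a priori bound is not needed.} The step you single out as delicate, bounding $g_n(T)$, is self-imposed. If $g_n(T)>(1+\varepsilon)T$, evaluate the tracing inequality at $t_n:=g_n^{-1}((1+\varepsilon)T)\in[0,T]$. Passing to the limit gives $\phi_{(1+\varepsilon)T}(x)=\phi_s(x)$ with $s\in[0,T]$. Then $x$ is periodic with period at most $(1+\varepsilon)T<2T_0<t_0$, contradicting Lemma~\ref{lemma_t_0}. The case $g_n(T)<(1-\varepsilon)T$ is bounded automatically.

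\textbf{A mismatch in your last step.} Your flow-box bound controls $g_n(T)$ only at scale $t_0$, not $T_0$. Shrinking $T_0$ does not change that, so (1) cannot be applied to $\tau$. You would have to invoke the period bound of Lemma~\ref{lemma_t_0} directly, and this needs the sharp estimate $g_n(T)<t_0$, not merely ``about $t_0$''. You get that sharp estimate only from a box such as $\phi_{(-t_0/2,t_0/2)}(S_{x_n})$, where flow-time coordinates are unique. The full box $\phi_{(-t_0,t_0)}(S_{x_n})$ allows returns to $S_{x_n}$ after times in $(t_0,2t_0)$, so its coordinates need not be unique and the bound it gives is too weak. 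The truncation above avoids all of this.
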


\begin{proof}
To begin with, notice that by assumption $K\subset X\setminus Sing(\phi)$ and hence there is an open neighborhood $U$ of $K$ such that $\overline{U}\cap Sing(\phi)=\emptyset$.  So, let $t_0,\delta>0$ be given by Lemma \ref{lemma_t_0}, with respect to $U$, and fix $T_0=\frac{t_0}{2}$. By possibly reducing $T_0$, we can assume $\phi_{[-T_0,T_0]}(K)\subset \overline{U}$.  

If \eqref{lemma_proprep:1} does not hold, there are $\kappa>0$, $\lambda_n\to 0$, $x_n\in K$ and  $s_n,t_n\in [0,T_0]$ such that $d(\phi_{s_n}(x_n),\phi_{t_n}(x_n))\leq\lambda_n$ and $|s_n-t_n|>\kappa$. Assume,  $x_n\to x\in K$, $s_n\to s$ and $t_n\to t$,  going to subsequences if needed. In this case we have $s\neq t$, but $\phi_t(x)=\phi_s(x)$. Since $K\cap Sing(\phi)=\emptyset$, we conclude $x$ is a periodic point with period $\pi(x)\leq 2T_0=t_0$. But this contradicts the choice of $t_0$ 
 in Lemma~\ref{lemma_t_0}, therefore \eqref{lemma_proprep:1} holds.

Next, suppose \eqref{lemma_proprep:2} does not hold. There exist $0<T<T_0$, 
 $t_n>0$, 
$\gamma_n\to 0$  and $x_n\in K$ so that $\phi_{[0,t_n]}(  
 x_n
)\subset B_{\gamma_n}(x_n)$, but $t_n\geq T$. Assume $x_n\to x\in K$. In this case, 
$$diam(\phi_{[0,T]}(
 x))=\lim\limits_{n\to\infty}diam(\phi_{[0,T]}(x_n)) \leq \lim\limits_{n\to\infty}diam(\phi_{[0,t_n]}(x_n))\leq\lim\limits_{n\to\infty} 2\gamma_n =0,$$ 
 that is, \(\phi_t(x)=x\) for every \(t\in[0,T]\) and hence $x\in Sing(\phi)$ contradicting $K\cap Sing(\phi)=\emptyset$. So, \eqref{lemma_proprep:2} holds.   

Now, let us prove \eqref{lemma_proprep:3}. Fix $0<\eps<1$ and $0<T<T_0$. \(\overline U\) being compact, we apply item \eqref{lemma_proprep:2} to $\overline{U}$, that is for \(\eps T\), there is $\gamma>0$ so  that if $\phi_{[0,
t
]}(x)\subset B_\gamma(x)$ 
and $x\in \overline{U}$, then $t\leq  \eps T$.
There exists \(\eta\in(0,\eps T)\) such that the continuity of $\phi$ and compactness of $X$ implies 
$d(x,\phi_t(x))\leq \frac{\gamma}{3}$, for every $x\in X$ and $t\in [0,\eta]$. 

By item \eqref{lemma_proprep:1}, there is $\lambda>0$ such that if $x\in K$ and $d(\phi_t(x),\phi_s(x))\leq \lambda$, then $|s-t|\leq \eta$. Fix $$\alpha=\min\left\{\frac{\lambda}{2},\frac{\gamma}{3}\right\}.$$

Let $0<\eps'\leq\alpha$ be so that 
$$
d(\phi_{t}(x),\phi_{t}(y))\leq \alpha, \textrm{ whenever } d(x,y)\leq \eps'
 \text{ and }|t|\leq T.
$$ 
Let $x,y\in K$ and suppose $g\in Rep$ satisfies $$d(\phi_t(x),\phi_{g(t)}(y))\leq \eps',$$ for every $t\in [0,T]$. 
 Note that in particular $d(x,y)\leq \eps'$.
If $g(T)\leq T$, we obtain: $$
d(\phi_{T}(x),\phi_{g(T)}(x))\leq d(\phi_T(x),\phi_{g(T)}(y))+(\phi_{g(T)}(y),\phi_{g(T)}(x))\leq \eps'+\alpha\leq\lambda,
$$
and hence $|g(T)-T|\leq  \eta<\eps T
$.

On the other hand, if $g(T)\geq T$, then  $g^{-1}(T)\leq T$ and therefore
$$d(\phi_{g^{-1}(T)}(x),\phi_T(x))\leq d(\phi_{g^{-1}(T)}(x),\phi_{T}(y))+ d(\phi_{T}(y),\phi_T(x))\leq \eps'+\alpha,$$
hence $|g^{-1}(T)-T|\leq \eta$. But this implies 
$$
d(\phi_{
 t}(x),\phi_{ T}
(x))\leq\frac{\gamma}{3},
$$ 
for every $t\in [g^{-1}(T),
 T]$. Furthermore, if $t\in [T,g(T)]$ we have
\begin{align*}
d(\phi_t(y), \phi_T(y)) & \leq d(\phi_{g(g^{-1}(t))}(y), \phi_{g^{-1}(t)}(x)) + d(\phi_{g^{-1}(t)}(x), \phi_T(x))+ d(\phi_T(x), \phi_{T}(y))\\
&\leq 
 \eps' + \frac{\gamma}{3} +\alpha\\
 &<\gamma
\end{align*}
Hence $ \phi_{[0,g(T)-T]}(\phi_T(y))\subset B_{\gamma}(\phi_T(y))$. On the other hand, since $T\leq T_0$, we obtain  $\phi_T(y)\in \overline{U}$ and the choice of $\gamma$ implies $|g(T)-T|\leq\eps T$.  Therefore \[
        \left|\frac{g(T)}{T}-1\right|\leq \eps,
        \] this completes the proof. 
\end{proof}

Recall that for any $\delta$-$T$-pseudo-orbit $P=(x_i,t_i)$, we denote   
\begin{equation}\label{eq:mathcal{P}}
    \SP(P)= \bigcup_{i\in \Z}\phi_{[0,t_i]}(x_i).
\end{equation}

\begin{theorem}\label{thm_regshad}

Let $\phi$ be a continuous flow with the oriented shadowing property and let
$\eta>0$.  For every $0<\varepsilon<\eta/2$ there exist
$0<\delta<\eta/2$ and $T>0$ such that every $\delta$--$T$-pseudo-orbit
$P$ satisfying
\[
 d(\mathcal P(P),\Sing(\phi))>\eta
\]
is strongly $\varepsilon$-traced.
\end{theorem}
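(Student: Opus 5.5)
Take the compact set $K=\{x: d(x,\Sing(\phi))\ge \eta/2\}$. It contains no singularity, so Lemma~\ref{lemma_proprep} applies to it and yields a constant $T_0>0$. The plan has two steps. First, use oriented shadowing with a very small accuracy $\varepsilon'\ll\varepsilon$ to get some $h\in\mathrm{Rep}$ tracing $P$. Second, cut the real line into windows of length comparable to a fixed $T_1<T_0$. On each window, apply item~\eqref{lemma_proprep:3} of Lemma~\ref{lemma_proprep} to show that $h$ stretches time there by a factor within $\varepsilon/3$ of one. Finally, replace $h$ by the piecewise linear map interpolating its values at the window endpoints. That map lies in $\mathrm{Rep}(\varepsilon)$ and still traces with error at most $\varepsilon$.

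\emph{Choice of constants.} Fix $T_1\in(0,T_0)$ and set $\varepsilon_1=\varepsilon/3$. Item~\eqref{lemma_proprep:3} gives $\delta_1>0$ for $(\varepsilon_1,T_1)$: whenever $x,y\in K$, $g\in\mathrm{Rep}$ and $d(\phi_t(x),\phi_{g(t)}(y))\le\delta_1$ on $[0,T_1]$, we get $|g(T_1)/T_1-1|\le\varepsilon_1$. Next choose $\varepsilon'<\min\{\delta_1/2,\eta/2,\varepsilon/3\}$ so small that points $\varepsilon'$-close stay $\delta_1/2$-close under the flow for times up to $2T_1$. Take $\delta$ to be the oriented-shadowing constant for $\varepsilon'$, shrunk below $\eta/2$. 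Take $T=T_1$; the remark after Definition~\ref{def:standsh} lets us pass from time $1$ to time $T$, and we may assume every $t_i\in[T,2T)$.

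\emph{Window estimates.} Let $z$ $\varepsilon'$-trace $P$ via $h$. Every point $\gamma_P(t)$ lies at distance $>\eta$ from $\Sing(\phi)$. Since $\varepsilon'<\eta/2$, every point $\phi_{h(t)}(z)$ lies in $K$ as well. Split each interval $[S_i,S_{i+1}]$ into pieces $[u_k,u_{k+1}]$ of length exactly $T_1$, plus at most one shorter remainder. On a full piece $[u_k,u_k+T_1]$, put $x=\gamma_P(u_k)$ and $y=\phi_{h(u_k)}(z)$, and define
\[
g(s)=h(u_k+s)-h(u_k),
\]
extended to an element of $\mathrm{Rep}$. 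On this piece the pseudo-orbit follows the true orbit of $x$, so item~\eqref{lemma_proprep:3} applies and gives
\[
\left|\frac{h(u_{k+1})-h(u_k)}{T_1}-1\right|\le\varepsilon_1 .
\]
Remainder pieces have length $\ell<T_1$. For these, apply the same lemma with $T$ replaced by $\ell$. This needs uniformity of $\delta_1$ over all $\ell$ in a compact range $[\ell_0,T_1]$. Alternatively, choose the subdivision so that every piece has length in $[T_1/2,T_1]$, which is possible since $t_i\ge T_1$. Then invoke the lemma at finitely many lengths, or re-run its proof with $T$ ranging over a compact interval; I expect the proof to go through uniformly.

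\emph{Modified reparametrization and tracing error.} Define $\tilde h$ to agree with $h$ at the nodes $u_k$ and to be affine in between. Then $\tilde h(0)=h(0)=0$ if $0$ is taken as a node. The slope on each piece is within $\varepsilon_1$ of $1$, so every difference quotient is a weighted average of slopes and $\tilde h\in\mathrm{Rep}(\varepsilon)$. For the tracing error, fix $t\in[u_k,u_{k+1}]$. Both $h(t)$ and $\tilde h(t)$ lie in $[h(u_k),h(u_{k+1})]$, an interval of length at most $(1+\varepsilon_1)T_1$. So $\phi_{\tilde h(t)}(z)$ and $\phi_{h(t)}(z)$ lie on the same short orbit arc, which is $\varepsilon'$-close to the arc $\gamma_P([u_k,u_{k+1}])$.

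\emph{Main obstacle.} The hard step is bounding $d(\phi_{\tilde h(t)}(z),\phi_{h(t)}(z))$ by something of order $\varepsilon$. Knowing the total elapsed times agree is not enough: we need that nearby times along orbits in $K$ give nearby points. I plan to use continuity of the flow and item~\eqref{lemma_proprep:1} of Lemma~\ref{lemma_proprep} on the piece to show $|h(t)-t'|$ is small for the matching time $t'$. Concretely, comparing $\phi_{h(t)}(z)$ with $\gamma_P(t)$ and using the flow-box injectivity of Lemma~\ref{lemma_t_0}, we get $|h(t)-h(u_k)-(t-u_k)|\le\kappa$ for a prescribed small $\kappa$. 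Hence $|h(t)-\tilde h(t)|\le\kappa+\varepsilon_1T_1$. Shrinking $T_1$ makes this time gap small, and uniform continuity in time then gives a spatial error below $\varepsilon/2$. The total tracing error is therefore less than $\varepsilon$.
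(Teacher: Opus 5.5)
Your proposal is correct and is essentially the paper's own proof: trace with a much smaller accuracy, cut the time line into windows whose lengths lie in a fixed compact range, and apply a version of Lemma~\ref{lemma_proprep}\,(3) that is uniform over that range of lengths. The paper proves this uniformity by a compactness argument, and your plan of rerunning the lemma's proof with $\varepsilon T$ replaced by $\varepsilon$ times the minimal window length also works; then replace $h$ by its piecewise affine interpolation at the nodes. The step you call the main obstacle needs no appeal to item~(1): $h(t)$ and $\tilde h(t)$ both lie in $[h(u_k),h(u_{k+1})]$, an interval of length at most $(1+\varepsilon_1)T_1$, so it suffices to fix $T_1$ at the outset so that $d(w,\phi_s(w))<\varepsilon/2$ for all $w\in X$ and $|s|\le(1+\varepsilon_1)T_1$, which is exactly how the paper uses \eqref{eq:short-motion}.
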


\begin{proof}
	Start by defining
	\[
	K_0:=\{x\in X:d(x,\Sing(\phi))\geq\eta\},
	\qquad
	K_1:=\{x\in X:d(x,\Sing(\phi))\geq\eta/2\}.
	\]
	If \(K_0=\emptyset\), the theorem holds trivially, so assume that
\(K_0\neq\emptyset\) and put
	\[
	a:=\min\{\varepsilon/4,1/4\}.
	\]
	
	We first record a uniform consequence of
	Lemma~\ref{lemma_proprep}. After choosing \(T>0\) sufficiently small,
	there is \(\rho>0\) such that, for every \(L\in[T,2T]\), every
	\(x,y\in K_1\), and every \(q\in\mathrm{Rep}\),
	\begin{equation}\label{eq:uniform-time-control}
		\Bigg[ d\bigl(\phi_s(x),\phi_{q(s)}(y)\bigr)\leq\rho\quad
		\text{for all } 0\leq s\leq L \Bigg]
		\quad\Longrightarrow\quad
		\left|\frac{q(L)}{L}-1\right|\leq a.
	\end{equation}
	We may simultaneously require
	\begin{equation}\label{eq:short-motion}
		d(z,\phi_s(z))<\varepsilon/4
		\quad\text{for every }z\in X\text{ and }|s|\leq3T.
	\end{equation}
	
	For completeness, we justify the uniformity in
	\eqref{eq:uniform-time-control}. The set
	\[
	K_2:=\phi_{[-1,1]}(K_1)
	\]
	is compact and contains no singularity. Choose \(T<1/2\) sufficiently small, so that
	\(2T\) is smaller than the constant \(T_0\) supplied by
	Lemma~\ref{lemma_proprep} for \(K_1\), and that \(aT\) is smaller than
	the corresponding constant for \(K_2\).
	
	If \eqref{eq:uniform-time-control} failed for every \(\rho>0\), there
	would be a sequence $\rho_n$ decreasing to $0$, $L_n\in[T,2T]$,
	$x_n,y_n\in K_1$, and $q_n\in\mathrm{Rep}$ violating the implication.
	Assume it is the case, and then passing to subsequences assume additionally that
	\[
	x_n\to x\in K_1,\qquad y_n\to x,\qquad L_n\to L\in[T,2T].
	\]
	
	Suppose first that \(q_n(L_n)\leq L_n\) along a subsequence. After
	taking another subsequence, we may assume that
	\(q_n(L_n)\to r\in[0,2T]\). This gives
	\[
	\phi_r(x)=\phi_L(x).
	\]
	Lemma~\ref{lemma_proprep}\,(1), with $r,L\leq 2T<T_0$ and arbitrarily small $\kappa$, forces \(r=L\),
	contradicting
	\[
	|q_n(L_n)-L_n|>aL_n\geq aT.
	\]
	
	It remains to consider the case \(q_n(L_n)>L_n\). Set
	\[
	r_n:=q_n^{-1}(L_n).
	\]
	
	Since \(0\leq r_n\leq L_n\leq2T\), every subsequence of
	\((r_n)\) has a convergent subsequence. Suppose that
	\(r_{n_k}\to r\in[0,2T]\). Since \(q_{n_k}(r_{n_k})=L_{n_k}\), the
	shadowing estimate at time \(r_{n_k}\) gives
	\[
	d\bigl(\phi_{r_{n_k}}(x_{n_k}),
	\phi_{L_{n_k}}(y_{n_k})\bigr)\leq\rho_{n_k}.
	\]
	Passing to the limit yields
	\[
	\phi_r(x)=\phi_L(x).
	\]
	Lemma~\ref{lemma_proprep}\,(1), applied with arbitrarily small
	\(\kappa>0\), implies \(r=L\). Thus every convergent subsequence of
	\((r_n)\) converges to \(L\), and consequently
\begin{equation}\label{eq:small-arc}
	r_n-L_n\longrightarrow0.
\end{equation}

	For \(u\in[L_n,q_n(L_n)]\), we have
	\(q_n^{-1}(u)\in[r_n,L_n]\),  hence by \eqref{eq:small-arc}
	and
	uniform continuity, for every $\gamma>0$ there is $N$ such that for all $n\geq N$ we have
	\begin{equation}\label{eq:lambda-largeN}
	\phi_{[L_n,q_n(L_n)]}(y_n)
	\subset B_{\gamma}\bigl(\phi_{L_n}(y_n)\bigr).
	\end{equation}
	The points \(\phi_{L_n}(y_n)\) lie in the fixed compact regular set
	\(K_2\). 
	Let
	$
	z_n:=\phi_{L_n}(y_n),$ and
	$D_n:=q_n(L_n)-L_n>0$.
	Then
	\[
	\phi_{[0,D_n]}(z_n)
	=\phi_{[L_n,q_n(L_n)]}(y_n).
	\]
	Apply Lemma~\ref{lemma_proprep}\,(2) to \(K_2\) and to the time
	\(aT\), and let \(\gamma>0\) be the constant supplied by that lemma.
	For all sufficiently large \(n\),
	\[
	\phi_{[0,D_n]}(z_n)\subset B_\gamma(z_n),
	\]
	because \(z_n\in K_2\) and $\gamma$ in \eqref{eq:lambda-largeN} can be arbitrarily small, provided $n$ is sufficiently large.
	Consequently, there is $n$ such that
	\[
	D_n=q_n(L_n)-L_n\leq aT\leq aL_n,
	\]
	contradicting \(q_n(L_n)-L_n>aL_n\).
	Indeed
	\eqref{eq:uniform-time-control} holds.
	
	Decrease \(\rho\), if necessary, so that
	\begin{equation}\label{eq:rho-choice}
		0<\rho<\min\{\varepsilon/4,\eta/2\}.
	\end{equation}
	By the oriented shadowing property, together with the equivalence of
	the positive lower-time conventions recalled after
	Definition~\ref{def:standsh}, choose
	\[
	0<\delta<\min\{\rho,\eta/2\}
	\]
	such that every \(\delta\)--\(T\)-pseudo-orbit is \(\rho\)-traced with
	a reparametrization in \(\mathrm{Rep}\).
	
	Let \(P=(x_i,t_i)_{i\in\mathbb Z}\) be a
	\(\delta\)--\(T\)-pseudo-orbit satisfying
	\[
	d(\mathcal P(P),\Sing(\phi))>\eta.
	\]
	For every \(i\in\mathbb Z\), set
	\[
	m_i:=\lfloor t_i/T\rfloor\geq1,
	\qquad
	L_i:=\frac{t_i}{m_i}.
	\]
	Then
	\[
	T\leq L_i<2T.
	\]
	Replace the \(i\)-th orbit segment in the initial pseudo-orbit by the \(m_i\) pairs
	\[
	\bigl(\phi_{\ell L_i}(x_i),L_i\bigr),
	\qquad
	\ell=0,\ldots,m_i-1.
	\]
	Concatenating these pairs for all \(i\in\mathbb Z\) produces a
	\(\delta\)--\(T\)-pseudo-orbit
	\[
	Q=(w_j,L_j)_{j\in\mathbb Z}.
	\]
	Its interpolated pseudo-trajectory is exactly that of \(P\), because the
	inserted jumps have error zero, and the remaining jumps are precisely
	the original ones. Reindex \(Q\) so that its cumulative times
	\((R_j)_{j\in\mathbb Z}\) satisfy \(R_0=0\).
	
	By the oriented shadowing property, there are \(y\in X\) and \(h\in\mathrm{Rep}\) such that
	\begin{equation}\label{eq:oriented-Q}
		d\bigl(\phi_{h(R_j+s)}(y),\phi_s(w_j)\bigr)\leq\rho
		\quad\text{for every }j\in\mathbb Z
		\text{ and }0\leq s\leq L_j.
	\end{equation}
	Taking \(s=0\), we see that
	\[
	u_j:=\phi_{h(R_j)}(y)
	\]
	lies in \(K_1\), because \(w_j\in\mathcal P(P)\subset K_0\) and
	\(\rho<\eta/2\).
	
For each \(j\in\mathbb Z\), define \(q_j\colon\mathbb R\to\mathbb R\) by
\[
q_j(s):=h(R_j+s)-h(R_j),\qquad s\in\mathbb R.
\]
Then \(q_j\in\mathrm{Rep}\), and \eqref{eq:oriented-Q} gives
\[
d\bigl(\phi_s(w_j),\phi_{q_j(s)}(u_j)\bigr)\leq\rho
\qquad\text{for }0\leq s\leq L_j.
\]
	Consequently, \eqref{eq:uniform-time-control} gives
	\begin{equation}\label{eq:node-slopes}
		1-a
		\leq
		\frac{h(R_{j+1})-h(R_j)}{R_{j+1}-R_j}
		\leq
		1+a
		\quad\text{for every }j\in\mathbb Z.
	\end{equation}
	
	Let \(g\colon\mathbb R\to\mathbb R\) be the piecewise affine map
	satisfying
	\[
	g(R_j)=h(R_j)
	\quad\text{for every }j\in\mathbb Z.
	\]
	Recal that \(R_{j+1}-R_j\geq T\), \(g(0)=0\), and so
	\eqref{eq:node-slopes} shows that \(g\) is an increasing
	homeomorphism. Moreover, every difference quotient of \(g\) is a
	weighted average of the slopes in \eqref{eq:node-slopes}. Therefore
	\[
	g\in\mathrm{Rep}(a)\subset\mathrm{Rep}(\varepsilon).
	\]
	
	Finally, let \(t=R_j+s\), where \(0\leq s\leq L_j\). Then
	\[
	0\leq g(t)-h(R_j)\leq(1+a)L_j<3T.
	\]
	Using \eqref{eq:short-motion}, \eqref{eq:oriented-Q} at \(s=0\), and
	again \eqref{eq:short-motion}, we obtain
	\begin{eqnarray*}
		d\bigl(\phi_{g(t)}(y),\phi_s(w_j)\bigr)
		&\leq&
		d\bigl(\phi_{g(t)}(y),\phi_{h(R_j)}(y)\bigr)
		+d\bigl(\phi_{h(R_j)}(y),w_j\bigr) \\
		&&\qquad\qquad
		+d\bigl(w_j,\phi_s(w_j)\bigr) \\
		&<&\varepsilon/4+\rho+\varepsilon/4
		<\varepsilon.
	\end{eqnarray*}
	Since \(Q\) and \(P\) have the same interpolated pseudo-trajectory,
	\(y\) strongly \(\varepsilon\)-traces \(P\) with the
	reparametrization \(g\).
\end{proof}

An immediate consequence of the previous result is the following

\begin{corollary}\label{cor:locshad}
    Let $\phi$ be a flow with the oriented shadowing property. If $\Lambda\subset X$ is a compact and invariant set without singularities, then $\Lambda$ has the local standard shadowing property.
\end{corollary}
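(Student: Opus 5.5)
The plan is to deduce the corollary directly from Theorem~\ref{thm_regshad}. Since $\Lambda$ is compact and $\Lambda\cap\Sing(\phi)=\emptyset$, and $\Sing(\phi)$ is closed, we have $\eta_\Lambda:=d(\Lambda,\Sing(\phi))>0$. (If $\Sing(\phi)=\emptyset$, any $\eta>0$ works and the distance condition below is vacuous.) Put $\eta:=\eta_\Lambda/2$. We then have to produce, for each $\varepsilon>0$ and $T>0$, a neighbourhood $U$ of $\Lambda$ and a $\delta>0$ with the required tracing property.

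Fix $\varepsilon>0$ and $T>0$. Decreasing $\varepsilon$ is harmless, because strong $\varepsilon'$-tracing with $\varepsilon'\le\varepsilon$ implies strong $\varepsilon$-tracing, since $\mathrm{Rep}(\varepsilon')\subset\mathrm{Rep}(\varepsilon)$. So we may assume $\varepsilon<\eta/2$. Theorem~\ref{thm_regshad} then gives $\delta_1<\eta/2$ and $T_1>0$ such that every $\delta_1$-$T_1$-pseudo-orbit $P$ with $d(\mathcal P(P),\Sing(\phi))>\eta$ is strongly $\varepsilon$-traced. Take
\[
U:=\{x:d(x,\Lambda)<\eta/2\}.
\]
Every point of $U$ lies at distance greater than $\eta_\Lambda-\eta/2>\eta$ from $\Sing(\phi)$. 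Hence $\mathcal P(P)\subset U$ implies $d(\mathcal P(P),\Sing(\phi))\ge \eta_\Lambda-\eta/2>\eta$, and the theorem's distance hypothesis holds.

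It remains to match the time parameters, since the definition prescribes $T$ while the theorem produces $T_1$. If $T\ge T_1$, every $\delta$-$T$-pseudo-orbit is also a $\delta$-$T_1$-pseudo-orbit, and $\delta:=\delta_1$ works. If $T<T_1$, we need the theorem at the smaller lower time. Its proof only requires $T$ to be sufficiently small, and it uses oriented shadowing at that time via the equivalence of lower-time conventions (\cite{Ko}), so we may simply take $T_1\le T$ there. Alternatively, a $\delta$-$T$-pseudo-orbit can be regrouped into a $\delta'$-$T_1$-pseudo-orbit by concatenating consecutive blocks of total duration at least $T_1$, and this does not change $\mathcal P(P)$. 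Each block absorbs at most $\lceil T_1/T\rceil$ jumps, so by uniform continuity of $\phi$ on $[0,2T_1]$ the accumulated error is $\delta'$-small once $\delta$ is small. The regrouping changes the interpolated pseudo-trajectory only by amounts controlled by $\delta'$. Therefore strong $\varepsilon$-tracing of the regrouped pseudo-orbit gives strong $2\varepsilon$-tracing of the original, after replacing $\varepsilon$ by $\varepsilon/2$ at the start.

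The main obstacle is this bookkeeping between $T$ and $T_1$. The cleanest route is the first one: observe that Theorem~\ref{thm_regshad} holds for all sufficiently small $T$, which covers every prescribed $T$.
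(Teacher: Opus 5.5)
Your argument is correct. Your fallback (regrouping) is essentially the paper's proof, but your preferred route is different. The paper uses Theorem~\ref{thm_regshad} only through its statement. It takes $\eta=\tfrac14 d(\Lambda,\Sing(\phi))$, $U=B_\eta(\Lambda)$, and obtains $\delta_0,T'$ for accuracy $\varepsilon/2$. When $T<T'$, it merges blocks of $N$ consecutive steps with $NT\ge T'$. Uniform continuity on $[0,2NT']\times X$ then makes the merged pseudo-orbit $P'$ a $\delta_0$-$T'$-pseudo-orbit whose interpolation is $\varepsilon/2$-close to that of $P$. The larger buffer in $\eta$ is needed because $\mathcal P(P')$ only lies in $B_{\varepsilon/2}(\mathcal P(P))$.

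Your first route instead reopens the proof of Theorem~\ref{thm_regshad}. You observe that every smallness requirement on $T$ there is inherited by smaller values: $T<1/2$, $2T<T_0(K_1)$, $aT<T_0(K_2)$, and \eqref{eq:short-motion}. You also use that oriented shadowing is available at any lower time by \cite{Ko}. Hence the theorem holds for all sufficiently small $T$, and the case $T\ge T_1$ is trivial. This is shorter and avoids error propagation. However, it relies on the proof rather than the statement of the theorem, so you should say explicitly that you are proving and using this strengthened version.

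One correction to your second route: regrouping does change $\mathcal P(P)$. The merged segment $\phi_{[0,t'_j]}(x_{Nj})$ is a genuine orbit arc, not the concatenation of the original jumped pieces. Your own next sentence, that the interpolation changes by a controlled amount, contradicts the claim. What you actually get is $\mathcal P(P')\subset B_{\varepsilon/2}(\mathcal P(P))$. Your margin absorbs this, since every point of $U$ is at distance more than $3\eta/2$ from $\Sing(\phi)$. You must also first subdivide so that $t_i\in[T,2T)$. Otherwise block durations are unbounded, and uniform continuity on a fixed time interval does not control the accumulated error.
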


\begin{proof}
If $\Sing(\phi)=\emptyset$ take $\eta=1$ and otherwise put
	\[
	\eta=\tfrac14\,d\bigl(\Lambda,\Sing(\phi)\bigr)>0.
	\]
Denote $U=B_{\eta}(\Lambda)$.
It suffices to prove the statement assuming	$\varepsilon>0$ is sufficiently small, so without loss of generality fix any  $0<\varepsilon<\eta$ and
	$T>0$. Let $\delta_0>0$ and $T'>0$ be given by Theorem~\ref{thm_regshad}
	applied to $\eta$ and $\varepsilon/2$.
	
	Assume first $T\geq T'$. Every $\delta_0$-$T$-pseudo-orbit $P$ is a
	$\delta_0$-$T'$-pseudo-orbit, and if $\SP\subset U$ then
	$d(\SP(P),\Sing(\phi))\geq 4\eta-\eta>\eta$. Hence $U$ and $\delta_0$ ensure the
	local standard shadowing property for $\varepsilon$ and $T$ in the above caase.
	
	Assume now $T<T'$ and fix an integer $N$ with $NT\geq T'$. Given a
	$\delta$-$T$-pseudo-orbit $P=(x_i,t_i)_{i\in\Z}$, merge its steps in blocks of
	length $N$, that is, put
	\[
	P'=(x'_j,t'_j)_{j\in\Z},\qquad
	x'_j:=x_{Nj},\qquad t'_j:=\sum_{l=0}^{N-1}t_{Nj+l}\;\geq\;NT\geq T' .
	\]
	Since the total times are unchanged, the associated sequences satisfy
	$S'_j=S_{Nj}$ for every $j$, and $t'_j\le 2NT'$ because 
    by subdividing the orbit segments when necessary, we may assume without loss of
generality that
\[
t_i\in[T,2T)
\quad\text{for every }i\in\mathbb Z.
\]
	
	By uniform continuity of $(t,x)\mapsto\phi_t(x)$ on
	$[0,2NT']\times X$ there is $0<\delta\leq\delta_0$ such that,
	for every such $P$, every $j\in\Z$ and every $t\in[S'_j,S'_{j+1}]$ and $i$ such that  $Nj\leq i<N(j+1)$ and  $t\in[S_i,S_{i+1}]$, we have
	\[
	d\bigl(\phi_{t-S'_j}(x'_j),\phi_{t-S_i}(x_i)\bigr)\leq\frac{\varepsilon}{2}.
	\]
	This is obtained by taking $\delta$ sufficiently small and controlled propagation of at most
$N-1$  errors caused by $\delta$-jumps in pseudo-orbit before modification. In particular $P'$ is a $\delta_0$-$T'$-pseudo-orbit and
	$\SP(P')\subset B_{\varepsilon/2}(\SP(P))$.
	
	Suppose now $\SP(P)\subset U$. Then $\SP(P')\subset B_{\eta}(U)\subset B_{2\eta}(\Lambda)$,
	so $$d(\SP(P'),Sing(\phi))\geq 4\eta-2\eta>\eta$$ and Theorem~\ref{thm_regshad}
	provides $z\in X$ and $g\in\mathrm{Rep}(\varepsilon/2)$ with
	\[
	d\bigl(\phi_{t-S'_j}(x'_j),\phi_{g(t)}(z)\bigr)\leq\frac{\varepsilon}{2},
	\]
	for all $ j\in\Z$ and $ t\in[S'_j,S'_{j+1}]$.
	Combining the  above estimates and using
	$\mathrm{Rep}(\varepsilon/2)\subset\mathrm{Rep}(\varepsilon)$ we get
	$d\bigl(\phi_{t-S_i}(x_i),\phi_{g(t)}(z)\bigr)\leq\varepsilon$ for every $i\in\Z$
	and every $t\in[S_i,S_{i+1}]$.
	Thus $U$ and $\delta$ indeed ensure the local standard shadowing property for
	$\varepsilon$ and $T$, completing the proof.
\end{proof}

\section{Recurrence Versus Topological Entropy}\label{sec:classification}

In this section, we investigate the role of recurrence in the context of flows with shadowing. We are particularly interested in proving Theorems \ref{thm:positive entropy}, \ref{thm:standard}, and \ref{thm:surfacecharac}.  We will see that shadowing is a strong tool to obtain positive entropy, and the  possible types of recurrence allowing zero entropy flows with the standard shadowing property fall short in a simple classification.  To begin with, we introduce the terminology that will be predominant in all of our results hereafter. 

\begin{definition}
We say that $K$ is singularly minimal if every proper compact invariant subset of $K$ is contained in the set of singularities of $\phi$.
\end{definition}
We first observe that if a singularly minimal set is non-singular, then it is a classical minimal set. On the other hand, there are several examples of singular minimal sets that show that this concept is a proper extension of the concept of minimal set.

\begin{example}[Homoclinic Loops]
    A compact and invariant set $K$ is a homoclinic loop if there is a regular point $x\in K$ and $\sigma\in \operatorname{Sing}(\phi)$ such that $$K=\overline{\mathcal{O}(x)}=\mathcal{O}(x)\cup\{\sigma\}.$$ 
Homoclinic loops are clearly singularly minimal sets since their only compact and invariant sets are $K$ and $\{\sigma\}$. On the other hand, the presence of $\sigma$ implies that $K$ is not minimal.

\end{example}
 Besides, the previous trivial example, there are examples of singularly minimal sets with non-trivial dynamics. The next example presents a construction of singularities similar to one we performed in Section \ref{sec:examplesufaces}.

\begin{example}[Singular Suspensions]
Let $\phi:\hat M\to \hat M$ be a suspension flow of a given diffeomorphism $f: M\to M$. Let $K\subset  \hat M$ be a compact set and $\rho:
 \hat M\to [0,\infty) $ be a $C^{\infty}$-map such that $\rho(x)=0$ if and only if $x\in K$.  Let $V$ be the velocity vector field generated by $\phi$.  The singular suspension of $\phi$ with break function $\rho$ is the flow $\psi$ generated by the vector field $\rho V$. If one starts with a minimal diffeomorphism, then $\hat M$ is clearly a singularly minimal set for $\psi$. Although our example started with a diffeomorphism, one can also construct singular minimal suspensions starting with homeomorphisms of compact metric spaces (see for instance \cite{Ohno80}). In addition, one can construct singular minimal suspensions with zero, positive, or infinite entropy (see \cite{SYZ09,SZ11}).       
\end{example}

\begin{figure}[h!]
    \centering

    \begin{subfigure}[b]{0.48\textwidth}
        \centering
        \begin{tikzpicture}[scale=1.2]
   
    \draw[thick, smooth, tension=0.8] plot coordinates {
        (0,0) 
        (0.2, 1.5) 
        (1.5, 3) 
        (2.5, 2.5) 
        (2.3, 1) 
        (1.5, 0.8)
        (0,0)
    };
    
    \draw[thick, ->] (1.6, 0.84) -- (1.4, 0.75);
    
    \fill[OrangeRed] (0,0) circle (2.5pt);
    
    \node at (0.2, -0.4) {$\sigma$};
\end{tikzpicture}
        \caption{A homoclinic loop.}
        \label{fig:tikz}
    \end{subfigure}
    \hfill
    \begin{subfigure}[b]{0.48\textwidth}
        \centering
        \includegraphics[width=\linewidth]{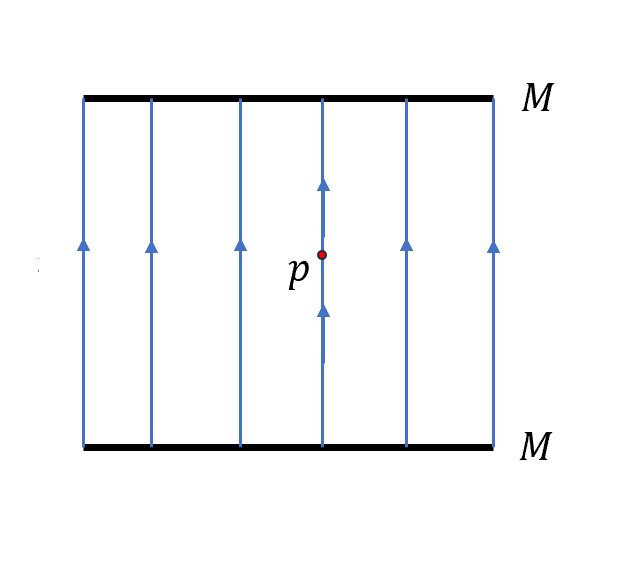}
        \caption{A singular suspension ($K=\{p\}$).}
        \label{fig:png}
    \end{subfigure}

\end{figure}

The next proposition gives  singular counter-part for a well known characterization of minimal sets. 
 
\begin{proposition}
Let $\phi$ be a continuous flow and let $K$ be a compact invariant set. 
 $K$ is singularly minimal if and only if every regular orbit contained in $K$ is dense in $K$.
\end{proposition}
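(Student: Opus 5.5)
Both implications follow from the standard facts that orbit closures are compact invariant sets and that compact invariant sets with a regular point contain that point's orbit closure.

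\emph{Forward direction.} Assume $K$ is singularly minimal and let $x\in K$ be a regular point. The set $\overline{\mathcal O(x)}$ is compact and invariant (closure of an invariant set under a continuous flow), and it lies in $K$ because $K$ is closed and invariant. It also contains the regular point $x$, so it is not contained in $\operatorname{Sing}(\phi)$. By singular minimality it cannot be a proper subset of $K$, hence $\overline{\mathcal O(x)}=K$ and the orbit is dense.

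\emph{Converse.} Assume every regular orbit in $K$ is dense in $K$. Let $L\subsetneq K$ be a compact invariant subset, and suppose towards a contradiction that $L\not\subset\operatorname{Sing}(\phi)$. Pick a regular point $y\in L$. Invariance and closedness of $L$ give $\overline{\mathcal O(y)}\subset L$. The hypothesis gives $\overline{\mathcal O(y)}=K$, so $K\subset L$, contradicting properness. Hence every proper compact invariant subset of $K$ consists of singularities, which is singular minimality.

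\emph{Degenerate case.} If $K\subset\operatorname{Sing}(\phi)$, the right-hand condition holds vacuously. Singular minimality also holds, since every proper compact invariant subset is then trivially singular. So the equivalence holds in this case too, and I would note it explicitly.

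There is no real obstacle here. The only points to check are that the closure of an orbit is invariant, which follows from continuity of each $\phi_t$, and that the definition of singular minimality quantifies only over proper subsets, so $L=K$ never causes trouble.
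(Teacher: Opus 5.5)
Your proof is correct and follows the paper's argument essentially line by line. In the forward direction, $\overline{\mathcal O(x)}$ is a compact invariant subset of $K$ not contained in $\operatorname{Sing}(\phi)$; in the converse, a proper compact invariant $L$ containing a regular point $y$ would contain $\overline{\mathcal O(y)}=K$, which is impossible. Your separate remark on the case $K\subset\operatorname{Sing}(\phi)$ is harmless but not needed, since both directions already cover it vacuously.
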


\begin{proof}
 Assume $K$ is singularly minimal.
Let $x$ be a regular point in $K$, i.e., $x \in K$ is not a singularity.  Clearly, $\overline{\mathcal{O}(x)}$ is a compact invariant subset of $K$. Since $x$ is regular,  $\overline{\mathcal{O}(x)}$ is not contained in $\operatorname{Sing}(\phi)$. By the singular minimality of $K$, $\overline{\mathcal{O}(x)} = K$.

 Assume every regular orbit in $K$ is dense in $K$. Let $L \subset K$ be a proper compact invariant subset. Suppose, for contradiction, that $L$ contains a regular point $y$. Then the orbit $\mathcal{O}(y)$ is contained in $L$ because $L$ is invariant.  We get $L = K$, contradicting the fact that $L$ is proper. Therefore, $L$ cannot contain any regular point. $L \subseteq \operatorname{Sing}(\phi).$
\end{proof}

Next, we introduce the concepts of heteroclinic  and almost heteroclinic sets that will be essential to proving the main results in this section.  Recall that orbit of a point $x$ is said  \emph{a heteroclinic connection} if  $\overline{\mathcal{O}(x)}$ is not a homoclinic loop and $$\alpha(x)\cup \omega(x)\subset \operatorname{Sing}(\phi).$$

\begin{definition}
A compact and invariant chain-transitive set $\Lambda$ is said to be:
\begin{itemize}
    \item a \textit{heteroclinic set} if the orbit of any regular point $x$ is a heteroclinic connection. 
    \item an \textit{almost heteroclinic set} if any of its non-dense regular orbits is a heteroclinic connection.

\end{itemize}
 Furthermore, if $\Lambda$ is an almost heteroclinic set and does not contain any non-trivial proper almost heteroclinic set, we say that $\Lambda$ is an \textit{irreducible almost heteroclinic set}.
\end{definition}

\begin{figure}[ht]
    \centering

    \begin{subfigure}[b]{0.48\textwidth}
        \centering
        \begin{tikzpicture}[scale=1.5]
    \node[circle, fill=red!80!black, inner sep=2pt] (A) at (0,0) {};
    \node[circle, fill=red!80!black, inner sep=2pt] (B) at (2,0) {};

    \node at (-0.3, -0.5) {$\sigma_1$};
    \node at (2.3, -0.4) {$\sigma_2$};

    \draw[thick] (A) to[bend left=45] node[above] {$\leftarrow$} (B);
    \draw[thick] (A) to[bend right=45] node[below] {$\rightarrow$} (B);
\end{tikzpicture}
       \caption{An irreducible almost heteroclinic set}
   
    \end{subfigure}
    \hfill
    \begin{subfigure}[b]{0.48\textwidth}
        \centering
        \begin{tikzpicture}[scale=1.0, every node/.style={inner sep=0pt, outer sep=0pt}]

    \coordinate (v1) at (0,0);
    \coordinate (v2) at (0.5,2);
    \coordinate (v3) at (3,3);
    \coordinate (v4) at (3.5,0.5);
    \coordinate (v5) at (2,1);

    \begin{scope}[
        decoration={
            markings,
            mark=at position 0.5 with {\arrow{stealth}}
        }
    ]
        \draw[thick, postaction={decorate}] (v1) -- (v2);
        \draw[thick, bend left=15, postaction={decorate}] (v2) -- (v3);
        \draw[thick, postaction={decorate}] (v3) -- (v5);
        \draw[thick, postaction={decorate}] (v5) -- (v1);
        \draw[thick, postaction={decorate}] (v5) -- (v4);
        \draw[thick, postaction={decorate}] (v4) -- (v3);
    \end{scope}

    \foreach \v in {v1, v2, v3, v4, v5} {
        \fill[OrangeRed] (\v) circle (2.5pt);
    }

    \node[below=5pt of v1] {$\sigma_1$};
    \node[above left=2pt of v2] {$\sigma_2$};
    \node[above right=2pt of v3] {$\sigma_3$};
    \node[below right=2pt of v4] {$\sigma_4$};
    \node[below=5pt of v5] {$\sigma_5$};

\end{tikzpicture}
   \caption{A reducible almost heteroclinic set}
           
    \end{subfigure}

\end{figure}

\begin{rmk}
    Observe that any non-trivial singularly minimal set is trivially almost heteroclitic, because every regular orbit is dense. Also, every almost heteroclinic set is heteroclinic, provided it contains no dense orbits. Finally, in any case the set is aperiodic, unless it is a single periodic orbit.    
\end{rmk}

\begin{lemma}\label{lemma:posentr}
    Let $\Lambda$ be a compact and invariant set with the local standard shadowing property.
    Suppose  there exists $\eta>0$ such
    that, for every $0<\delta<\eta/10$ and every $T>0$, the following
    conditions hold:
    \begin{enumerate}
        \item there are points $x,y\in X$ and positive numbers
        $t_0,T_x,T_y$ such that
        $$
        T<\min\{t_0,T_y\},\qquad t_0<T_x<10T_y,
        $$
        and
        $$
        d(\phi_{T_x}(x),x)\leq \frac{\delta}{2},\quad
        d(\phi_{T_y}(y),y)\leq \frac{\delta}{2},\quad
        d(x,y)\leq\frac{\delta}{2}.
        $$
        \item $\phi_{[0,T_x]}(x)\subset B_{\delta}(\Lambda)$ and $\phi_{[0,T_y]}(y)\subset B_{\delta}(\Lambda)$.
        \item $\phi_{t_0}(x)\notin B_{2\eta}(\phi_{[0,T_y]}(y))$.
    \end{enumerate}
    Then, $\phi$ has positive topological entropy.
\end{lemma}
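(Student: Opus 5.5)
We build a horseshoe-type family of orbits by shadowing binary concatenations of the two almost-returning blocks, and then count separated orbits. Fix $\varepsilon=\eta/4$ (also at most $\eta/10$) and, for this $\varepsilon$ and some $T_*>0$, let the local standard shadowing property provide a neighborhood $U$ of $\Lambda$ and $\delta_1>0$. Shrink $\delta<\min\{\delta_1,\eta/10\}$ so that $B_\delta(\Lambda)\subset U$, and take $T\ge T_*$ large. Hypotheses (1)--(2) then supply $x,y,t_0,T_x,T_y$.

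For each $s=(s_i)_{i\in\mathbb Z}\in\{0,1\}^{\mathbb Z}$, form the pseudo-orbit $P_s$ whose $i$-th block is the orbit segment $\phi_{[0,T_x]}(x)$ when $s_i=0$ and $\phi_{[0,T_y]}(y)$ when $s_i=1$. Split each block into steps of length in $[T,2T)$ if needed, adding zero jumps. Every junction has the form $\phi_{T_\bullet}(\bullet)\to\bullet'$ with $\bullet,\bullet'\in\{x,y\}$. By (1) and the triangle inequality these jumps have size at most $\delta$, and by (2) we have $\mathcal P(P_s)\subset B_\delta(\Lambda)\subset U$. So $P_s$ is strongly $\varepsilon$-traced by some $z_s$ with $h_s\in\mathrm{Rep}(\varepsilon)$. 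Only finite words are needed: for a word $w\in\{0,1\}^n$, take $P_w$ to be any bi-infinite extension, say padding with $1$'s.

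The key step, and the main obstacle, is showing that distinct words $w\ne w'$ yield $(\mathcal T_n,\eta/2)$-separated tracing points, where $\mathcal T_n\approx 10nT_y$. Let $k$ be the first index where the words differ, say $w_k=0$ and $w'_k=1$. Before block $k$ the two pseudo-orbits coincide in real time, and block $k$ starts at the same pseudo-time $S$ for both. At pseudo-time $S+t_0$, the pseudo-orbit $P_w$ sits at $\phi_{t_0}(x)$. Since $t_0<T_x$, this lies within block $k$, and by (3) it is at distance at least $2\eta$ from $\phi_{[0,T_y]}(y)$.

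The difficulty is that $P_{w'}$ at that pseudo-time may already have finished its $y$-block, because $t_0$ can exceed $T_y$. I would handle this by comparing images rather than times. Because $h_s\in\mathrm{Rep}(\varepsilon)$ and both pseudo-orbits agree up to time $S$, the tracing points are nearly synchronized at $S$. More precisely, I would argue directly: if $z_w$ and $z_{w'}$ stayed $\eta/2$-close on $[0,\mathcal T_n]$, then $\phi_{h_w(S+t_0)}(z_w)$ would be $\varepsilon$-close to $\phi_{t_0}(x)$. Meanwhile, $\phi_{h_{w'}(\cdot)}(z_{w'})$ over the window $[S,S+T_y]$ is $\varepsilon$-close to the $y$-segment. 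Using $|h_w(t)-t|,|h_{w'}(t)-t|\le\varepsilon t$ together with the resynchronization given by $h(0)=0$ is not enough on its own, since the distortion accumulates as $\varepsilon S$. This is the real obstacle.

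To avoid it, I would compare the time-$h_w(S)$ points rather than absolute times. I would count separation for the flow restricted to the family, using Bowen balls centered at the shifted points $\phi_{h_w(S)}(z_w)$. Equivalently, I would construct a factor of a return-type map onto a subshift with positive entropy. The constraint $t_0<T_x<10T_y$ ensures that each block has comparable length, so $n$ blocks take time at most $10nT_y(1+\varepsilon)$. If direct separation fails, the fallback is to use the bounded-distortion ratio $1\pm\varepsilon$ to show that the elapsed time in a $y$-block, at most $(1+\varepsilon)T_y$, cannot cover the stretch $t_0$ during which the $x$-orbit sits $2\eta$ away from the $y$-segment. In each case one obtains $2^n$ points that are $(\mathcal T_n,\eta/2)$-separated for $\phi_1$. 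This gives $h(\phi)\ge \log 2/(10(1+\varepsilon)T_y)>0$.
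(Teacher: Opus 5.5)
There is a genuine gap. Your setup agrees with the paper: the pseudo-orbits $P_s$, the jump estimates, and strong tracing with $h_s\in\mathrm{Rep}(\varepsilon)$. But the separation step, which is the whole content of the lemma, is not proved, and the remedies you sketch do not work.

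\begin{itemize}
\item The shifted points $\phi_{h_w(S)}(z_w)$ depend on the pair $(w,w')$ through $S$ and on the word through $h_w$. So they do not form a single separated set; entropy compares $\phi_t(a)$ and $\phi_t(b)$ at equal times from fixed initial points.
\item The factor onto a subshift is never constructed.
\item The fallback misreads hypothesis~(3). That hypothesis concerns only the single point $\phi_{t_0}(x)$, not a stretch of length $t_0$; indeed $\phi_0(x)=x$ is $\delta/2$-close to $y$.
\end{itemize}

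More fundamentally, the direct code $0\mapsto(x,T_x)$, $1\mapsto(y,T_y)$ cannot be decoded from the information available. Hypothesis (3) only marks phase $t_0$ of an $x$-block. Its partner in the other pseudo-orbit can only be located somewhere inside an $x$-block, an uncertainty of up to $T_x<10T_y$. For example, take $t_0=2T_y$ and $T_x=5T_y$. Then $P_{01}$ and $P_{10}$ (padded with $1$'s) are exact time-translates of each other by $T_y$. For small $\varepsilon$, each marker is sent into the interior of the other's $x$-block, so (3) never yields a contradiction.

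The paper supplies the following missing ideas.
\begin{itemize}
\item Compare two tracing orbits through $q=h_{w'}^{-1}\circ h_w\in\mathrm{Rep}(\beta)$, where $\beta=2\varepsilon/(1-\varepsilon)$. Apply the full difference-quotient bound of $\mathrm{Rep}$ to \emph{consecutive} markers, rather than $|h(t)-t|\le\varepsilon|t|$ measured from $0$. Then no error accumulates.
\item Hypothesis~(3) forces $q$ to send each marker time $a_j$ (phase $t_0$ of an $x$-block of $P_w$) into the interior of an $x$-block of $P_{w'}$. This works because in the coding below no two $x$-blocks are consecutive, so every transition is adjacent to a $y$-block.
\item Encode symbols by long macro-words $\mathcal B_0=0\,1^{100}$ and $\mathcal B_1=0\,1^{150}$, padding with $\mathcal B_0$. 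Consecutive marker gaps are then $D_0\in(100T_y,110T_y)$ or $D_1\in(150T_y,160T_y)$. These remain distinguishable after distortion $1\pm\beta$ and the $T_x$-uncertainty, and no $x$-block can be skipped.
\item Anchor the first marker in the initial $x$-block, using $q(t_0)<(1+\beta)T_x<100T_y$, and induct. This gives $w=w'$, so the $2^n$ tracing points are $(H_n,\eta)$-separated with $H_n=(1+\varepsilon)(nD_1+t_0)$.
\end{itemize}

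This also requires $\varepsilon$ to be small in absolute terms (the paper takes $\beta<1/100$), not merely $\varepsilon=\eta/4$. The same $\varepsilon$ bounds the time distortion.
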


\begin{proof}
Let $\Lambda$ be a compact invariant set with the local standard
shadowing property, and assume that the hypotheses of the lemma hold.

Choose
\[
0<10\varepsilon<\eta,
\qquad
\beta:=\frac{2\varepsilon}{1-\varepsilon}<\frac1{100},
\]
and fix $T>0$. Let $U$ and $\delta_0>0$ be supplied by the local
standard shadowing property for $\varepsilon$ and $T$. Choose $r>0$
such that $B_r(\Lambda)\subset U$, and then fix
\[
0<\delta<\min\left\{\delta_0,r,\frac{\varepsilon}{10}\right\}.
\]

Take $x,y\in S$ and
\[
T<t_0<T_x<10T_y
\]
satisfying the assumptions of the lemma. Notice that
every transition between the orbit segment of $x$ and the orbit
segment of $y$ produces a $\delta$-$T$ jump. Indeed,
\[
d(\phi_{T_x}(x),y)
\leq d(\phi_{T_x}(x),x)+d(x,y)\leq\delta,
\]
and similarly
\[
d(\phi_{T_y}(y),x)
\leq d(\phi_{T_y}(y),y)+d(y,x)\leq\delta.
\]

For every $s=(s_i)_{i\in\mathbb Z}\in\{0,1\}^{\mathbb Z}$, define
\[
P_s=(x_i^s,t_i^s)_{i\in\mathbb Z},
\qquad
(x_i^s,t_i^s)=
\begin{cases}
(x,T_x),&s_i=0,\\
(y,T_y),&s_i=1.
\end{cases}
\]
Then $P_s$ is a $\delta$-$T$-pseudo-orbit and, by item~(2),
\[
\mathcal P_s\subset B_\delta(\Lambda)\subset U.
\]
Consequently, every $P_s$ is strongly $\varepsilon$-traced by some
$y_s\in X$ with a reparametrization
$h_s\in\mathrm{Rep}(\varepsilon)$.

Let $M=100$, $R=50$,
and introduce the words
\[
\mathcal B_0:=0\,1^M,\qquad
\mathcal B_1:=0\,1^{M+R}.
\]
together with associated times given by
\[
D_0:=T_x+MT_y,\qquad
D_1:=T_x+(M+R)T_y.
\]
Since $T_x<10T_y$, we have
\begin{equation}\label{eq:macro-estimates}
100T_y<D_0<110T_y,\qquad
150T_y<D_1<160T_y.
\end{equation}

For $n\geq1$ and $w=(w_0,\ldots,w_{n-1})\in\{0,1\}^n$, let
$s(w)\in\{0,1\}^{\mathbb Z}$ be obtained by concatenating
\[
\mathcal B_{w_0}\mathcal B_{w_1}\cdots\mathcal B_{w_{n-1}}
\]
and completing the sequence in both directions with copies of
$\mathcal B_0$. Define
\[
z_w=y_{s(w)},\qquad
h_w=h_{s(w)}\in\mathrm{Rep}(\varepsilon).
\]

Put $R_0^w=0$, $a_0^w=t_0$ and for $j=1,\ldots,n$ denote
$$
R_j^w:=\sum_{i=0}^{j-1}D_{w_i}\quad(j\geq1),\qquad
a_j^w:=R_j^w+t_0.
$$
The time $a_j^w$ corresponds to $\phi_{t_0}(x)$ in the segment associated with $x$  at
the beginning of the $j$-th coding word. Consider the sequence of times and points
\[
H_n=(1+\varepsilon)(nD_1+t_0),\qquad
E_n=\{z_w:w\in\{0,1\}^n\}.
\]

We claim that $E_n$ is $(H_n,\eta)$-separated for the flow. Assume the contrary, and  let
 $w,v\in\{0,1\}^n$ be distinct words such that for every $0\leq t\leq H_n$ we have
\begin{equation}\label{eq:not-separated}
d(\phi_t(z_w),\phi_t(z_v))\leq\eta.
\end{equation}
Let $q=h_v^{-1}\circ h_w$ and observe that since $h_w,h_v\in\mathrm{Rep}(\varepsilon)$, we have that
$q\in\mathrm{Rep}(\beta)$. Fix $0\leq j\leq n$ and note that $h_w(a_j^w)\leq H_n$, therefore we may apply
\eqref{eq:not-separated} at time $h_w(a_j^w)$. If $q(a_j^w)$
belonged to a $y$-block of $P_{s(v)}$, with some phase $u\in[0,T_y]$, then
\[
\begin{aligned}
d(\phi_{t_0}(x),\phi_u(y))
&\leq d(\phi_{t_0}(x),\phi_{h_w(a_j^w)}(z_w))\\
&\quad+d(\phi_{h_w(a_j^w)}(z_w),
          \phi_{h_v(q(a_j^w))}(z_v))\\
&\quad+d(\phi_{h_v(q(a_j^w))}(z_v),\phi_u(y))\\
&\leq2\varepsilon+\eta<2\eta,
\end{aligned}
\]
contradicting item~(3). If \(q(a_j^w)\) were a transition time adjacent to a \(y\)-block,
we could apply the same estimate above, taking
\(u=0\) or \(u=T_y\), according as the transition is its initial or
terminal endpoint. Every transition time of \(P_{s(v)}\) is adjacent to a
\(y\)-block, since two \(x\)-blocks are never consecutive. Hence
\(q(a_j^w)\) lies in the interior of an \(x\)-block of \(P_{s(v)}\). 
Moreover,
\[
q(a_0^w)=q(t_0)<(1+\beta)T_x<10.1T_y,
\]
whereas the second $x$-block starts after time $100T_y$. Thus
$q(a_0^w)$ belongs to the initial $x$-block. Since
\[
a_{j+1}^w-a_j^w=D_{w_j},
\]
we have
\begin{equation}\label{eq:q-macro-gap}
(1-\beta)D_{w_j}
\leq q(a_{j+1}^w)-q(a_j^w)
\leq(1+\beta)D_{w_j}.
\end{equation}
The two images cannot belong to the same $x$-block, because their
difference would be at most $T_x<10T_y$, whereas
$(1-\beta)D_0>99T_y$. They cannot skip an $x$-block, because then
their difference would be greater than
\[
2D_0-T_x>190T_y,
\]
whereas $(1+\beta)D_1<161.6T_y$. Hence they belong to consecutive
$x$-blocks.

The intervening block $\mathcal B_{w}$ is determined uniquely. If $w_j=0$ but
the corresponding block of $P_{s(v)}$ has type $1$, then
\[
q(a_{j+1}^w)-q(a_j^w)\leq(1+\beta)D_0<111.1T_y,
\]
whereas its value must be at least $D_1-T_x=150T_y$. Conversely, if
$w_j=1$ but the corresponding block of $P_{s(v)}$ has type $0$, then
\[
q(a_{j+1}^w)-q(a_j^w)\geq(1-\beta)D_1>148.5T_y,
\]
whereas its value must be at most $D_0+T_x<120T_y$. Both alternatives
are impossible. Induction therefore gives
$
w_j=v_j$ for all $j=0,\ldots,n-1$,
contrary to $w\neq v$. Hence $E_n$ is $(H_n,\eta)$-separated and
$|E_n|=2^n$.

By uniform continuity of the flow on $[0,1]\times X$, there exists
$\eta_0>0$ such that
\[
d(u,v)\leq\eta_0
\quad\Longrightarrow\quad
d(\phi_s(u),\phi_s(v))\leq\eta
\qquad  \text{for all }0\leq s\leq1.
\]
Thus $E_n$ is an $(\lceil H_n\rceil+1,\eta_0)$-separated set for
$\phi_1$, and
\[
\begin{aligned}
h_{\mathrm{top}}(\phi)=h_{\mathrm{top}}(\phi_1)
&\geq\limsup_{n\to\infty}
\frac{\log|E_n|}{\lceil H_n\rceil+1}\\
&=\frac{\log2}{(1+\varepsilon)D_1}>0.
\end{aligned}
\]
This completes the proof.
\end{proof}

\begin{proof}[Proof of Theorem \ref{thm:positive entropy}]
Let $\Lambda$ be a non-trivial chain-transitive set with the local
standard shadowing property. Assume that $\Lambda$ is not an
irreducible almost heteroclinic set. The proof is by reduction to 
Lemma~\ref{lemma:posentr}.

Suppose first that $\Lambda$ is not almost heteroclinic. By definition,
there exists a regular point $p\in\Lambda$ whose orbit is not dense in
$\Lambda$ and is not a heteroclinic connection. Fix such a point $p$.
Then exactly one of the following alternatives holds:
\begin{itemize}
	\item[(i)]
	$
	(\alpha(p)\cup\omega(p))
	\setminus\operatorname{Sing}(\phi)\neq\emptyset;
	$
	\item[(ii)] $\overline{\mathcal O(p)}$ is a homoclinic loop.
\end{itemize}
If, on the other hand, $\Lambda$ is almost heteroclinic, then it is not 
irreducible, and hence:
\begin{itemize}
	\item[(iii)] $\Lambda$ contains a proper non-trivial almost
	heteroclinic subset.
\end{itemize}
 In case~(i), let $\widetilde\Lambda$ be one of the limit sets
$\alpha(p)$ or $\omega(p)$ containing a regular point. In case~(ii),
put $\widetilde\Lambda:=\overline{\mathcal O(p)}$. In case~(iii), let
$\widetilde\Lambda\subsetneq\Lambda$ be a non-trivial proper almost
heteroclinic set. In every case $\widetilde\Lambda$ is compact,
invariant, chain transitive and proper subset of $\Lambda$, and contains a
regular point.

Choose a regular point $z\in\widetilde\Lambda$ and a point
$q\in\Lambda\setminus\widetilde\Lambda$, and define $\chi=d(q,\widetilde\Lambda)>0.$
Fix any $\eta>0$ such that $10\eta<\chi$ and
let $0<\delta<\eta/10$ and $T>0$ be arbitrary.
 Choose
\[
0<\rho<\min\left\{\frac12,\frac{\delta}{4},
                    \frac{\chi-2\eta}{2}\right\}
\quad\text{and}\quad
\widehat T>\frac{T}{1-\rho}.
\]
Apply the definition of local standard shadowing property with accuracy $\rho$ and
minimum step length $\widehat T$. Let $U$ be the resulting
neighborhood of $\Lambda$ and let $\gamma_0>0$ be the corresponding
jump size. Fix $0<\gamma\leq\gamma_0$.

Since $z$ is regular, choose $z'\in\widetilde\Lambda$ with $z'\neq z$.
By chain transitivity of $\widetilde\Lambda$, concatenate a
$\gamma$-$\widehat T$ chain from $z$ to $z'$ with one from $z'$ to
$z$. This gives a non-trivial closed $\gamma$-$\widehat T$ chain
based at $z$. Denote its total pseudo-time by $L_y$ and repeat it
periodically in both directions. Its orbit segments are contained in
$\widetilde\Lambda\subset\Lambda\subset U$; hence it is strongly
$\rho$-traced by a point $y$ with some
$h_y\in\mathrm{Rep}(\rho)$.
 At every cumulative time $kL_y$, $k\in\Z$, the periodic
chain is again at $z$. Consequently,
\begin{equation}\label{eq:theoremB4-y-return}
d(y,z)\leq\rho,
\qquad
d\bigl(\phi_{h_y(kL_y)}(y),z\bigr)\leq\rho,
\end{equation}
and
\begin{equation}\label{eq:theoremB4-y-near}
d(\phi_s(y),\widetilde\Lambda)\leq\rho
\qquad\text{for every }s\in[0,h_y(kL_y)].
\end{equation}

Next, use the chain transitivity of $\Lambda$ to concatenate a
$\gamma$-$\widehat T$ chain from $z$ to $q$ with one from $q$ back to
$z$. Repeat the resulting closed chain periodically in both
directions. Let $L_x$ be its period and let
$A\in(0,L_x)$ be the smallest cumulative time at which pseudo-orbit reaches $q$. Same as before, also this
periodic pseudo-orbit is contained in $\Lambda$, and so it is strongly
$\rho$-traced by a point $x$ with some
$h_x\in\mathrm{Rep}(\rho)$.
 Define
\[
t_0=h_x(A),\qquad T_x=h_x(L_x),
\]
and observe that by the definition:
\begin{equation}\label{eq:theoremB4-x-estimates}
d(x,z)\leq\rho,
\qquad
d(\phi_{T_x}(x),z)\leq\rho,
\qquad
d(\phi_{t_0}(x),q)\leq\rho,
\end{equation}
and
\begin{equation}\label{eq:theoremB4-x-near}
d(\phi_s(x),\Lambda)\leq\rho
\qquad\text{for every }s\in[0,T_x].
\end{equation}
Since $A\geq\widehat T$ and $h_x\in\mathrm{Rep}(\rho)$, we have
\[
t_0=h_x(A)\geq(1-\rho)A
\geq(1-\rho)\widehat T>T,
\]
and $t_0<T_x$. Clearly $h_y(kL_y)$ is unbounded, so there is $k\geq1$ such that if we put
$T_y=h_y(kL_y)$ then $T_y>T$ and $T_x<10T_y$.
Equations~\eqref{eq:theoremB4-y-return} and
\eqref{eq:theoremB4-x-estimates} give
\[
d(\phi_{T_x}(x),x)\leq2\rho<\frac{\delta}{2},
\qquad
d(\phi_{T_y}(y),y)\leq2\rho<\frac{\delta}{2},
\]
and
\[
d(x,y)\leq d(x,z)+d(z,y)\leq2\rho<\frac{\delta}{2}.
\]
Furthermore, by \eqref{eq:theoremB4-y-near} and
\eqref{eq:theoremB4-x-near},
\[
\phi_{[0,T_x]}(x)\subset B_\delta(\Lambda),
\qquad
\phi_{[0,T_y]}(y)\subset B_\delta(\Lambda).
\]

 Finally, for every $s\in[0,T_y]$, we have
\[
\begin{aligned}
d(\phi_{t_0}(x),\phi_s(y))
&\geq d(q,\widetilde\Lambda)
      -d(\phi_{t_0}(x),q)
      -d(\phi_s(y),\widetilde\Lambda)\\
&\geq\chi-2\rho>2\eta
\end{aligned}
\]
which in particular implies that
\[
\phi_{t_0}(x)\notin
B_{2\eta}\bigl(\phi_{[0,T_y]}(y)\bigr).
\]
All the hypotheses of Lemma~\ref{lemma:posentr} are satisfied, and
therefore $h_{\mathrm{top}}(\phi)>0$. The proof is completed.
\end{proof}

We are now in a position of proving Theorem \ref{thm:standard}.

\begin{proof}[Proof of Theorem~\ref{thm:standard}]
	Let $\Lambda$ be a non-trivial chain-recurrent class of $\phi$.
	Then $\Lambda$ is a compact, invariant, and chain-transitive set.
	The standard shadowing property of $\phi$ implies that
	$\Lambda$ has the local standard shadowing property.
	If $\Lambda$ were not an irreducible almost heteroclinic set,
	Theorem~\ref{thm:positive entropy} would imply that
	$h_{\mathrm{top}}(\phi)>0$,
	contrary to the hypothesis. Consequently, every non-trivial
	chain-recurrent class of $\phi$ is an irreducible almost heteroclinic
	set.
	
	Suppose now that $\Lambda$ is non-singular. We claim that it is minimal.
	Indeed, if $x\in\Lambda$ had a non-dense orbit in $\Lambda$, then, since
	$\Lambda$ is almost heteroclinic and $x$ is regular, the orbit of $x$
	would be a heteroclinic connection, that is
	$$
    \alpha(x)\cup\omega(x)\subset\Sing(\phi).
	$$
	On the other hand, compactness and invariance of $\Lambda$ give
	$\alpha(x)\cup\omega(x)\subset\Lambda$.
	Since both limit sets are nonempty and
	$\Lambda\cap\Sing(\phi)=\emptyset$, this is impossible. Thus every orbit
	in $\Lambda$ is dense in $\Lambda$, and therefore $\Lambda$ is minimal, proving the claim.
	
	Finally, if $\phi$ is regular, every chain-recurrent class is
	non-singular. The non-trivial classes are minimal by the preceding
	argument, while every trivial class, being a compact single orbit, is
	minimal as well. Since the chain-recurrent classes partition
	the chain-recurrent set, it must be a disjoint union of minimal
	sets.
\end{proof}

Finally, we end this work by providing  a proof for Theorem \ref{thm:surfacecharac}.

 \begin{proof}[Proof of Theorem~\ref{thm:surfacecharac}]
  Every continuous flow on a compact surface has zero topological
	entropy \cite{LSY}. Let $H$ be a non-singular, non-trivial chain-recurrent class.
By Corollary~\ref{cor:locshad}, $H$ has the local standard shadowing
property, and hence Theorem~\ref{thm:positive entropy} implies that
$H$ is an irreducible almost heteroclinic set. 
If $x\in H$ had a non-dense orbit in $H$, then the orbit of
$x$ would be a heteroclinic connection by definition. Hence
$
\alpha(x)\cup\omega(x)\subset\Sing(\phi)\cap H=\emptyset
$
which is a contradiction.
Thus every orbit in $H$ is dense, and
therefore $H$ is minimal.

It remains to prove that
\[
H(\sigma)=\{\sigma\}
\qquad\text{for every }\sigma\in\Sing(\phi).
\]
Suppose, by contradiction, that there exists $\sigma\in\Sing(\phi)$
such that $H(\sigma)\neq\{\sigma\}$. Since $\Sing(\phi)$ is finite,
 the class $H(\sigma)$ contains a regular point
$p\neq\sigma$. Choose a Jordan domain $U$ such that
\[
\sigma\in U,\qquad
 p\notin\overline U,
\qquad\text{and}\qquad
\overline U\cap\Sing(\phi)=\{\sigma\}.
\]

We first record an important consequence of oriented shadowing.
Suppose that $C\subset U$ is an invariant Jordan curve, its interior
contains a point $q\in H(\sigma)$, and $p$ lies in its exterior. 

\textbf{Barrier claim.} \textit{We claim that such a curve $C$ cannot exist.}
Indeed, choose
\[
0<\varepsilon_C<\frac13
\min\{d(q,C),d(p,C)\}
\]
and let $\delta_C>0$ be given by the oriented shadowing property.
Since $q,p\in H(\sigma)$, concatenate a $\delta_C$-chain from $q$ to
$p$ with one from $p$ to $q$ and repeat the resulting closed chain in
both directions. Any orbit which $\varepsilon_C$-traces this
pseudo-orbit must pass from the interior of $C$ to its exterior and
hence must intersect $C$. Since $C$ is invariant, such an orbit is
contained in $C$, contrary to the choice of $\varepsilon_C$. This
proves the claim.

We shall also use the claim for homoclinic loops contained in $U$.
Suppose that
\[
C=\mathcal O(z)\cup\{\sigma\}\subset U
\]
is such a loop, and denote its interior by $D$. Then $D\subset
H(\sigma)$. To see this, fix $x\in D$. By the Poincar\'e--Bendixson theorem
\cite[Section~10.5]{HSD13}, a nonempty limit set of $x$ is either a
periodic orbit or a singular graph (including singleton $\{\sigma\}$). The first possibility is excluded because
every periodic orbit in $D$ surrounds a singularity, whereas
$D\cap\Sing(\phi)=\emptyset$. Consequently, the limit
sets of $\alpha(x)$, $\omega(x)$ are contained in singular graphs based at $\sigma$, whose
points belong to $H(\sigma)$. Following the orbit of $x$ from a point
close to its $\alpha$-limit set and towards its $\omega$-limit set shows
that $x$ is chain-related to $\sigma$. Thus $x\in H(\sigma)$. Taking
any $q\in D$ in the barrier claim shows that no homoclinic loop
contained in $U$ can exist.

Choose $\varepsilon>0$ sufficiently small, so that
\[
\overline{B_{\varepsilon}(\sigma)}\subset U
\qquad\text{and}\qquad
 B_{\varepsilon}(p)\cap\overline U=\emptyset.
\]
 Let $\delta>0$ be given by the oriented shadowing property for
$\varepsilon$.
Since $p\in H(\sigma)$, there are a $\delta$-chain from $\sigma$ to
$p$ and a $\delta$-chain from $p$ to $\sigma$. Concatenate them and
extend the resulting pseudo-orbit in both directions by the constant
pseudo-orbit at $\sigma$. Thus there is a $\delta$-pseudo-orbit
$(x_i,t_i)_{i\in\mathbb Z}$ such that $(x_i,t_i)=(\sigma,t_i)$ in both
tails and $x_{i_1}=p$ for some $i_1$.

There exist $h\in\mathrm{Rep}$ and $y_1\in M$ such that
\[
d\bigl(\phi_{t-S_i}(x_i),\phi_{h(t)}(y_1)\bigr)
\leq\varepsilon
\]
for every $i\in\mathbb Z$ and every $t\in[S_i,S_{i+1}]$. Hence
\[
\alpha(y_1)\cup\omega(y_1)
\subset\overline{B_{\varepsilon}(\sigma)}\subset U,
\]
and the orbit of $y_1$ meets $B_{\varepsilon}(p)$.

 Applying the Poincar\'e--Bendixson theorem again, each of the limit sets of $y_1$ is either
$\{\sigma\}$, a periodic orbit, or a nondegenerate singular graph. If a periodic
orbit $\gamma\subset U$ occurs, then it is a Jordan curve and its
interior contains a singularity by
\cite[Corollary~4 in Section~10.6]{HSD13}. Hence its interior contains
$\sigma$, and the barrier claim, with $q=\sigma$, excludes $\gamma$.
If a non-trivial singular graph occurs, then, since $\sigma$ is the
only singularity in $U$, it contains a homoclinic loop based at
$\sigma$, which was also excluded above. Therefore
\[
\alpha(y_1)=\omega(y_1)=\{\sigma\}.
\]
In particular, $y_1\in H(\sigma)$, and its orbit leaves $U$ because it
meets $B_{\varepsilon}(p)$.

We next produce two further homoclinic orbits. Choose a regular point
of $\mathcal O(y_1)\setminus\overline U$ and a flow box $W$ around it
whose closure meets $\mathcal O(y_1)$ in exactly one orbit arc. Fix a
smaller central flow box $W_0$ with $\overline{W_0}\subset W$. Choose
$\varepsilon_1>0$ sufficiently small that an
$\varepsilon_1$-tracing orbit enters $W_0$ only while tracing this
distinguished arc, and let $\delta_1>0$ be the corresponding oriented
shadowing constant. Since
$\alpha(y_1)=\omega(y_1)=\{\sigma\}$, choose $t^-<0<t^+$, with
$t^+-t^->1$, so that
\[
d(\phi_t(y_1),\sigma)<\frac{\delta_1}{2}
\qquad\text{for every }t\notin[t^-,t^+],
\]
and so that $\phi_{[t^-,t^+]}(y_1)$ crosses $W_0$ exactly once.

Form two pseudo-orbits which are constant at $\sigma$ in both tails
and follow the arc $\phi_{[t^-,t^+]}(y_1)$ twice and three times,
respectively. All their jumps are smaller than $\delta_1$ by definition, so there are points $y_2$
and $y_3$ which $\varepsilon_1$-trace them, respectively. Their alpha- and
omega-limit sets are contained in $U$, so the preceding
Poincar\'e--Bendixson and barrier argument gives
\[
\alpha(y_i)=\omega(y_i)=\{\sigma\},
\qquad i=2,3.
\]
The orbits of $y_1,y_2,y_3$ cross $W_0$ once, twice, and three times,
respectively. They are therefore pairwise distinct.

Choose a smaller Jordan domain $V$ containing $\sigma$, adapted to
the embedded six-star formed by the incoming and outgoing tails, such
that, for each $i=1,2,3$, the intersection $\mathcal O(y_i)\cap V$
consists of exactly two connected tails. Let $a_i<b_i$ be the exit and entrance
times of these tails through $\partial V$, and put
\[
A_i:=\phi_{(-\infty,a_i]}(y_i)\cup\{\sigma\},
\qquad
B_i:=\phi_{[b_i,\infty)}(y_i)\cup\{\sigma\}.
\]
The six arcs are pairwise disjoint except for their common endpoint
$\sigma$. For every $i,j$, the set
\[
Q_{i,j}:=A_i\cup B_j\cup\partial V
\]
is a $\Theta$-curve. Hence $V\setminus Q_{i,j}$ has two connected
components, each homeomorphic to an open disc.

\begin{figure}[t]
	\centering
	\begin{tikzpicture}[
		scale=1.05,
		every node/.style={font=\small},
		flow/.style={
			line width=.9pt,
			postaction={decorate},
			decoration={markings,
				mark=at position .58 with {\arrow{Stealth[length=2.1mm]}}
			}
		},
		Abranch/.style={flow,blue!65!black},
		Bbranch/.style={flow,orange!80!black},
		barrier/.style={flow,line width=1.5pt,black},
		jump/.style={
			densely dashed,violet!80!black,
			-{Stealth[length=2mm]},line width=.8pt
		}
		]
		
		\def\R{3}
		\coordinate (S)  at (0,0);
		\coordinate (A1) at (150:\R);
		\coordinate (A2) at (90:\R);
		\coordinate (B2) at (30:\R);
		\coordinate (B1) at (-30:\R);
		\coordinate (A3) at (-90:\R);
		\coordinate (B3) at (-150:\R);
		
		\fill[blue!7]
		(S)--(A1)
		arc[start angle=150,end angle=-30,radius=\R]--cycle;
		\fill[orange!9]
		(S)--(B1)
		arc[start angle=-30,end angle=-210,radius=\R]--cycle;
		
		\draw[line width=1.35pt] (S) circle[radius=\R];
		\draw[barrier] (S)--(A1)
		node[pos=.73,above left=-40pt] {$A_3$};
		\draw[barrier] (B1)--(S)
		node[pos=.28,below right=-40pt] {$B_1$};
		
		\draw[Abranch] (S)--(A2)
		node[pos=.78,above left=-30pt] {$A_2$};
		\draw[Bbranch] (B2)--(S)
		node[pos=.22,above right=-45pt] {$B_2$};
		\draw[Abranch] (S)--(A3)
		node[pos=.78,right] {$A_1$};
		\draw[Bbranch] (B3)--(S)
		node[pos=.22,above left=-20pt] {$B_3$};
		
		\fill (90:1.22) circle (1.5pt)
		node[left=2pt] {$u$};
		\fill (-150:1.72) circle (1.5pt)
		node[below right=1pt] {$v$};
		
		\draw[jump] (-150:.42)--(90:.42);
		
		\fill (S) circle (2pt)
		node[above right=1pt] {$\sigma$};
		
		\node at (1.35,1.05) {$C_+$};
		\node at (-1.35,-0.5) {$C_-$};
		\node[fill=white,inner sep=1.5pt] at (19:3.32)
		{$\partial V$};
		
	\end{tikzpicture}
	\caption{The local configuration inside $V$, after relabelling.
		The $\Theta$-curve 
		$Q_{3,1}=A_3\cup B_1\cup\partial V$  separate $V$ into the
		components $C_+$ and $C_-$. The points $u\in A_2$ and $v\in B_3$
		belong to different components. Arrows indicate the direction of
		the flow, while the dashed arrow represents the pseudo-orbit jump
		near $\sigma$.}
	\label{fig:six-star}
\end{figure}

 Intersect the six arcs with the boundary of a sufficiently
small disk around $\sigma$ and consider their cyclic order. One can
choose an outgoing arc $A_i$ and an incoming arc $B_j$ so that each of
the two complementary boundary intervals contains at least one of the
remaining outgoing arcs and at least one of the remaining incoming
arcs. Consequently, there are $k\neq i$ and $s\neq j$ such that
$A_k\setminus\{\sigma\}$ and $B_s\setminus\{\sigma\}$ lie in distinct
components of $V\setminus Q_{i,j}$.

Choose points $u\in A_k\setminus\{\sigma\}$ and
$v\in B_s\setminus\{\sigma\}$ in these two components. Fix a tracing
accuracy $\varepsilon_2>0$ so small that the
$\varepsilon_2$-neighborhoods of $u$ and $v$ remain in their
respective components and the compact pieces of $A_k$ and $B_s$
between these points and $\sigma$ have their
$\varepsilon_2$-neighborhoods contained in $V$. Let $\delta_2>0$ be
the corresponding oriented shadowing constant.

Follow the orbit of $y_s$ along its incoming branch until a point
sufficiently close to $\sigma$, make a jump of size less than
$\delta_2$ to a sufficiently far negative point of the outgoing
branch of $y_k$, and then follow the orbit of $y_k$. Subdividing the
two orbit pieces into steps of length at least one gives a
$\delta_2$-pseudo-orbit. Any orbit which $\varepsilon_2$-traces it
must pass inside $V$ from the component containing $v$ to the
component containing $u$, and therefore must meet $Q_{i,j}$. It cannot
meet $\partial V$ by the choice of $\varepsilon_2$, cannot pass
through the fixed point $\sigma$, and cannot meet $A_i$ or $B_j$.
Indeed, uniqueness of flow orbits would then make it an orbit of $y_i$
or $y_j$. After crossing the outgoing tail $A_i$, the orbit of $y_i$
leaves $V$ before returning through $B_i$ and similarly, before crossing
the incoming tail $B_j$, the orbit of $y_j$ lies outside $V$. Either
possibility contradicts the fact that the whole tracing segment
between the visits near $v$ and $u$ remains in $V$. This contradiction proves that
$H(\sigma)=\{\sigma\}$.

Thus every singular chain-recurrent class is a singleton, while every
non-singular chain-recurrent class is minimal. Therefore every
chain-recurrent class of $\phi$ is minimal.
  \end{proof}

\section{Questions and further directions}\label{sec:questions}
We conclude this work by outlining a number of questions that we believe constitute natural directions for future research.

 Theorem~\ref{thm:denseflows} gives a dense set of surface vector fields with the oriented shadowing property, but lacking the standard shadowing property. Murakami established the equivalence between the two shadowing notions under the assumption that the nonwandering set contains only finitely many critical elements \cite{Mu1}. Moreover, Theorem~\ref{thm:surfacecharac} shows that, for surface flows with oriented shadowing, the mere finiteness of $\Sing(\phi)$ already forces every chain-recurrent class to be minimal. This suggests that such a finiteness condition might be sufficient for a full equivalence, prompting the following question.

\begin{question}
Let $\phi$ be a continuous flow on a compact boundaryless surface with $\Sing(\phi)$ finite. Is it true that $\phi$ has the oriented shadowing property if and only if it has the standard shadowing property.
\end{question}

The proof of Theorem~\ref{thm:denseflows} relies crucially on the classical $C^r$-density of Morse--Smale vector fields on orientable surfaces for every $r \ge 1$ \cite[Theorem~2.6]{Palis}, together with the structural stability of such flows, which in particular implies standard shadowing. This density result, however, is known to fail in higher dimensions. Consequently, establishing an analogue of Theorem~\ref{thm:denseflows} in dimension at least three would require genuinely new mechanisms, as formalised in the following problem.

\begin{question}
Is it possible to prove an analogue of Theorem~\ref{thm:denseflows} for manifolds of dimension at least three?
\end{question}

It is worth noting that Theorem~\ref{thm:positive entropy} only invokes standard shadowing in a neighbourhood of the invariant set $\Lambda$. By Corollary~\ref{cor:locshad}, this hypothesis is automatically satisfied for compact invariant sets that avoid singularities. Thus, the only obstruction arises for chain-transitive sets that contain singularities, precisely the setting where Theorem~\ref{thm:denseflows} shows that the two shadowing notions may differ.

\begin{question}\label{q:local-oriented}
Does Theorem~\ref{thm:positive entropy} remain valid under the local \emph{oriented} shadowing property?
\end{question}

Homoclinic loops and singular suspensions are examples of irreducible almost heteroclinic sets, and they may carry zero, positive, or even infinite entropy \cites{SYZ09,SZ11}. This raises the natural question of whether a more refined characterisation of such sets can be obtained within the class of flows having the standard shadowing property.

\begin{question}\label{q:realization}
Which irreducible almost heteroclinic sets can arise as chain-recurrent classes of a flow with the standard shadowing property? In particular, is it true that every such class of a zero-entropy flow is either minimal or singularly minimal?
\end{question}

\section*{Acknowledgements}
This work was partially supported by the project No.~CZ.02.01.01/00/23\_021/0008759 supported by EU funds through the Operational Programme Johannes Amos Comenius.
Part of this project was conducted while Sakshi Jain was affiliated with Monash University whose support is gratefully acknowledged.
\begin{table}[h]
\begin{tabularx}{\linewidth}{p{1.5cm}  X}
\includegraphics [width=1.8cm]{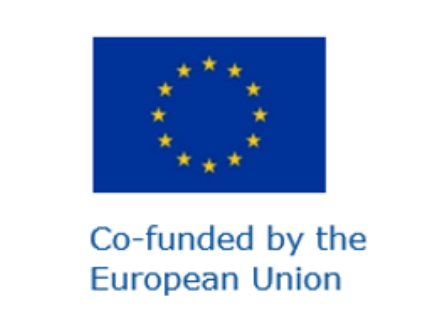} &
\vspace{-1.5cm}

 This research is part of a project that has received funding from
the European Union's European Research Council Marie Sklodowska-Curie Project No. 101151716 -- TMSHADS -- HORIZON--MSCA--2023--PF--01. For the purpose of open access, and in fulfilment of the obligations arising from the grant agreement, the author has applied a Creative Commons Attribution 4.0 International (CC BY 4.0) license to any Author Accepted Manuscript version arising from
this submission.
 \\
\end{tabularx}
\end{table}

\nocite{*}
\bibliographystyle{amsplain}
 \bibliography{biblio}
\end{document}